\newtheorem{theorem}{Theorem}[section]
\newtheorem{lemma}[theorem]{Lemma}
\newtheorem{corollary}[theorem]{Corollary}
\newtheorem{definition}[theorem]{Definition}
\newtheorem{proposition}[theorem]{Proposition}
\newtheorem{remark}[theorem]{Remark}
\numberwithin{equation}{section} 
\newcommand{\norm}[1]{\left\|#1\right\|}
\newcommand{\abs}[1]{\left|#1\right|}
\newcommand{\T}{\ensuremath{\mathbb{T}}}
\newcommand*{\R}{\ensuremath{\mathbb{R}}}
\newcommand*{\N}{\ensuremath{\mathbb{N}}}
\newcommand{\eps}{\varepsilon}
\newcommand{\quotes}[1]{``#1''}
\renewcommand{\MR}[1]{} 
\def\div{\mathop{\rm div}\nolimits}    
\def\curl{\mathop{\rm curl}\nolimits}    
\def\spt{\mathop{\rm Spt}\nolimits} 
\def\Lip{\mathop{\rm Lip}\nolimits}
\newcommand{\be}{\begin{equation}}
\newcommand{\ee}{\end{equation}}
\title{Dissipation concentration in two-dimensional fluids}
\author[L. De Rosa]{Luigi De Rosa}
\address[L. De Rosa]{Gran Sasso Science Institute, viale Francesco Crispi, 7, 67100 L’Aquila, Italy}
\email{luigi.derosa@gssi.it}
\author[J. Park]{Jaemin Park}
\address[J. Park ]{Department of Mathematics,
Yonsei university
50 Yonsei-Ro, Seodaemun-Gu, 03722 Seoul, South Korea}
\email{jpark776@yonsei.ac.kr}
\date{\today}
\subjclass[2020]{76D05 -- 35D30 -- 76F02 -- 28C05.}
\keywords{Incompressible fluids -- vanishing viscosity -- dissipation -- concentration compactness.}
\thanks{\textit{Acknowledgments} JP was partially supported by SNSF Ambizione
fellowship project PZ00P2-216083, the Yonsei University Research Fund of 2024-22-0500, and the
POSCO Science Fellowship of POSCO TJ Park Foundation.}
\begin{document}

\begin{abstract}
We study the dissipation measure arising in the inviscid limit of two-dimensional incompressible fluids. It is proved that the dissipation is Lebesgue in time and, for almost every time, it is absolutely continuous with respect to the defect measure of strong compactness of the solutions. When the initial vorticity is a measure, the dissipation is proved to be absolutely continuous with respect to a ``quadratic'' space-time vorticity measure. This results into the trivial measure if the initial vorticity has singular part of distinguished sign, or a spatially purely atomic measure if wild oscillations in time are ruled out. In fact,  the dynamics at the Batchelor--Kraichnan dissipative scale is the only relevant one, in turn offering new criteria for anomalous dissipation. We provide  kinematic examples highlighting the strengths and the limitations of our approach. Quantitative rates, dissipation life-span and steady fluids are also investigated. 
\end{abstract}

\maketitle

\section{Introduction}
We consider the two-dimensional Navier--Stokes equations
\begin{equation}\label{NS} \tag{NS}
\left\{\begin{array}{ll}
\partial_t u^\nu +\div (u^\nu \otimes u^\nu) +\nabla p^\nu=\nu \Delta u^\nu \\
\div u^\nu=0 \\
u^\nu(\cdot, 0)=u^\nu_0
\end{array}\right.
\end{equation}
on $\T^2 \times [0,T)$. We are interested in the behavior as $\nu\rightarrow 0$, where phenomena related to turbulence happen. For any $\nu>0$ and any $u^\nu_0 \in L^2(\T^2)$,  global weak solutions $u^\nu \in L^\infty ([0,T]; L^2(\T^2))\cap L^2([0,T]; H^1(\T^2))$ are known to exist since the seminal works of Leray \cite{L34} and Hopf \cite{Hopf51}. The pressure can be then recovered a posteriori as the unique zero-average solution to
$$
-\Delta p^\nu =\div \div (u^\nu \otimes u^\nu).
$$
In two space dimensions, they are unique \cites{RR,BV22}, they instantaneously become smooth, and they satisfy the energy equality
\begin{equation}
    \label{NS en bal}
    \frac{1}{2}\| u^\nu (t)\|_{L^2_x}^2 + \nu \int_0^t \|\nabla u^\nu (s)\|^2_{L^2_x}\,ds= \frac{1}{2}\| u^\nu_0\|_{L^2_x}^2\qquad \forall t\in [0,T].
\end{equation}
 By standard weak compactness arguments, we will often pass to subsequences without specifying it. A direct consequence of \eqref{NS en bal} is that a sequence of $L^2(\T^2)$ bounded initial data results into a sequence of solutions $\{u^\nu\}_{\nu}$ bounded in $L^\infty ([0,T]; L^2(\T^2))$, with dissipation $\{\nu |\nabla u^\nu|^2\}_{\nu}$ bounded in $L^1(\T^2\times [0,T])$. In particular, if $u^\nu  \overset{*}{\rightharpoonup} u$ in $L^\infty ([0,T]; L^2(\T^2))$, we deduce that $\{|u^\nu -u|^2\}_{\nu}$ is bounded in $L^\infty ([0,T]; L^1(\T^2))$. We can thus define the \quotes{dissipation measure} and the \quotes{defect measure}, denoted by $D$ and $\Lambda$ respectively, as
\begin{align} 
    \nu |\nabla u^\nu|^2 &\overset{*}{\rightharpoonup} D \qquad  \text{in } \mathcal M (\T^2\times [0,T]),\\
    |u^\nu -u|^2&\overset{*}{\rightharpoonup} \Lambda \qquad \text{in } L^\infty ([0,T]; \mathcal M(\T^2)).
    \end{align}
 Let us denote the vorticity by $\omega^\nu:= \curl u^\nu$ and $\omega^\nu_0:=\curl u^\nu_0$. Since $\|\omega^\nu (t)\|_{L^2_x}=\|\nabla u^\nu (t)\|_{L^2_x}$, the sequence $\{\nu|\omega^\nu|^2\}_\nu$ generates a measure equivalent to $D$ (see Proposition \ref{P: D and tilde D are equiv}). Moreover, because of the transport structure of the vorticity in two dimensions, the sequence $\{\omega^\nu\}_{\nu}$ stays bounded in $L^\infty ([0,T]; L^1(\T^2))$
 as soon as $\{\omega^\nu_0\}_{\nu}$ is bounded in $\mathcal M (\T^2)$. This allows to define the \quotes{vorticity measure}, denoted by $\Omega$, as 
 \begin{equation}
     |\omega^\nu |\overset{*}{\rightharpoonup} \Omega\qquad \text{in } L^\infty ([0,T]; \mathcal M (\T^2)).
     \end{equation}

Let us remark that none of the above measures is uniquely determined as different subsequences might lead to different limits. 
These three fundamental objects have been playing a major role towards the understanding of the intricate dynamics of incompressible fluids at high Reynolds numbers. Getting a non-trivial $D$ in the inviscid limit goes under the name ``anomalous dissipation", a phenomenon that relates to the presumed ``universality" of turbulence since the foundational works of Kolmogorov \cite{K41} and Onsager \cite{O49}. The measure $\Lambda$, or a \quotes{reduced} version of it \cite{DM88}, quantifies the lack of strong compactness and it is related to the inviscid limit problem as settled in the seminal papers by DiPerna and Majda \cites{diperna1987concentrations, DM87,DM88}, while the vorticity measure $\Omega$ relates to a remarkable concentration compactness argument as first noticed by Delort \cite{delort1991existence}. The main objective of our paper is to study the relation, if any, between these three objects, going beyond what is expected to happen in the three dimensional setting. As it turns out, the approach we propose generalizes all the results from \cites{LMP21,DRP24,CLLS16,ELL_tocome}. However, none of our arguments makes use of \quotes{Gagliardo--Nirenberg \& super-quadratic Gr\"onwall} (or improved versions of it \cite{ELL_tocome}), which was the common strategy in \cites{LMP21,CLLS16,ELL_tocome}. Since several directions are explored, we group them in different subsections. We emphasize that none of the results requires the weak limit to be a distributional solution to the incompressible Euler equations. This goes beyond the previous approaches enlarging the applicability range to the, nowadays not yet excluded, scenario in which oscillations and/or concentrations persist in the inviscid limit.  

\subsection{The measures of dissipation, defect and vorticity (Section~\ref{Measure_comparison})} In this subsection, we investigate relations between the measures $\Lambda$, $D$ and $\Omega$. All the measures considered in this paper will be finite non-negative Borel measures. Given $p\in [1,\infty]$, we recall that $\mu \in L^p ([0,T]; \mathcal M (\T^2))$ if $\mu =\mu_t \otimes dt$ for a weakly measurable\footnote{Weakly measurable means that the map $t\mapsto \langle \mu_t ,\varphi\rangle$ is measurable for any $\varphi \in C^0 (\T^2)$.} map $t\mapsto \mu_t \in \mathcal M (\T^2)$ such that $\mu_t(\T^2)\in L^p([0,T])$. If $\mu,\lambda$ are two measures, we say that $\mu$ is \quotes{absolutely continuous} with respect to $\lambda$, written as $\mu\ll\lambda$, if $\mu(A)=0$ for any measurable set $A$ such that $\lambda (A)=0$. 

We can now state our first theorem. {Let us emphasize that Theorem \ref{T: general leray} below is a fully unconditional statement. As such, it applies to all weak limits of Leray--Hopf solutions. We are not aware of any other unconditional result in this context.

\begin{theorem}\label{T: general leray}
Let  $\{u_0^\nu\}_{\nu}\subset L^2(\T^2)$ be a strongly compact sequence of divergence-free vector fields and let $\{u^\nu\}_{\nu}$ be the corresponding sequence of Leray--Hopf solutions to \eqref{NS}. Assume that 
    \begin{equation}
    \nu|\nabla u^{\nu}|^2\overset{*}{\rightharpoonup} D  \qquad \text{in } \mathcal{M}(\mathbb{T}^2\times [0,T]). \label{assumption2_l}
    \end{equation}
    Then $D\in L^1([0,T];\mathcal{M}(\mathbb{T}^2))$. In addition, assume that $u^\nu  \overset{*}{\rightharpoonup} u$  and $|u^\nu -u|^2 \overset{*}{\rightharpoonup} \Lambda$, respectively in  $L^\infty([0,T];L^2(\T^2))$ and in $L^\infty([0,T];\mathcal M(\T^2))$. Then $D_t \ll \Lambda_t$ for a.e. $t\in [0,T]$.
\end{theorem}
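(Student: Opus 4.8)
Here is the plan.

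\textbf{First assertion ($D$ is Lebesgue in time).} Strong compactness of $\{u_0^\nu\}_\nu$ gives $C_0:=\tfrac12\sup_\nu\|u^\nu_0\|_{L^2_x}^2<\infty$. On $\T^2$ one has $\|\nabla u^\nu(t)\|_{L^2_x}=\|\omega^\nu(t)\|_{L^2_x}$, and since for $t>0$ the solution is smooth, multiplying the vorticity equation $\partial_t\omega^\nu+u^\nu\cdot\nabla\omega^\nu=\nu\Delta\omega^\nu$ by $\omega^\nu$ and using $\div u^\nu=0$ shows $\tfrac12\tfrac{d}{dt}\|\omega^\nu(t)\|_{L^2_x}^2=-\nu\|\nabla\omega^\nu(t)\|_{L^2_x}^2\le 0$: the enstrophy is non-increasing (no vortex stretching in two dimensions). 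Hence $t\,\|\omega^\nu(t)\|_{L^2_x}^2\le\int_0^t\|\omega^\nu(s)\|_{L^2_x}^2\,ds$, which by \eqref{NS en bal} is $\le C_0/\nu$, so
\[
\nu\,\|\nabla u^\nu(t)\|_{L^2_x}^2\ \le\ \frac{C_0}{t}\qquad\text{for all }t\in(0,T],\ \nu>0.
\]
Consequently the time marginal $\pi_\#D$ of $D$ has density $\le C_0/t$ on every $(\varepsilon,T]$, hence is absolutely continuous on $(0,T]$ with an $L^1(0,T)$ density (it has finite total mass). It remains to exclude an atom of $\pi_\#D$ at $t=0$, i.e.\ to show $\sup_\nu\nu\int_0^\varepsilon\|\nabla u^\nu\|_{L^2_x}^2\to0$ as $\varepsilon\to0$, or equivalently (by \eqref{NS en bal}) that $\|u^\nu(\varepsilon)\|_{L^2_x}^2$ recovers $\|u^\nu_0\|_{L^2_x}^2$ uniformly in $\nu$. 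Covering the compact set $\{u_0^\nu\}_\nu$ by finitely many $L^2$-balls of radius $\delta$ around smooth divergence-free fields and testing \eqref{NS} on $(0,\varepsilon)$ against the nearest one, a routine estimate (bounding the quadratic term by $\varepsilon$ and the viscous cross-term by Cauchy--Schwarz together with \eqref{NS en bal}) gives $\langle u^\nu(\varepsilon),u_0^\nu\rangle\ge\|u_0^\nu\|_{L^2_x}^2-\eta(\delta,\varepsilon)$ uniformly in $\nu\le1$, with $\eta(\delta,\varepsilon)\to0$ letting first $\varepsilon\to0$ then $\delta\to0$; since $\|u^\nu(\varepsilon)\|_{L^2_x}^2\ge\langle u^\nu(\varepsilon),u_0^\nu\rangle^2/\|u_0^\nu\|_{L^2_x}^2$ the claim follows, and by disintegration $D=D_t\otimes dt$ with $t\mapsto D_t(\T^2)\in L^1(0,T)$.

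\textbf{Reduction for $D_t\ll\Lambda_t$.} Both $D$ and $\Lambda$ lie in $L^1([0,T];\mathcal M(\T^2))$. If $D_t\not\ll\Lambda_t$ on a time-set of positive measure, a standard measurable selection of Borel sets $B_t\subset\T^2$ with $\Lambda_t(B_t)=0<D_t(B_t)$ produces a Borel set $A$ with $\Lambda(A)=0<D(A)$, so it suffices to prove $D\ll\Lambda$ on $\T^2\times[0,T]$. By outer regularity the core statement to establish is: \emph{if $E\subset\T^2\times[0,T]$ is open with $\Lambda(E)=0$ — equivalently $u^\nu\to u$ in $L^2(Q)$ for every $Q\Subset E$ (test $|u^\nu-u|^2\overset{*}{\rightharpoonup}\Lambda$ against bumps supported in $E$) — then $D(E)=0$}; and more quantitatively that $D$ restricted to a spacetime box is controlled by $\Lambda$ on a slightly larger one.

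\textbf{Killing the dissipation at the Kolmogorov scale.} Fix $(x_0,t_0)\in E$ with $t_0>0$, a box $Q_0=B_{2r}(x_0)\times(t_0-\tau,t_0+\tau)\subset E$, a cutoff $\chi\in C^\infty_c(B_{3r/2}(x_0))$ with $\chi\equiv1$ on $B_r(x_0)$, and $I=[t_0-\tfrac\tau2,t_0+\tfrac\tau2]$, so $\overline{\supp\chi}\times I\Subset E$ and $u^\nu\to u$ in $L^2(I\times\T^2)$ after multiplication by $\chi$. Since $\chi\equiv1$ on $B_r(x_0)$,
\[
\nu\int_I\!\!\int_{B_r(x_0)}\!|\nabla u^\nu|^2
\ \le\ \nu\int_I\|\nabla(\chi u^\nu)\|_{L^2_x}^2
\ =\ \nu\int_I\!\big(\|\nabla P_{\le K}(\chi u^\nu)\|_{L^2_x}^2+\|\nabla P_{>K}(\chi u^\nu)\|_{L^2_x}^2\big),
\]
with $P_{\le K},P_{>K}$ Fourier cut-offs at the Kolmogorov frequency $K=M\nu^{-1/2}$, $M\ge1$ (the cross term vanishes by frequency orthogonality). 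For the high part, $\|\nabla P_{>K}f\|_{L^2}^2\le K^{-2}\|\Delta f\|_{L^2}^2$ and $\|\Delta(\chi u^\nu)\|_{L^2}\lesssim_\chi\|\nabla\omega^\nu\|_{L^2}+\|\nabla u^\nu\|_{L^2}+\|u^\nu\|_{L^2}$; the palinstrophy identity $\nu\int_s^T\|\nabla\omega^\nu\|_{L^2_x}^2\le\tfrac12\|\omega^\nu(s)\|_{L^2_x}^2$ together with the bound from the first assertion (at $s=t_0-\tfrac\tau2$) and \eqref{NS en bal} give $\nu\int_I\|\Delta(\chi u^\nu)\|_{L^2_x}^2\lesssim_{\chi}\nu^{-1}$, hence $\nu\int_I\|\nabla P_{>K}(\chi u^\nu)\|_{L^2_x}^2\lesssim_{\chi,Q_0}M^{-2}$, uniformly in $\nu$. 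For the low part, split once more at a fixed $N$: frequencies $|k|\le N$ contribute $\le\nu N^2\int_I\|\chi u^\nu\|_{L^2_x}^2\to0$ as $\nu\to0$, while frequencies $N<|k|\le K$ contribute (using $\nu K^2=M^2$) at most $M^2\int_I\|P_{>N}(\chi u^\nu)\|_{L^2_x}^2\to M^2\int_I\|P_{>N}(\chi u)\|_{L^2_x}^2\to0$ as $N\to\infty$. Thus $\limsup_\nu\nu\int_I\|\nabla P_{\le K}(\chi u^\nu)\|_{L^2_x}^2=0$ and altogether $\limsup_\nu\nu\int_I\int_{B_r(x_0)}|\nabla u^\nu|^2\lesssim_{\chi,Q_0}M^{-2}$ for every $M$, hence $=0$; by weak-$*$ lower semicontinuity on the open set $B_r(x_0)\times I^\circ$ this gives $D(B_r(x_0)\times I^\circ)=0$, and covering $E\cap\{t>0\}$ by such sets and invoking the absence of a time-atom of $D$ at $\{t=0\}$ one concludes $D(E)=0$. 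The same computation carried out with $\Lambda(Q_0)=\varepsilon>0$ yields the quantitative bound $D(B_r(x_0)\times I^\circ)\lesssim_{\chi,Q_0}M^2\varepsilon+M^{-2}$ for all $M\ge1$; optimizing in $M$ and then Lebesgue-differentiating in $t_0$ (over rational $x_0,r$, at common Lebesgue points) upgrades $\supp D_t\subseteq\supp\Lambda_t$ to $D_t\ll\Lambda_t$ for a.e.\ $t$.

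\textbf{Main obstacle.} The delicate point is the last one: a priori the dissipation can sit at frequencies $\gg\nu^{-1/2}$ and concentrate on a $\Lambda$-null set — purely kinematic sequences do exactly this — and it is only the two-dimensional enstrophy/palinstrophy decay, entering through the bound of the first assertion and the finiteness of $\nu\int_I\|\nabla\omega^\nu\|_{L^2_x}^2$, that confines the dissipation to the Kolmogorov scale, where the naive Bernstein loss $\nu\|\nabla P_{\le K}\cdot\|^2\lesssim M^2\|\cdot\|^2$ becomes harmless because it multiplies the vanishing high-pass energy of the (strongly convergent) limit. Making the passage from the clean statement "$D$ vanishes on open $\Lambda$-null sets" to genuine absolute continuity of the slices fully rigorous — i.e.\ obtaining a comparison $D(Q)\lesssim\Lambda(Q')$ with constants uniform as the spatial scale shrinks, so that the optimization and Lebesgue differentiation close — is the crux; an alternative route, starting from the localized energy identity $\nu\int|\nabla u^\nu|^2\psi=-\tfrac12\tfrac{d}{dt}\int|u^\nu|^2\psi+\int(\tfrac{|u^\nu|^2}{2}+p^\nu)u^\nu\cdot\nabla\psi+\tfrac\nu2\int|u^\nu|^2\Delta\psi$, faces the dual difficulty of passing to the limit in the cubic flux and (nonlocal) pressure terms in the $L^\infty_tL^2_x$ regularity regime. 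The only other technical point is the exclusion of an initial atom of $D$ in the first assertion, which is where the strong compactness of the data is used.
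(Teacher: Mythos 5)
Your treatment of the first assertion is sound and matches the paper's in substance: the bound $\nu\|\nabla u^\nu(t)\|_{L^2_x}^2\le C_0/t$ from enstrophy decay is exactly \eqref{D bounded positive times}, and your exclusion of a time-atom of $D$ at $t=0$ (approximating the compact family of data by finitely many smooth divergence-free fields and testing the equation on $(0,\varepsilon)$) is the same mechanism as \cref{P:dissipation short times quantitative}, which uses the mollified data $u^\nu_{0,\eps}$ as the test field instead of a finite net. For the second assertion your route is genuinely different: you split $\chi u^\nu$ at the Kolmogorov frequency $K=M\nu^{-1/2}$ and combine Bernstein with the palinstrophy bound $\nu^2\int_\delta^T\|\nabla\omega^\nu\|_{L^2_x}^2\lesssim\delta^{-1}$ (the paper's \eqref{higher order dynamical}), whereas the paper integrates by parts once and splits $u^\nu=(u^\nu-u)+u$, bounding the key term $\nu\int(u^\nu-u)\cdot\Delta u^\nu\,\varphi$ by Cauchy--Schwarz against the same palinstrophy bound. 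The two mechanisms carry the same information and the same numerology; the paper's is lighter because the weak limit $u$ itself plays the role of your low-frequency block.

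The genuine gap is the final step, and it is precisely the one you flag as ``the crux'': as written, your argument proves only that $D$ vanishes on \emph{open} $\Lambda$-null sets (i.e. $\supp D\subseteq\supp\Lambda$), and the upgrade you propose does not close. Optimizing $M^2\varepsilon+M^{-2}$ over $M$ produces a \emph{square-root} comparison $D(Q)\lesssim_\delta\sqrt{\Lambda(Q')}$ on boxes; this does not yield absolute continuity by covering (a $\Lambda$-null Borel set can be covered by boxes with $\sum_i\Lambda(Q_i')$ arbitrarily small while $\sum_i\sqrt{\Lambda(Q_i')}$ stays large), and it does not survive Lebesgue differentiation in $t_0$ (dividing $\sqrt{\Lambda(Q')}$ by $|I|$ diverges), while the linear comparison $D(Q)\lesssim\Lambda(Q')$ you ask for is not what the computation delivers. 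The repair, however, is already inside your own estimate: run the frequency splitting against an arbitrary continuous $0\le\varphi\le1$ in place of the box and cutoff, obtaining $\int_{\T^2\times[\delta,T]}\varphi\,dD\le C_\delta\bigl(M^2\int\varphi^2\,d\Lambda+M^{-2}\bigr)$ and hence, after optimizing in $M$, $\int\varphi\,dD\le C_\delta\bigl(\int\varphi^2\,d\Lambda\bigr)^{1/2}$ --- exactly the paper's \eqref{D and L quant pos times}. Since $D$ and $\Lambda$ are finite Radon measures, Urysohn approximation of indicators upgrades this to $D(A)\le C_\delta\,\Lambda(A)^{1/2}$ for every Borel $A\subset\T^2\times[\delta,T]$, which \emph{does} give $D\ll\Lambda$ as space-time measures on $[\delta,T]$; the disintegration statement (\cref{L:slice_measures}) then yields $D_t\ll\Lambda_t$ for a.e. $t$, with no differentiation argument needed. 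So the gap is real but repairable without any new idea beyond what you already set up.
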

In fact, we will prove that $D\in L^\infty_{\rm{loc}}((0,T];\mathcal{M} (\T^2))$ and then the strong $L^2 (\T^2)$ compactness of the initial data is used to rule out atomic concentrations at the initial time (see Proposition \ref{P:dissipation short times quantitative}). This is in fact the only use we make of the initial compactness,  while all the other properties proved for $D_t$ would still be true even without that assumption (see Remark \ref{R:no need of compactness initial}). The property $D_t \ll \Lambda_t$ generalizes, by making it completely local, the main result of \cite{LMP21} proving that strong $L^2 (\T^2\times [0,T])$ compactness rules out anomalous dissipation. In fact, the proof gives a quantitative relation between $D$ and $\Lambda$ for all positive times (see Remark \ref{R:D vs Lambda quantitative}), which is strictly stronger than absolute continuity.

\begin{remark}
    We emphasize that all the results proved in the current paper do not follow the classical approach \cites{CET94,DR00,DRIS24,DDI24,CCFS08,DDII25} in which properties of $D$ are  deduced  by looking at the local energy balance 
    \begin{equation}\label{en bal local}
    (\partial_t - \nu \Delta)\frac{\abs{u^\nu}^2}{2} +\div  \left(\left(\frac{\abs{u^\nu}^2}{2}+p^\nu\right)u^\nu\right) =- \nu \abs{\nabla u^\nu}^2.
        \end{equation}
    Proving any property on $D$ from \eqref{en bal local} would at least require a control of $u^\nu$ in $L^3(\T^2\times[0,T])$, thus out of our setting. It is then necessary to develop a strategy which can capture properties of $D$ by never looking at \eqref{en bal local} locally. This seems to be possible only in two dimensions. 
\end{remark}

Whenever $\{\omega^\nu_0\}_\nu$ is bounded in $\mathcal M (\T^2)$, also the measure $\Omega$ comes into play, imposing stronger constraints on the dissipation.
\begin{theorem}
    \label{T: measure vort}
Let  $\{u_0^\nu\}_{\nu}\subset L^2(\T^2)$ be a strongly compact sequence of divergence-free vector fields such that $\{\omega^\nu_0\}_\nu \subset \mathcal M(\T^2)$ is  bounded. Let $\{u^\nu\}_{\nu}$ be the corresponding sequence of Leray--Hopf solutions to \eqref{NS} and define 
\begin{equation}\label{modified vort}
\hat \Omega^\nu(x,t):= |\omega^{\nu}(x,t)| \int_{B_{\sqrt{\nu}}(x)}  |\omega^{\nu}(y,t)| \,dy.
\end{equation}
Assume 
\begin{itemize}
    \item[(i)]  $\nu|\nabla u^{\nu}|^2\overset{*}{\rightharpoonup} D$ in $\mathcal{M}(\mathbb{T}^2\times [0,T])$;
    \item[(ii)] $u^\nu  \overset{*}{\rightharpoonup} u$ in  $L^\infty([0,T];L^2(\T^2))$ and   $|u^\nu -u|^2 \overset{*}{\rightharpoonup} \Lambda$ in $L^\infty([0,T];\mathcal M(\T^2))$;
        \item[(iii)] $|\omega^\nu|\overset{*}{\rightharpoonup} \Omega$ in $L^\infty([0,T];\mathcal M(\T^2))$; 
    \item[(iv)] $\hat \Omega^\nu \overset{*}{\rightharpoonup} \hat \Omega$ in $L^\infty([0,T];\mathcal M(\T^2))$.
\end{itemize}
Then $D\in L^1([0,T];\mathcal{M}(\mathbb{T}^2))$, $D_t\ll \Lambda_t$, $D_t\ll  \hat \Omega_t$ and $D_t\ll \Omega_t$ for a.e. $t\in [0,T]$. 
\end{theorem}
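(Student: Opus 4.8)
\emph{Proof plan.} Hypotheses (i)--(ii), together with the strong $L^2(\T^2)$ compactness of $\{u^\nu_0\}_\nu$, are exactly those of \cref{T: general leray}; hence $D\in L^1([0,T];\mathcal M(\T^2))$ and $D_t\ll\Lambda_t$ for a.e.\ $t$ are already established, independently of (iii)--(iv). The new content is thus $D_t\ll\hat\Omega_t$ and $D_t\ll\Omega_t$. I would first note that the second follows from the first: since two-dimensional vorticity is (sub)transported, $\sup_t\|\omega^\nu(\cdot,t)\|_{L^1(\T^2)}\le\|\omega^\nu_0\|_{\mathcal M(\T^2)}\le C$ uniformly in $\nu$, so $\hat\Omega^\nu(x,t)\le C\,|\omega^\nu(x,t)|$ pointwise; since weak-$*$ limits preserve inequalities between nonnegative measures, $\hat\Omega\le C\,\Omega$ on $\T^2\times[0,T]$, whence $\hat\Omega_t\le C\,\Omega_t$ and a fortiori $\hat\Omega_t\ll\Omega_t$ a.e. Therefore the whole statement reduces to proving $D_t\ll\hat\Omega_t$ for a.e.\ $t$.

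For this it suffices to prove the space-time absolute continuity $D\ll\hat\Omega$ on $\T^2\times[0,T]$: the pointwise-in-time version then follows by disintegrating $D=f\hat\Omega$ into time slices, using that $D$ and $\hat\Omega$ lie in $L^1$, resp.\ $L^\infty$, of time with values in $\mathcal M(\T^2)$. By \cref{P: D and tilde D are equiv} we may work with the equivalent measure $\tilde D$ generated by $\nu|\omega^\nu|^2$, so it is enough to establish the $\nu$-uniform inequality
\begin{equation}\label{eq:plan-key}
\nu\int_0^T\!\!\int_{\T^2}|\omega^\nu|^2\,\varphi\,dx\,dt\ \le\ C\int_0^T\!\!\int_{\T^2}\hat\Omega^\nu\,\varphi\,dx\,dt\ +\ o_\nu(1)
\end{equation}
for every nonnegative $\varphi\in C^\infty_c(\T^2\times(0,T))$, with $C$ independent of $\nu$; letting $\nu\to0$ this gives $\tilde D\le C\hat\Omega$, hence $\tilde D\ll\hat\Omega$, hence $D\ll\hat\Omega$. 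Heuristically, \eqref{eq:plan-key} is a sub-mean-value estimate at the Kolmogorov scale: after dividing by $|\omega^\nu|$ it reads $\nu\,|\omega^\nu(x,t)|\lesssim\int_{B_{\sqrt\nu}(x)}|\omega^\nu(y,t)|\,dy$ precisely where the enstrophy density is $\sim\nu^{-1}$, i.e.\ where $D$ concentrates, reflecting that the viscosity cannot sustain vorticity spikes much thinner than $\sqrt\nu$.

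To prove \eqref{eq:plan-key} I would localize the transport--diffusion dynamics of the vorticity at scale $\sqrt\nu$, testing the vorticity equation (or Kato's inequality $\partial_t|\omega^\nu|+\div(u^\nu|\omega^\nu|)\le\nu\Delta|\omega^\nu|$) against $\varphi$ times a $\sqrt\nu$-scale regularization of $|\omega^\nu|$ --- a smoothed variant of the weight $\mathbbm{1}_{B_{\sqrt\nu}}*|\omega^\nu|$ built into $\hat\Omega^\nu$. The parabolic term then converts the bare $\nu|\omega^\nu|^2$ into a $\sqrt\nu$-average of $|\omega^\nu|$, up to contributions of $\nu\hat\Omega^\nu$-type already belonging to the right-hand side; the remaining error terms carry explicit powers of $\nu$ and are dominated using only the uniform bounds $\|u^\nu\|_{L^\infty_tL^2_x}\le C$, $\|\omega^\nu\|_{L^\infty_t\mathcal M_x}\le C$, and the enstrophy budget $\nu\int_0^T\|\omega^\nu(t)\|_{L^2_x}^2\,dt\le C$ coming from \eqref{NS en bal}.

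The main obstacle --- and the crux of the whole argument --- is the nonlinear transport term $\div(u^\nu|\omega^\nu|)$. This is exactly the difficulty preventing one from reading off properties of $D$ from the local energy balance \eqref{en bal local}: no $L^3_{t,x}$ bound on $u^\nu$ is available here, so the flux cannot be controlled by brute force. The resolution has to exploit the two-dimensional structure: after the $\sqrt\nu$-regularization the flux becomes a double integral supported on $|x-y|\lesssim\sqrt\nu$ paired with the gradient of the mollifier, and symmetrizing in $x\leftrightarrow y$ together with $\div u^\nu=0$ replaces $u^\nu(x)$ by the increment $u^\nu(x)-u^\nu(y)$ on $|x-y|\lesssim\sqrt\nu$, which is under control in the needed averaged sense since $u^\nu$ is the Biot--Savart transform of a vorticity of uniformly bounded mass; this allows the term to be absorbed into the right-hand side of \eqref{eq:plan-key} up to a remainder controlled by the enstrophy budget. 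If needed, one can further invoke the already-proven $D_t\ll\Lambda_t$, in the quantitative form provided by \cref{T: general leray}, to localize the velocity near the dissipation set. Once \eqref{eq:plan-key} is established, the disintegration step of the second paragraph concludes the proof.
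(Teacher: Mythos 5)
Your reductions are all sound and match the paper: the first two conclusions are imported from \cref{T: general leray}; $D_t\ll\Omega_t$ follows from $D_t\ll\hat\Omega_t$ because $\hat\Omega^\nu\le C|\omega^\nu|$ by \eqref{vortic stays a measure}; one passes to $\tilde D$ via \cref{P: D and tilde D are equiv}; and the time-slicing at the end is exactly \cref{L:slice_measures}. The gap is in the only step that carries real content, namely your route to the key inequality \eqref{eq:plan-key}. You propose to test the vorticity equation (or Kato's inequality) against $\varphi$ times a $\sqrt\nu$-scale regularization of $|\omega^\nu|$, and you correctly identify that this produces the flux term $\div(u^\nu|\omega^\nu|)$ paired with that weight. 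Your proposed cure --- symmetrize to replace $u^\nu(x)$ by the increment $u^\nu(x)-u^\nu(y)$ on $|x-y|\lesssim\sqrt\nu$ and control it because ``$u^\nu$ is the Biot--Savart transform of a vorticity of uniformly bounded mass'' --- does not work: Biot--Savart applied to an $L^1$ (or measure) vorticity gives no modulus of continuity whatsoever, not even boundedness of $u^\nu$, so the increments at scale $\sqrt\nu$ are not controlled in any averaged sense that would dominate the resulting trilinear expression $\iint (u^\nu(x)-u^\nu(y))\cdot\nabla\rho_{\sqrt\nu}(x-y)\,|\omega^\nu(x)||\omega^\nu(y)|\,dx\,dy$ by the available a priori bounds. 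This is precisely the cubic obstruction the paper flags (see the remark after \cref{T: general leray} about never using \eqref{en bal local} locally, and \cref{R:D ll F fails} in the stationary case, where the same localization attempt is explicitly said to produce terms that need not stay bounded as $\nu\to0$). As written, the crux of your argument is an unproven and most likely false estimate.

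The mechanism the paper actually uses avoids the transport term entirely, and you should be aware of it because it is the whole point. One first tests the vorticity equation against $\omega^\nu$ \emph{globally in space}, where $\int u^\nu\cdot\nabla\omega^\nu\,\omega^\nu=0$ exactly by incompressibility; integrating the resulting enstrophy balance over the starting time and using \eqref{NS en bal} gives the parabolic-smoothing bound $\nu^2\int_\delta^T\|\nabla\omega^\nu(t)\|_{L^2_x}^2\,dt\le\|u_0^\nu\|_{L^2_x}^2/\delta$ (\cref{P:dissipation positive times}). The localization is then done at the level of the purely algebraic identity $|\omega^\nu|^2=\omega^\nu\omega^\nu_\alpha+\omega^\nu(\omega^\nu-\omega^\nu_\alpha)$ with $\alpha=\sqrt{\eps\nu}$: the first term is bounded by $\frac{C}{\eps}\hat\Omega^\nu$ pointwise, and the second by Cauchy--Schwarz and the smoothing bound, yielding the error $C\sqrt{\eps/\delta}$ uniformly in $\nu$. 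No PDE is ever tested against a localized weight, so no flux term appears. Note also that this gives only the two-parameter bound $\tilde D(A)\le C\bigl(\eps^{-1}\hat\Omega(A)+\sqrt{\eps}\bigr)$, which suffices for absolute continuity after sending $\eps\to0$; your single-constant form $\tilde D\le C\hat\Omega+o_\nu(1)$ is strictly stronger than what this argument (or the paper) delivers, and you should not claim it.
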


Note that $\{ \hat \Omega^\nu\}_{\nu}\subset L^\infty ([0,T]; L^1(\T^2))$ is bounded and the assumption $(iv)$ is always achieved by compactness. Theorem \ref{T: measure vort} generalizes our previous result \cite{DRP24}, which was itself generalizing \cite{CLLS16} where the very first Onsager supercritical energy conservation condition was obtained for $L^p (\T^2)$ initial vorticity, $p>1$. Indeed, when the initial vorticity has positive\footnote{A singular part with distinguished sign suffices.} singular part it can be proved that $ \hat \Omega=0$. Moreover, when $|\omega^\nu|\otimes |\omega^\nu|$ converges to a product measure, $D$ is spatially purely atomic. We collect these considerations in the following corollary. 

We recall that, given a measure $\mu$ and a Borel set $A$, the symbol $\mu\llcorner A$ denotes the restriction of $\mu$ to $A$, that is $\mu\llcorner A (B):=\mu(A\cap B)$ for all Borel sets $B$. Consequently, we say that $\mu$ is concentrated on $A$ if $\mu=\mu\llcorner A$, or equivalently $\mu(A^c)=0$.

\begin{corollary}\label{C: pos vort and atomic}
Under all the assumptions of Theorem \ref{T: measure vort} the following hold. 
\begin{itemize}
    \item[(a)] If $\omega_0^\nu=f^\nu_0 +\mu_0^\nu$ with $\{f^\nu_0\}_{\nu}\subset L^1(\T^2)$ weakly compact and $\mu_0^\nu \geq 0$, then $ \hat \Omega=0$ and consequently $D=0$.
    \item[(b)] Assume  that $|\omega^\nu|\otimes |\omega^\nu| \overset{*}{\rightharpoonup} \Gamma$ in $L^\infty([0,T]; \mathcal M(\T^2 \times \T^2))$ and there exists $\gamma\in L^\infty([0,T]; \mathcal M(\T^2 ))$ such that $\Gamma_t=\gamma_t\otimes \gamma_t$ for a.e. $t\in [0,T]$. Denoting by $\mathscr L_t$ and $\mathscr O_t$ the sets of atoms of $\Lambda_t$ and $\Omega_t$ respectively, i.e. 
    $$
    \mathscr L_t:= \left\{ x\in\mathbb{T}^2\,:\, \Lambda_t(\left\{ x\right\})>0\right\}\qquad \text{ and } \qquad \mathscr O_t:= \left\{ x\in\mathbb{T}^2\,:\, \Omega_t(\left\{ x\right\})>0\right\} ,
    $$
    we have that $D_t$ is concentrated on $\mathscr L_t\cap \mathscr O_t$, i.e. $D_t =D_t \llcorner \left(\mathscr L_t \cap \mathscr O_t\right)$ for a.e. $t\in [0,T]$.
\end{itemize}
\end{corollary}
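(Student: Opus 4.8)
The plan is to treat \emph{(a)} and \emph{(b)} separately, in both cases taking for granted the absolute continuity relations $D_t\ll\Lambda_t$, $D_t\ll\hat\Omega_t$ and $D_t\ll\Omega_t$ from \cref{T: measure vort} and then pinning down where the right-hand measures live. For \emph{(b)} I would start from the identity, valid for $\varphi\in C^0(\T^2)$ and $\psi\in L^1([0,T])$,
\[
\int_0^T\psi(t)\,\langle\hat\Omega^\nu_t,\varphi\rangle\,dt=\int_0^T\psi(t)\,\big\langle|\omega^\nu_t|\otimes|\omega^\nu_t|,\ \varphi(x)\,\mathbf 1_{\{|x-y|<\sqrt\nu\}}\big\rangle\,dt,
\]
which reads $\hat\Omega^\nu$ as a diagonal localization of $|\omega^\nu|\otimes|\omega^\nu|$ at scale $\sqrt\nu$. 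Fix $\delta>0$ and a continuous cut-off $\eta_\delta$ on $\T^2\times\T^2$ with $\eta_\delta\equiv1$ on $\{|x-y|\le\delta/2\}$, $\eta_\delta\equiv0$ on $\{|x-y|\ge\delta\}$ and $\eta_\delta\downarrow\mathbf 1_{\{x=y\}}$ as $\delta\downarrow0$. For $\varphi,\psi\ge0$ and $\nu<(\delta/2)^2$ one has $\mathbf 1_{\{|x-y|<\sqrt\nu\}}\le\eta_\delta$, so the right-hand side is $\le\int_0^T\psi(t)\langle|\omega^\nu_t|\otimes|\omega^\nu_t|,\varphi(x)\eta_\delta(x,y)\rangle\,dt$; letting $\nu\to0$ (using $\hat\Omega^\nu\overset{*}{\rightharpoonup}\hat\Omega$ and $|\omega^\nu|\otimes|\omega^\nu|\overset{*}{\rightharpoonup}\Gamma$ with $\Gamma_t=\gamma_t\otimes\gamma_t$) and then $\delta\downarrow0$ (monotone convergence and Fubini--Tonelli, since $\gamma_t(\T^2)\in L^\infty_t$) gives, for a.e.\ $t$,
\[
\langle\hat\Omega_t,\varphi\rangle\ \le\ \int_{\T^2}\varphi(x)\,\gamma_t(\{x\})\,d\gamma_t(x)\ =\ \langle\kappa_t,\varphi\rangle,\qquad \kappa_t:=\sum_{a}\gamma_t(\{a\})^2\,\delta_a,
\]
the sum over the at most countably many atoms $a$ of $\gamma_t$. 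Hence $\hat\Omega_t\le\kappa_t$, a purely atomic measure carried by a countable set.

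To finish \emph{(b)}, observe that $D_t\ll\hat\Omega_t\ll\kappa_t$ forces $D_t$ to be concentrated on the countable set of atoms of $\gamma_t$, so $D_t$ is purely atomic, say $D_t=\sum_j c_j\delta_{x_j}$ with $c_j>0$. For each $j$, the relation $D_t\ll\Omega_t$ rules out $\Omega_t(\{x_j\})=0$, so $x_j\in\mathscr O_t$; likewise $D_t\ll\Lambda_t$ gives $x_j\in\mathscr L_t$. Since $D_t$ is purely atomic this yields $D_t=D_t\llcorner(\mathscr L_t\cap\mathscr O_t)$. Note this part uses the hypotheses of \cref{T: measure vort} only through the three absolute continuity relations and the single weak-$*$ limit $|\omega^\nu|\otimes|\omega^\nu|\overset{*}{\rightharpoonup}\gamma\otimes\gamma$.

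For \emph{(a)} the plan is to exploit linearity of the vorticity equation $\partial_t\omega^\nu+u^\nu\cdot\nabla\omega^\nu=\nu\Delta\omega^\nu$ and split $\omega^\nu=g^\nu+h^\nu$, where $g^\nu$ and $h^\nu$ solve this equation with the \emph{same} drift $u^\nu$ and initial data $f^\nu_0$ and $\mu^\nu_0$ respectively. By the maximum principle $h^\nu(\cdot,t)\ge0$ and $\|h^\nu(t)\|_{L^1}=\mu^\nu_0(\T^2)$ is conserved, while $\{g^\nu(t)\}_{\nu,t}$ stays uniformly equi-integrable in $L^1(\T^2)$, because the heat semigroup and the transport along the (smooth, volume-preserving) flow of $u^\nu$ both act by averaging against probability measures and so do not worsen the equi-integrability modulus of $f^\nu_0$. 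From $|\omega^\nu|\le|g^\nu|+h^\nu$ one bounds $\hat\Omega^\nu\le(|g^\nu|+h^\nu)(x,t)\int_{B_{\sqrt\nu}(x)}(|g^\nu|+h^\nu)(y,t)\,dy$; the three terms carrying a factor $|g^\nu|$ have total mass $\lesssim\big(\|g^\nu(t)\|_{L^1}+\|h^\nu(t)\|_{L^1}\big)\sup_x\int_{B_{\sqrt\nu}(x)}|g^\nu(y,t)|\,dy\to0$ by equi-integrability. For the surviving term I would invoke the logarithmic lower bound for the energy of a non-negative vorticity on $\T^2$: with $G$ the Green function of $-\Delta$ and $r\le\tfrac12$,
\[
\|\mathrm{BS}(h^\nu(t))\|_{L^2}^2=\langle G*h^\nu(t),h^\nu(t)\rangle\ \ge\ c\,\Big(\log\tfrac1r\Big)\int_{\T^2}\int_{B_r(x)}h^\nu(y,t)\,h^\nu(x,t)\,dy\,dx\ -\ C\,\|h^\nu(t)\|_{L^1}^2,
\]
so that with $r=\sqrt\nu$ the last term of $\hat\Omega^\nu$ also has total mass $O\big((\log\tfrac1\nu)^{-1}\big)\to0$, \emph{provided} $\|\mathrm{BS}(h^\nu(t))\|_{L^2}$ is bounded uniformly in $\nu$ and (after integrating in $t$) in time. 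Granting this, $\hat\Omega=0$, and \cref{T: measure vort} gives $D_t\ll\hat\Omega_t=0$, i.e.\ $D=0$.

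The main obstacle is precisely this uniform energy bound on the positive part $h^\nu$. The balance \eqref{NS en bal} controls only the full velocity $\|u^\nu(t)\|_{L^2}=\|\mathrm{BS}(\omega^\nu(t))\|_{L^2}$; since $\|\mathrm{BS}(h^\nu)\|_{L^2}\le\|u^\nu\|_{L^2}+\|\mathrm{BS}(g^\nu)\|_{L^2}$ with $\|\mathrm{BS}(g^\nu(t))\|_{L^2}=\|g^\nu(t)\|_{H^{-1}}$ not controlled by weak $L^1$-compactness of $f^\nu_0$ alone (in two dimensions $L^1\not\hookrightarrow H^{-1}$), a naive splitting of the energy fails, and the $H^{-1}$ norm of $g^\nu$ is not propagated by the transport--diffusion with drift $u^\nu$ either — a Gr\"onwall estimate would cost a factor $\exp\big(\nu^{-1}\!\int_0^t\|\nabla u^\nu\|_{L^2}^2\big)$ that blows up as $\nu\to0$. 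This is where the sign hypothesis on $\mu^\nu_0$ has to be used a second time: positivity of $h^\nu$ allows one to estimate the cross term $\langle\mathrm{BS}(g^\nu),\mathrm{BS}(h^\nu)\rangle=\langle g^\nu,(-\Delta)^{-1}h^\nu\rangle$ by pairing the logarithmic potential $(-\Delta)^{-1}h^\nu$ against the equi-integrable sequence $g^\nu$, to close the bound via \eqref{NS en bal}, and then to propagate it in time using conservation of $\|h^\nu(t)\|_{L^1}$. This is the technical heart of \emph{(a)} and the point where the argument of \cite{DRP24} has to be adapted from data with no singular part to data whose regular part is merely $L^1$-weakly compact.
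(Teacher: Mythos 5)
Part \emph{(b)} of your proposal is correct and is essentially the paper's argument: you read $\hat\Omega^\nu$ as a diagonal localization of $|\omega^\nu|\otimes|\omega^\nu|$ at scale $\sqrt\nu$, dominate the indicator by a continuous cut-off, pass to the limit using $\Gamma_t=\gamma_t\otimes\gamma_t$ to get $\hat\Omega_t\le\sum_a\gamma_t(\{a\})^2\delta_a$, and then combine $D_t\ll\hat\Omega_t$ (pure atomicity) with $D_t\ll\Lambda_t$ and $D_t\ll\Omega_t$ to place the atoms in $\mathscr L_t\cap\mathscr O_t$. Nothing to add there.

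Part \emph{(a)} has a genuine gap, and you have correctly located it yourself: your route needs a uniform-in-$\nu$ (and in $t$) bound on $\|\mathrm{BS}(h^\nu(t))\|_{L^2}$ for the \emph{positive part alone}, which does not follow from \eqref{NS en bal} (that controls only the full velocity) nor from weak $L^1$ compactness of $g^\nu$ (since $L^1\not\hookrightarrow H^{-1}$ in two dimensions). The fix you sketch --- pairing the logarithmic potential $(-\Delta)^{-1}h^\nu$ against the equi-integrable family $g^\nu$ --- does not close either: equi-integrability gives a De la Vall\'ee Poussin function $\beta$ that may grow arbitrarily slowly beyond linear, while $(-\Delta)^{-1}h^\nu$ is in general only exponentially integrable, so the Young/Legendre pairing $\langle g^\nu,(-\Delta)^{-1}h^\nu\rangle$ is not controlled unless $\beta$ is at least of order $s\log s$. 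The paper sidesteps the whole issue: by \cref{P:split_vort_L1_pos} one has the pointwise inequality $|\omega^\nu|\le 2|f^\nu|+\omega^\nu$ (inequality \eqref{ineq_delort_pos}), so in \cref{P: vort on balls Delort} the quantity $\sup_x\int_{B_r(x)}|\omega^\nu|$ is estimated by testing the \emph{signed} vorticity $\omega^\nu$ against the Delort logarithmic cut-off $\chi_r$ and integrating by parts, $\int\omega^\nu\chi_r=-\int u^\nu\cdot\nabla^\perp\chi_r\le\|u^\nu\|_{L^2}\|\nabla\chi_r\|_{L^2}\le C(\log\tfrac1r)^{-1/2}$, which only uses the energy bound on the full $u^\nu$; the equi-integrable part contributes $G_\beta(r)$. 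Then $\int\hat\Omega^\nu\le\|\omega^\nu\|_{L^1}\sup_x\int_{B_{\sqrt\nu}(x)}|\omega^\nu|\to0$ and $D=0$ follows from \cref{T: measure vort}. In short: do not try to bound the energy of $h^\nu$ separately; use the sign only through \eqref{ineq_delort_pos} and pair the full $\omega^\nu$ with the log cut-off.
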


A practical assumption which guarantees that $\Gamma$ is a product measure in space is when $|\omega^\nu_t|\overset{*}{\rightharpoonup} \Omega_t$ for a.e. $t$. In this case $\Gamma_t = \Omega_t\otimes \Omega_t$. However, in view of wild oscillations in time, this might fail in general (see Remark \ref{R:schocet examp}). A slightly weaker assumption on $\Gamma_t$ is discussed in Remark \ref{R:relax product assumption}.

The argument used to prove $(b)$ in Corollary \ref{C: pos vort and atomic} is sharp (see Remark \ref{R:sharp atomic dynamics}). Being $\Lambda$ and $\Omega$ finite measures, the sets $\mathscr L_t$ and $\mathscr O_t$ are at most countable for a.e. $t$. When the vorticity is a measure, the fact that $D_t$ is purely atomic aligns with the well known concentration compactness principle by Lions \cites{PLL1,PLL2}. However, this is in some sense quite surprising. Let us explain why. One of the easiest applications of the Lions argument is the study of compactness in the Sobolev embedding $W^{1,1} (\T^2) \subset L^2 (\T^2)$ (see for instance \cite{Struwe}*{Section 4.8} and \cite{DT23} for recent generalizations). In this setting, the concentration compactness principle shows that the loss of $L^2 (\T^2)$ compactness is fully characterized by a purely atomic measure concentrated on the set of atoms appearing in the absolute value of the gradient. However, as we shall show in Proposition \ref{P: no weak lions}, the failure of the Calder\'on--Zygmund estimate in $L^1 (\T^2)$ allows the defect measure $\Lambda$ to diffuse even if the vorticity is a measure.  Arguing this way, the naive 
interpretation of \eqref{NS en bal} as $\nu|\nabla u^\nu|^2\sim |u^\nu|^2$, would suggest that $D$ should diffuse as well, as opposed to what it is proved in part $(b)$ of Corollary \ref{C: pos vort and atomic}. Of course, this reasoning is \quotes{modulo time oscillations}, which leads us to also consider the steady case where this is proved in full generality (see Theorem \ref{T: measure vort steady} below). In particular, although the end point failure of Calder\'on--Zygmund, a measure vorticity always constraints the  dissipation to fully concentrate in space, and wild oscillations in time are the only true obstacle.

\begin{remark}
The fact that $D_t$ is concentrated on $\mathscr{ L}_t\cap \mathscr O_t$ shows that, in order to observe a non-trivial dissipation, spatial atomic concentrations must simultaneously happen for both $\Lambda_t$ and $\Omega_t$, at the same time $t$ and at the same point $x$. As we shall prove in Proposition \ref{P: atoms in lambda vs omega}, the concentration of any of the two measures might, in principle, happen independently on the other. In other words, it might be possible that $\mathscr{L}_t\cap \mathscr O_t=\emptyset$ for a.e. $t$, even if none of the two is empty.
\end{remark}

\subsection{The dissipative scale and anomalous dissipation criteria (Section~\ref{S:K41 scale})}
The main objective of this section is to show that strong compactness and vorticity concentration at the dissipative scale fully characterize anomalous dissipation in two dimensions.

A consequence of Theorem \ref{T: general leray} is that strong compactness of  $\{u^\nu\}_{\nu}\subset L^2 (\T^2\times [0,T])$ implies $D=0$. As previously proved in \cite{LMP21}, strong $L^2 (\T^2\times [0,T])$ compactness is in fact equivalent to energy conservation of the inviscid limit. In the direction of quantifying the relevant scales contributing to the energy dissipation, we are able to show that the ones above the \quotes{dissipative scale} do not matter at all. In the two-dimensional setting this corresponds to consider length scales\footnote{In the physics literature, the dissipative scale in two dimensions is known as the Batchelor--Kraichnan scale \cites{Batch69,Kraich67} and it relates to enstrophy dissipation.} $\sim \sqrt{\nu}$. It follows that the \quotes{inertial range} is always deprived of energetic content independently on any uniform (in viscosity) regularity retained at these scales.

Given $\mathcal S^\nu_2(y,t):=\norm{u^\nu (\cdot + y, t) - u^\nu(\cdot, t)}_{L^2_x}^2$, for any $\ell>0$ we define 
\begin{align}
  S^\nu_2(\ell):= \int_0^T \fint_{B_{\ell}(0)}\mathcal S^\nu_2(y,t) \, dydt. \label{ell compact uniform}
\end{align}
This object relate to \quotes{absolute structure functions} of second order\footnote{{It is a way to measure Besov regularity in the space variable (see Remark \ref{R:besov equivalence}).}}, which play a major role in the context of turbulent fluids \cite{Frisch95}.  Being of second order, it coincides with the longitudinal one \cite{Driv22} for solutions to \eqref{NS}. Consider now a sequence of positive numbers $\{\ell_\nu\}_\nu$. In light of the classical Fr\'echet--Kolmogorov compactness criterion\footnote{The condition $\lim_{\ell\rightarrow 0}\sup_{\nu>0}   S^\nu_2(\ell)=0$ becomes truly equivalent to strong compactness in space-time. See for instance \cite{LMP21}*{Theorem 2.11}.}, we shall refer to $  S^\nu_2(\ell_\nu)\rightarrow 0$ as \quotes{compactness at scale $\ell_\nu$}.  
\begin{theorem}
    \label{T: K41 scale intro}
    Let  $\{u_0^\nu\}_{\nu}\subset L^2(\T^2)$ be a strongly compact sequence of divergence-free vector fields and let $\{u^\nu\}_{\nu}$ be the corresponding sequence of Leray--Hopf solutions to \eqref{NS}. Then
    \begin{equation}
        \label{k41 comp implies no AD}
        \lim_{\nu\rightarrow 0}  S^\nu_2(\sqrt{\nu})=0  \qquad \Longleftrightarrow \qquad \lim_{\nu\rightarrow 0} \nu \int_0^T \|\nabla u^\nu (t)\|^2_{L^2_x}\,dt=0.
    \end{equation}
\end{theorem}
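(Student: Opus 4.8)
The plan is to prove the two implications separately, with the $(\Leftarrow)$ direction being essentially trivial and the $(\Rightarrow)$ direction being the substantive one. For $(\Leftarrow)$: if $\nu\int_0^T\|\nabla u^\nu(t)\|_{L^2_x}^2\,dt\to 0$, then since $\mathcal S_2^\nu(y,t)\le |y|^2\|\nabla u^\nu(t)\|_{L^2_x}^2$ (by the elementary finite-difference bound $\|u^\nu(\cdot+y,t)-u^\nu(\cdot,t)\|_{L^2_x}\le |y|\,\|\nabla u^\nu(t)\|_{L^2_x}$), averaging over $B_{\sqrt\nu}(0)$ gives $\fint_{B_{\sqrt\nu}(0)}\mathcal S_2^\nu(y,t)\,dy\lesssim \nu\,\|\nabla u^\nu(t)\|_{L^2_x}^2$, and integrating in time yields $S_2^\nu(\sqrt\nu)\lesssim \nu\int_0^T\|\nabla u^\nu(t)\|_{L^2_x}^2\,dt\to 0$.

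For the hard direction $(\Rightarrow)$, I would argue by contradiction: suppose $S_2^\nu(\sqrt\nu)\to 0$ but, along a subsequence, $\nu\int_0^T\|\nabla u^\nu(t)\|_{L^2_x}^2\,dt\to c>0$, i.e. $D(\T^2\times[0,T])=c>0$ for the limiting dissipation measure. The key idea is that compactness at scale $\sqrt\nu$ forces strong space-time compactness of $\{u^\nu\}$ \emph{outright}: the issue is to upgrade control of increments on balls of radius $\sqrt\nu$ to control on balls of any fixed radius $\ell$. This is where the structure of \eqref{NS} must be used. I would split the increment at scale $\ell$ dyadically: write $u^\nu(\cdot+y)-u^\nu(\cdot)$ for $|y|\sim\ell$ as a telescoping sum passing through scales $\sqrt\nu, 2\sqrt\nu,\dots,\ell$, but this alone loses too much. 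Instead the cleaner route is to use the smoothing of Navier--Stokes: for $t$ bounded away from $0$, $u^\nu(t)$ is smooth with quantitative bounds, and one interpolates — frequencies below $\nu^{-1/2}$ are controlled by the low-frequency (large-scale) part which is handled by the Fréchet--Kolmogorov criterion applied to the limit $u$, while frequencies above $\nu^{-1/2}$ carry energy $\lesssim \nu\|\nabla u^\nu\|_{L^2}^2 \cdot$(something) — more precisely, by Plancherel, $\sum_{|k|\ge \nu^{-1/2}}|\hat u^\nu_k|^2 \le \nu\sum_k |k|^2|\hat u^\nu_k|^2 = \nu\|\nabla u^\nu\|_{L^2}^2$. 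Thus the high-frequency part of $u^\nu$ has $L^2$-norm squared bounded by $\nu\|\nabla u^\nu\|_{L^2}^2$, whose time integral is bounded but need not vanish — so this decomposition shows high frequencies are exactly the potential obstruction, and $S_2^\nu(\sqrt\nu)\to 0$ must be shown to kill them.

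The real mechanism, then, is: $S_2^\nu(\sqrt\nu)\to 0$ says increments over displacements of size $\sqrt\nu$ are negligible in $L^2_{x,t}$, but increments over size $\sqrt\nu$ detect precisely the frequencies of order $\nu^{-1/2}$ and above (since $e^{ik\cdot y}-1$ is bounded below in modulus on average over $|y|\le\sqrt\nu$ when $|k|\gtrsim\nu^{-1/2}$). Quantitatively, $\fint_{B_{\sqrt\nu}(0)}|e^{ik\cdot y}-1|^2\,dy \gtrsim \min(1,\nu|k|^2) \gtrsim 1$ for $|k|\ge \nu^{-1/2}$, so by Plancherel $\fint_{B_{\sqrt\nu}(0)}\mathcal S_2^\nu(y,t)\,dy \gtrsim \sum_{|k|\ge\nu^{-1/2}}|\hat u^\nu_k(t)|^2 + \nu\sum_{|k|<\nu^{-1/2}}|k|^2|\hat u^\nu_k(t)|^2$. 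Integrating in time, $S_2^\nu(\sqrt\nu)\gtrsim \nu\int_0^T\sum_{|k|<\nu^{-1/2}}|k|^2|\hat u^\nu_k|^2\,dt$, while the complementary high-frequency tail of the dissipation is $\nu\int_0^T\sum_{|k|\ge\nu^{-1/2}}|k|^2|\hat u^\nu_k|^2\,dt$ which is $\le \nu^{-1/2}\cdot(\text{something})$... this needs a further input. Here I would invoke the smoothing: for each fixed $\delta>0$, on $[\delta,T]$ Navier--Stokes gives $\nu\int_\delta^T\|\nabla u^\nu\|_{L^2}^2\,dt$ controlled, and a second application of the energy balance on $[\delta,T]$ plus the enstrophy equation (which in 2D reads $\frac12\|\omega^\nu(t)\|_{L^2}^2 + \nu\int_\delta^t\|\nabla\omega^\nu\|_{L^2}^2 = \frac12\|\omega^\nu(\delta)\|_{L^2}^2$, since the vorticity nonlinearity $u^\nu\cdot\nabla\omega^\nu$ is orthogonal to $\omega^\nu$) gives $\nu\int_\delta^T\|\nabla\omega^\nu\|_{L^2}^2 \le \frac12\|\omega^\nu(\delta)\|_{L^2}^2 \lesssim \frac{1}{\nu\delta}\|u^\nu_0\|_{L^2}^2$; hence $\nu\sum_{|k|\ge\nu^{-1/2}}|k|^2|\hat u^\nu_k(t)|^2 \le \nu^2\sum_k|k|^4|\hat u^\nu_k(t)|^2 = \nu^2\|\nabla\omega^\nu(t)\|_{L^2}^2$ (wait — $|k|^2 \le \nu|k|^4$ on this range), so $\nu\int_\delta^T\sum_{|k|\ge\nu^{-1/2}}|k|^2|\hat u^\nu_k|^2\,dt \le \nu^2\int_\delta^T\|\nabla\omega^\nu\|_{L^2}^2\,dt \le \nu\cdot\frac{C}{\delta} \to 0$. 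Therefore the high-frequency part of the dissipation on $[\delta,T]$ vanishes, and the low-frequency part is controlled by $S_2^\nu(\sqrt\nu)\to 0$, so $\nu\int_\delta^T\|\nabla u^\nu\|_{L^2}^2\,dt\to 0$ for every $\delta>0$; combined with \cref{P:dissipation short times quantitative} (or the a priori bound $D\in L^\infty_{\rm loc}((0,T];\mathcal M(\T^2))$ together with strong compactness of the initial data ruling out mass of $D$ at $t=0$), we conclude $\nu\int_0^T\|\nabla u^\nu\|_{L^2}^2\,dt\to 0$, the desired contradiction.

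The main obstacle I anticipate is the interface between the frequency-splitting argument and the time interval $[\delta,T]$ versus $[0,\delta]$: one must make sure that the $\delta\to 0$ limit of the near-initial-time dissipation is handled by the strong $L^2$-compactness of the initial data (exactly as in \cref{T: general leray} and \cref{P:dissipation short times quantitative}), and that the constants in the enstrophy smoothing estimate $\|\omega^\nu(\delta)\|_{L^2}^2 \lesssim (\nu\delta)^{-1}$ are genuinely uniform in $\nu$ — which they are, by the standard parabolic smoothing $\nu\delta\|\nabla u^\nu(\delta)\|_{L^2}^2 \lesssim \|u^\nu_0\|_{L^2}^2$ obtained by integrating \eqref{NS en bal} against time. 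A secondary technical point is the lower bound $\fint_{B_{\sqrt\nu}(0)}|e^{ik\cdot y}-1|^2\,dy \gtrsim \min(1,\nu|k|^2)$ uniformly in $k$, which is a routine but necessary computation of an oscillatory integral over a $2$-ball.
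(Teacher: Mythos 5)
Your $(\Leftarrow)$ direction and your treatment of the initial layer $[0,\delta]$ via the strong compactness of the data (i.e. \cref{P:dissipation short times quantitative}) are exactly as in the paper. The genuine gap is in the step where you claim that the high-frequency part of the dissipation on $[\delta,T]$ vanishes. The enstrophy balance started at $t=\delta$, combined with the smoothing bound $\|\omega^\nu(\delta)\|_{L^2_x}^2\lesssim(\nu\delta)^{-1}\|u^\nu_0\|_{L^2_x}^2$ (which is \eqref{D bounded positive times}, and which you yourself quote correctly at the end), gives only
\begin{equation}
\nu^2\int_\delta^T\|\nabla\omega^\nu(t)\|_{L^2_x}^2\,dt\;\le\;\frac{C}{\delta},
\end{equation}
i.e. \eqref{higher order dynamical}: this quantity is \emph{bounded}, not $O(\nu/\delta)$. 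Your chain ``$\nu\int_\delta^T\sum_{|k|\ge\nu^{-1/2}}|k|^2|\hat u^\nu_k|^2\,dt\le\nu^2\int_\delta^T\|\nabla\omega^\nu\|_{L^2_x}^2\,dt\le\nu\cdot\frac{C}{\delta}\to0$'' inserts a spurious factor of $\nu$. With the correct bookkeeping this route only shows that the high-frequency dissipation is $O(1/\delta)$, so it does not exclude that all of the anomalous dissipation sits at frequencies $\gtrsim\nu^{-1/2}$ --- which is precisely where it would have to sit. As written, the substantive implication of the theorem is therefore not proved.

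The fix stays entirely inside your own frequency decomposition, and it is the point where the hypothesis must be used a second time: your Plancherel lower bound yields not only the low-frequency dissipation control but also $\int_0^T\sum_{|k|\ge\nu^{-1/2}}|\hat u^\nu_k(t)|^2\,dt\lesssim S^\nu_2(\sqrt\nu)$, the high-frequency \emph{energy}, which you derive and then never use. Combining it with \eqref{higher order dynamical} by Cauchy--Schwarz gives
\begin{equation}
\nu\int_\delta^T\sum_{|k|\ge\nu^{-1/2}}|k|^2|\hat u^\nu_k|^2\,dt
\le\Big(\nu^2\int_\delta^T\|\nabla\omega^\nu\|_{L^2_x}^2\,dt\Big)^{\frac12}\Big(\int_\delta^T\sum_{|k|\ge\nu^{-1/2}}|\hat u^\nu_k|^2\,dt\Big)^{\frac12}
\le\frac{C}{\sqrt\delta}\big(S^\nu_2(\sqrt\nu)\big)^{\frac12},
\end{equation}
which, added to your low-frequency bound, recovers exactly the paper's quantitative estimate \eqref{Ad bounded by k41 comp}; the conclusion then follows for every $\delta>0$ and the initial layer is absorbed as you indicate (no argument by contradiction, and none of the telescoping/dyadic or Fréchet--Kolmogorov detours, are needed). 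This corrected Fourier-side computation is the same mechanism as the paper's proof of \cref{P:k41 scale quantitative bounds}, carried out there in physical space by mollifying at scale $\alpha=\sqrt\nu$: the term $\nu\int(u^\nu-u^\nu_\alpha)\cdot\Delta u^\nu$ is your high-frequency block, estimated by Cauchy--Schwarz against $\nu\|\Delta u^\nu\|_{L^2}=\nu\|\nabla\omega^\nu\|_{L^2}$, while $\nu\int\nabla u^\nu_\alpha:\nabla u^\nu$ is your low-frequency block.
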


As well as \cite{LMP21} shows that the $L^2 (\T^2\times [0,T])$ compactness is equivalent to energy conservation of the inviscid limit, Theorem \ref{T: K41 scale intro}  shows that $D=0$ is equivalent to the  compactness at the dissipative length scale\footnote{In fact, it is more likely that $\sqrt{\nu}$ appears for scaling reasons unrelated to the usual considerations used to identify the dissipative range. Indeed, the latter relates to the enstrophy anomaly.}. As a consequence, we also capture the sharp length scales on quadratic structure functions decay that has been considered in \cite{LMP21}. Further comments as well as more general and quantitative versions of Theorem \ref{T: K41 scale intro} will be given in Section \ref{S:K41 scale}.

As it was maybe already apparent from the definition of $ \hat \Omega^\nu$ in \eqref{modified vort}, the same phenomenon happens at the \quotes{concentration level} as soon as the initial vorticity is a measure. In order to state the next theorem, let us define the (global) concentrated 
versions of $\Lambda$ and $\Omega$ at scale $\ell$ as
\begin{align}
   \Lambda^{\nu}_{\rm con} (\ell) &:= \int_0^T \left(\sup_{x\in \T^2}\int_{B_{\ell}(x)} | u^\nu(y,t)-u (y,t)|^2\,dy \right)^\frac12 dt ,\label{lambda concentr k41 scale}\\
    \Omega^{\nu}_{\rm con} (\ell)&:= \int_0^T \left(\sup_{x\in \T^2}\int_{B_{\ell}(x)} |\omega^\nu (y,t)|\,dy \right)dt. \label{lambda concentr k41 scale}
\end{align}
In defining $\Lambda^{\nu}_{\rm con} (\ell)$ we are implicitly assuming that  $u^\nu  \overset{*}{\rightharpoonup} u$ in  $L^\infty ([0,T]; L^2(\T^2))$. Note that, since  $u\in L^\infty ([0,T]; L^2(\T^2))$,  the absolute continuity of the Lebesgue integral, together with the dominated convergence theorem applied in the time variable, yields to
\begin{equation}\label{lambda con equivalent definition}
\lim_{\nu\rightarrow 0}\Lambda^{\nu}_{\rm con} (\ell_\nu)=0 \qquad \Longleftrightarrow\qquad \lim_{\nu\rightarrow 0}\int_0^T \left(\sup_{x\in \T^2}\int_{B_{\ell_\nu}(x)} | u^\nu(y,t)|^2\,dy \right)^\frac12 dt=0
\end{equation}
for any length scales such that $\ell_\nu\rightarrow 0$ as $\nu\to 0$.
\begin{theorem}\label{T: k41 scale concentration}
Let  $\{u_0^\nu\}_{\nu}\subset L^2(\T^2)$ be a strongly compact sequence of divergence-free vector fields such that $\{\omega^\nu_0\}_\nu \subset \mathcal M(\T^2)$ is  bounded. Let $\{u^\nu\}_{\nu}$ be the corresponding sequence of Leray--Hopf solutions to \eqref{NS} and assume that  $u^\nu  \overset{*}{\rightharpoonup} u$ in  $L^\infty([0,T];L^2(\T^2))$. Then
\begin{equation}
    \label{condition on veloc L2}
    \lim_{\nu\rightarrow 0} \Lambda^{\nu}_{\rm con} (\sqrt{\nu})=0\qquad \Longrightarrow \qquad \lim_{\nu\rightarrow 0}\nu \int_0^T \|\nabla u^\nu (t)\|^2_{L^2_x}\,dt=0
\end{equation}
and 
\begin{equation}
    \label{condition on vort L1}
     \lim_{\nu\rightarrow 0} \Omega^{\nu}_{\rm con} (\sqrt{\nu})=0\qquad \Longleftrightarrow \qquad \lim_{\nu\rightarrow 0}\nu \int_0^T \|\nabla u^\nu (t)\|^2_{L^2_x}\,dt=0.
\end{equation}
In particular
\begin{equation}\label{lambda vs omega implication for NS}
 \lim_{\nu\rightarrow 0} \Lambda^{\nu}_{\rm con} (\sqrt{\nu})=0\qquad \Longrightarrow \qquad \lim_{\nu\rightarrow 0} \Omega^{\nu}_{\rm con} (\sqrt{\nu})=0.
\end{equation}
\end{theorem}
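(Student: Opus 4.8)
The plan is to establish the two directions of \eqref{condition on vort L1} together with the implication \eqref{condition on veloc L2}; the last assertion \eqref{lambda vs omega implication for NS} is then immediate by chaining ``$\Longrightarrow$'' of \eqref{condition on veloc L2} with ``$\Longleftarrow$'' of \eqref{condition on vort L1}. For the two implications ``$\Omega^\nu_{\rm con}(\sqrt\nu)\to0\Rightarrow$ no anomalous dissipation'' and \eqref{condition on veloc L2}, note that $\{\nu|\nabla u^\nu|^2\}_\nu$, $\{|u^\nu-u|^2\}_\nu$, $\{|\omega^\nu|\}_\nu$ and $\{\hat\Omega^\nu\}_\nu$ are bounded in $\mathcal M(\T^2\times[0,T])$, resp.\ $L^\infty([0,T];\mathcal M(\T^2))$, so from any subsequence we may extract a further one along which $D$, $\Lambda$, $\Omega$, $\hat\Omega$ all exist; since $\nu\int_0^T\|\nabla u^\nu(t)\|_{L^2_x}^2\,dt\to D(\T^2\times[0,T])$ along such a subsequence, it suffices to prove $D=0$ on it.

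The direction ``$\Longleftarrow$'' of \eqref{condition on vort L1} is elementary and is precisely where the choice $\ell=\sqrt\nu$ enters: Cauchy--Schwarz on each ball gives $\int_{B_{\sqrt\nu}(x)}|\omega^\nu(y,t)|\,dy\le|B_{\sqrt\nu}|^{1/2}\|\omega^\nu(t)\|_{L^2_x}=\sqrt{\pi\nu}\,\|\nabla u^\nu(t)\|_{L^2_x}$ (recall $\|\omega^\nu(t)\|_{L^2_x}=\|\nabla u^\nu(t)\|_{L^2_x}$), and then Cauchy--Schwarz in time yields $\Omega^\nu_{\rm con}(\sqrt\nu)\le\sqrt{\pi T}\,(\nu\int_0^T\|\nabla u^\nu(t)\|_{L^2_x}^2\,dt)^{1/2}$, so vanishing dissipation forces $\Omega^\nu_{\rm con}(\sqrt\nu)\to0$. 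For ``$\Longrightarrow$'' of \eqref{condition on vort L1} I would reduce to \cref{T: measure vort} by showing $\hat\Omega=0$. Set $g^\nu(t):=\sup_{x\in\T^2}\int_{B_{\sqrt\nu}(x)}|\omega^\nu(y,t)|\,dy$, so that $\int_0^T g^\nu(t)\,dt=\Omega^\nu_{\rm con}(\sqrt\nu)\to0$; by the very definition of $\hat\Omega^\nu$ in \eqref{modified vort} one has the pointwise bound $\hat\Omega^\nu(x,t)\le|\omega^\nu(x,t)|\,g^\nu(t)$. Since $\{\omega^\nu\}_\nu$ is bounded in $L^\infty([0,T];L^1(\T^2))$, say by $M$, for every $0\le\varphi\in C^0(\T^2\times[0,T])$ we get $\langle\hat\Omega^\nu,\varphi\rangle\le\|\varphi\|_{C^0}\int_0^T g^\nu(t)\,\|\omega^\nu(t)\|_{L^1_x}\,dt\le\|\varphi\|_{C^0}\,M\,\Omega^\nu_{\rm con}(\sqrt\nu)\to0$, whence $\hat\Omega=0$. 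Then \cref{T: measure vort} (whose hypotheses (i)--(iv) hold along our subsequence) gives $D_t\ll\hat\Omega_t=0$ for a.e.\ $t$, so $D_t=0$ a.e., and since $D\in L^1([0,T];\mathcal M(\T^2))$ we conclude $D=0$, i.e.\ the dissipation vanishes.

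For \eqref{condition on veloc L2} I would run the same scheme on the velocity side, using \cref{T: general leray} ($D\in L^1([0,T];\mathcal M(\T^2))$ and $D_t\ll\Lambda_t$) in place of \cref{T: measure vort}. The difficulty, and the main obstacle, is that $|u^\nu-u|^2$ carries no analogue of the ``two-scale'' product structure built into $\hat\Omega^\nu$, so $D_t\ll\Lambda_t$ alone does not close the argument: $\Lambda^\nu_{\rm con}(\sqrt\nu)\to0$ rules out concentration of the defect at the scale $\sqrt\nu$, but not, a priori, atoms of $\Lambda_t$ created by $L^2$ oscillations of $u^\nu$ at scales $\sqrt\nu\ll\ell\ll1$ (such oscillations are what the $L^1$ bound on $\omega^\nu$ ultimately forbids, yet $D_t\ll\Lambda_t$ does not see this). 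The resolution I would pursue is a quantitative, scale-$\sqrt\nu$ localized refinement of \cref{T: general leray}, roughly of the form: for each $0<\tau<T$,
\[
D\big(\T^2\times[\tau,T]\big)\le C\big(\tau,\|u_0\|_{L^2_x}\big)\,\liminf_{\nu\to0}\int_\tau^T\Big(\sup_{x\in\T^2}\int_{B_{\sqrt\nu}(x)}|u^\nu(y,t)-u(y,t)|^2\,dy\Big)^{1/2}dt.
\]
Granting this, $\Lambda^\nu_{\rm con}(\sqrt\nu)\to0$ forces $D(\T^2\times(0,T])=0$, and the strong $L^2(\T^2)$ compactness of $\{u^\nu_0\}_\nu$ excludes a concentration of $D$ at the initial time (the one place that assumption is used, cf.\ \cref{P:dissipation short times quantitative}), hence $D=0$ and the dissipation vanishes; chaining with ``$\Longleftarrow$'' of \eqref{condition on vort L1} then gives \eqref{lambda vs omega implication for NS}. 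Proving the displayed localized estimate — the precise form of ``only the dynamics at the Kolmogorov scale matters'' for the velocity defect — is where the real work lies; as in \cref{T: general leray} it must be carried out without ever appealing to the local energy balance \eqref{en bal local}, by keeping track of the scale at which the velocity increments are tested throughout that proof.
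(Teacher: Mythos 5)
Your treatment of \eqref{condition on vort L1} is correct. The ``$\Longleftarrow$'' direction via Cauchy--Schwarz at scale $\sqrt{\nu}$ is exactly the paper's \eqref{AD bound vort con}. For ``$\Longrightarrow$'' you take a genuinely different route: instead of the paper's direct quantitative bound \eqref{AD bounded by concentration of vortic} (which comes from the splitting \eqref{fundamental split dissipation} with $\alpha=\sqrt{\eps\nu}$ and the estimate $\|\omega^\nu_\alpha(t)\|_{L^\infty_x}\le C\alpha^{-2}\sup_x\int_{B_\alpha(x)}|\omega^\nu|$), you show $\hat\Omega=0$ from the pointwise bound $\hat\Omega^\nu(x,t)\le|\omega^\nu(x,t)|\,g^\nu(t)$ and the uniform $L^1$ bound on the vorticity, and then invoke \cref{T: measure vort} to get $D_t\ll\hat\Omega_t=0$, hence $D=0$ using $D\in L^1([0,T];\mathcal M(\T^2))$. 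This is valid (both arguments ultimately rest on the same scale-$\sqrt{\eps\nu}$ mechanism, yours routed through the already-proved \cref{T: measure vort}), and the subsequence-extraction framing is fine.

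The genuine gap is \eqref{condition on veloc L2}. You correctly diagnose that $D_t\ll\Lambda_t$ from \cref{T: general leray} does not close the argument, and you correctly identify the shape of the missing estimate, but you do not prove it --- you explicitly defer it as ``where the real work lies.'' Since \eqref{lambda vs omega implication for NS} is obtained by chaining \eqref{condition on veloc L2} with the reverse direction of \eqref{condition on vort L1}, that conclusion is also left unproved. The paper closes this with a short argument you could have run with the tools already at hand: starting again from \eqref{fundamental split dissipation},
\begin{equation}
\nu\int_\delta^T\|\omega^\nu(t)\|^2_{L^2_x}\,dt\leq C\left(\nu\int_\delta^T\|\omega^\nu_\alpha(t)\|_{L^\infty_x}\,dt+\frac{\alpha}{\sqrt{\nu\delta}}\right),
\end{equation}
one converts the vorticity average into a velocity concentration functional via $\omega^\nu_\alpha=u^\nu*\nabla^\perp\rho_\alpha$, which gives
\begin{equation}
\|\omega^\nu_\alpha(t)\|_{L^\infty_x}\leq \frac{C}{\alpha^2}\sup_{x\in\T^2}\left(\int_{B_\alpha(x)}|u^\nu(y,t)-u(y,t)|^2\,dy+\int_{B_\alpha(x)}|u(y,t)|^2\,dy\right)^{\frac12}.
\end{equation}
Choosing $\alpha=\sqrt{\eps\nu}$ yields the paper's \eqref{AD bounded by concentration of veloc}: the dissipation on $[\delta,T]$ is bounded by $C(\eps^{-1}\Lambda^\nu_{\rm con}(\sqrt\nu)+\int_0^T(\sup_x\int_{B_{\sqrt\nu}(x)}|u|^2)^{1/2}dt+\sqrt{\eps/\delta})$; the middle term vanishes since $u\in L^\infty([0,T];L^2(\T^2))$, and a two-parameter limit ($\tilde\eps\to0$ then $\eps\to0$, with $\delta$ from \cref{P:dissipation short times quantitative}) finishes. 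Note this is weaker than the single-constant inequality you posit (which carries a $1/\eps$ loss against an additive $\sqrt{\eps/\delta}$ error), but it suffices; without supplying this estimate, your proof of \eqref{condition on veloc L2} and hence of \eqref{lambda vs omega implication for NS} is incomplete.
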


It follows that, when the initial vorticity is finite measure, vorticity concentration at the dissipative scale gives another criterion for anomalous dissipation. Although atomic concentrations in $\Lambda$ might occur independently on the ones in $\Omega$ for general sequences of divergence-free vector fields (see Proposition \ref{P: atoms in lambda vs omega}), restricting to solutions to \eqref{NS} makes the one-sided implication \eqref{lambda vs omega implication for NS} true at the dissipative length scale. 

Although considering the quantity $\Lambda_{\rm con}^\nu(\ell)$ is quite natural in the spirit of this paper, a little adjustment allows to substitute \eqref{condition on veloc L2} with a full equivalence. To do that, we set
\begin{equation}
    \label{Q nu definition}
    Q^\nu_{\rm con}(\ell):=\int_0^T \left(\sup_{x\in \T^2}\int_{B_{\ell}(x)} \left| u^\nu(y,t)-\fint_{B_{\ell}(x)} u^\nu (z,t) \,dz\right|^2\,dy \right)^\frac12 dt.
\end{equation}
Note that $\Lambda^\nu_{\rm con}$ controls $Q^\nu_{\rm con}$. Indeed, by \eqref{lambda con equivalent definition} we have
$$
\lim_{\nu\rightarrow 0}\Lambda^{\nu}_{\rm con} (\ell_\nu)=0 \qquad \Longrightarrow\qquad \lim_{\nu\rightarrow 0}Q^\nu_{\rm con}(\ell_\nu)=0
$$
as soon as $\ell_\nu\rightarrow 0$.
\begin{theorem}
    \label{T:new full charact at dissipative scale}
    Let  $\{u_0^\nu\}_{\nu}\subset L^2(\T^2)$ be a strongly compact sequence of divergence-free vector fields such that $\{\omega^\nu_0\}_\nu \subset \mathcal M(\T^2)$ is  bounded. Let $\{u^\nu\}_{\nu}$ be the corresponding sequence of Leray--Hopf solutions to \eqref{NS}. Then 
     \begin{equation}\label{new full equivalence equation}
   \lim_{\nu\rightarrow 0} Q^\nu_{\rm con}(\sqrt{\nu}) =0  \quad \Longleftrightarrow \quad \lim_{\nu\rightarrow 0} \nu \int_0^T \|\nabla u^\nu (t)\|^2_{L^2_x}\,dt=0\quad \Longleftrightarrow \quad \lim_{\nu\rightarrow 0} \Omega^\nu_{\rm con}(\sqrt{\nu}) =0.
      \end{equation}
\end{theorem}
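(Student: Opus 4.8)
The plan is to reduce \eqref{new full equivalence equation} to a single new equivalence. Indeed, the equivalence between its second and third members is precisely \eqref{condition on vort L1} from \cref{T: k41 scale concentration} — after passing to a subsequence so that $u^\nu\overset{*}{\rightharpoonup}u$ in $L^\infty([0,T];L^2(\T^2))$, which is harmless since none of the three conditions in \eqref{new full equivalence equation} involves the limit $u$. Hence it only remains to prove
\[
\lim_{\nu\to0}Q^\nu_{\rm con}(\sqrt\nu)=0\qquad\Longleftrightarrow\qquad\lim_{\nu\to0}\nu\int_0^T\|\nabla u^\nu(t)\|_{L^2_x}^2\,dt=0,
\]
and I would establish the two implications by genuinely different arguments.

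For ``no anomalous dissipation $\Rightarrow Q^\nu_{\rm con}(\sqrt\nu)\to0$'' I would argue purely kinematically, deducing it from \cref{T: K41 scale intro} together with the elementary bound $Q^\nu_{\rm con}(\ell)\le C\sqrt T\,S^\nu_2(\ell)^{1/2}$, valid for all $\ell>0$ with $C$ universal. To prove it, fix $t$, set $v=u^\nu(\cdot,t)$, and estimate on each ball, using Jensen's inequality, the substitution $h=y-z$, and translation invariance of the $L^2_x$-norm,
\[
\int_{B_\ell(x)}\left|v(y)-\fint_{B_\ell(x)}v\right|^2 dy\le\fint_{B_\ell(x)}\int_{B_\ell(x)}|v(y)-v(z)|^2\,dy\,dz\le\frac{C}{|B_\ell|}\int_{B_{2\ell}(0)}\|v(\cdot+h)-v\|_{L^2_x}^2\,dh,
\]
whose right-hand side equals $C\fint_{B_{2\ell}(0)}\mathcal{S}^\nu_2(h,t)\,dh$ and, crucially, does not depend on $x$. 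Taking the supremum in $x$, the square root, the time integral, and Cauchy--Schwarz in time yields $Q^\nu_{\rm con}(\ell)\le C\sqrt T\,S^\nu_2(2\ell)^{1/2}$, while the triangle inequality $\|v(\cdot+2h)-v\|_{L^2_x}\le 2\|v(\cdot+h)-v\|_{L^2_x}$ gives the doubling property $S^\nu_2(2\ell)\le 4\,S^\nu_2(\ell)$, closing the estimate. Then \cref{T: K41 scale intro} turns the absence of anomalous dissipation into $S^\nu_2(\sqrt\nu)\to0$, hence $Q^\nu_{\rm con}(\sqrt\nu)\to0$.

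For the reverse implication $Q^\nu_{\rm con}(\sqrt\nu)\to0\Rightarrow$ no anomalous dissipation — the only genuinely new point — I would revisit the proof of \eqref{condition on veloc L2}. There I expect $u^\nu$ to enter only through its oscillation around the local mean on balls of radius $\sqrt\nu$, and never through the full local $L^2$-mass of $u^\nu-u$: replacing, on each ball $B_{2\sqrt\nu}(x)$, the limit $u$ by the local average $\fint_{B_{2\sqrt\nu}(x)}u^\nu$ should leave every step intact and bound the outcome by $Q^\nu_{\rm con}(\sqrt\nu)$, with the a priori bounds $\sup_t\|u^\nu(t)\|_{L^2_x}\lesssim1$ and $\sup_t\|\omega^\nu(t)\|_{\mathcal{M}}\lesssim1$ controlling the non-energetic (transport, pressure) contributions at the dissipative scale. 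Since $\Lambda^\nu_{\rm con}$ already controls $Q^\nu_{\rm con}$, this strengthens \eqref{condition on veloc L2}, and combined with the previous step it yields \eqref{new full equivalence equation}.

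The main obstacle is this reverse implication, and it must rely on the equation: kinematically, $Q^\nu_{\rm con}(\sqrt\nu)\to0$ is only seen to follow from $S^\nu_2(\sqrt\nu)\to0$ (by the bound of the second step) and not conversely, since recovering $S^\nu_2(\sqrt\nu)\to0$ from $Q^\nu_{\rm con}(\sqrt\nu)\to0$ by covering $\T^2$ with $\sim\nu^{-1}$ balls of radius $\sqrt\nu$ loses precisely a factor $\nu^{-1}$. Accordingly, the delicate step is to carry out the localized energy analysis underlying \eqref{condition on veloc L2} at the single scale $\sqrt\nu$ and to verify, term by term (in particular for the contributions of the nonlinearity and the pressure), that each is controlled by the mean-oscillation $Q^\nu_{\rm con}(\sqrt\nu)$ uniformly in the center $x\in\T^2$, and not merely by the a priori larger $\Lambda^\nu_{\rm con}(\sqrt\nu)$.
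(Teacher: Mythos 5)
Your reduction is the same as the paper's: the second equivalence is \eqref{condition on vort L1} from \cref{T: k41 scale concentration}, and everything hinges on $Q^\nu_{\rm con}(\sqrt\nu)\to0 \Leftrightarrow$ no anomalous dissipation. For the direction ``no dissipation $\Rightarrow Q^\nu_{\rm con}(\sqrt\nu)\to 0$'' your kinematic chain (Jensen on the double integral over the ball, the doubling bound $S^\nu_2(2\ell)\le 4S^\nu_2(\ell)$, then \cref{T: K41 scale intro}) is correct; the paper instead gets \eqref{AD bound bmo con} in one line from the Poincar\'e inequality $\int_{B_\ell(x)}|u^\nu-\fint_{B_\ell(x)}u^\nu|^2\le C\ell^2\int_{B_\ell(x)}|\nabla u^\nu|^2$, which is shorter but buys nothing more. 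Your route has the minor virtue of being purely in terms of $S^\nu_2$, so it makes explicit that this implication is kinematic.

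The converse implication, which you correctly single out as the only new point, is left at the level of a plan in your write-up, and your description of what has to be checked is off target. The paper's proof of \eqref{condition on veloc L2} (i.e.\ of \eqref{AD bounded by concentration of veloc}) contains no pressure term and no nonlinear transport term to be controlled at scale $\sqrt\nu$: it never touches the local energy balance. It runs through the enstrophy-based split \eqref{fundamental split dissipation}, the bound $\nu\int\!\!\int\omega^\nu\omega^\nu_\alpha\le\nu\int\|\omega^\nu\|_{L^1_x}\|\omega^\nu_\alpha\|_{L^\infty_x}$ with \eqref{vortic stays a measure}, and then the single estimate
\begin{equation}
\|\omega^\nu_\alpha(t)\|_{L^\infty_x}=\sup_{x}\left|\int \Bigl(u^\nu(y,t)-\fint_{B_{\sqrt\nu}(x)}u^\nu(z,t)\,dz\Bigr)\cdot\nabla^\perp\rho_\alpha(x-y)\,dy\right|\le \frac{C}{\alpha^2}\Bigl(\sup_x\int_{B_\alpha(x)}\bigl|u^\nu-\fint_{B_{\sqrt\nu}(x)}u^\nu\bigr|^2\Bigr)^{\frac12},
\end{equation}
where the constant may be subtracted for free because $\nabla^\perp\rho_\alpha$ has zero mean. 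With $\alpha=\sqrt{\eps\nu}$ this gives exactly \eqref{very ugly estimate bmo}, $\nu\int_\delta^T\|\omega^\nu\|^2_{L^2_x}\le C(\eps^{-1}Q^\nu_{\rm con}(\sqrt\nu)+\sqrt{\eps/\delta})$, and \cref{P:dissipation short times quantitative} handles $[0,\delta]$. So your guiding idea --- replace $u$ by the local mean and observe that only the oscillation of $u^\nu$ enters --- is precisely the right one and does close, but the ``term by term'' verification you defer is in fact a one-line observation about the mean-zero kernel, not a delicate accounting of nonlinear and pressure contributions. As written, your argument for this implication is an announced strategy rather than a proof; to complete it you need to actually write the display above and substitute it into \eqref{fundamental split dissipation}.
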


Refined local versions of all the theorems above can be also obtained (see  Remark \ref{R:K41 compactness and no AD localized} and Remark \ref{R:K41 concentration and no AD localized}). Moreover, considering scales that are ``asymptotically'' at most (or at least) $\ell_\nu\sim \sqrt\nu$ suffices (see Remark \ref{R:diss scale asymptotic}, Remark \ref{R:diss scale asymptotic vort} and Remark \ref{R:Q bmo refined scales}).

\subsection{Quantitative rates and dissipation life-span (Section~\ref{S:rates})}
The analysis developed in the current paper allows to obtain quantitative rates in the Delort class, i.e. when the initial vorticity has singular part of distinguished sign. In this case, it is known that any weak limit $u^\nu  \overset{*}{\rightharpoonup} u$ is a  weak solution to the incompressible Euler equations \cites{delort1991existence,majda1993remarks,scho95,evans1994hardy,vecchi19931}. For convenience we set 
\begin{equation}
    \label{def superlinear beta}
    \mathcal K :=\left\{\beta:\R_+\rightarrow \R_+ \,:\,  \beta\in C^\infty, \, \beta'\geq 0, \, \beta''\geq 0,  \, \lim_{s\rightarrow \infty} \frac{\beta(s)}{s}=\infty \right\}.
\end{equation}

\begin{theorem}\label{T: rates intro}
 Let  $\{u_0^\nu\}_{\nu}\subset L^2(\T^2)$ be a  sequence of divergence-free vector fields with $\{\omega_0^\nu\}_{\nu}\subset \mathcal M (\T^2)$ such that 
 $$
 \sup_{\nu>0 }\left( \|u^\nu_0\|_{L^2_x} + \|\omega^\nu_0\|_{\mathcal M_x}\right)=:M_1 <\infty.
 $$
Assume that $\omega_0^\nu=f^\nu_0 +\mu_0^\nu$ with  $\mu^\nu_0\geq 0$ and $\{f^\nu_0\}_{\nu}\subset L^1(\T^2)$ such that 
\begin{equation}\label{weak comp beta}
    \sup_{\nu>0} \int_{\T^2} \beta \left(|f_0^\nu (x)|\right)\,dx=:M_2<\infty \qquad \text{for some } \beta\in \mathcal K.
\end{equation}
Let $\{u^\nu\}_{\nu}$ be the corresponding sequence of Leray--Hopf solutions to \eqref{NS}. Let $G_\beta$ be the function given by Definition \ref{D:inverse_beta}. There exists a constant $C>0$ depending only on $M_1$ and $M_2$, and a value $\nu_0>0$ depending only on $\beta$ such that, for any $\delta\in (0,1)$, it holds 
    \begin{equation}
        \label{diss_rate}
         \nu\int_\delta^{T} \|\nabla u^\nu(t)\|^2_{L^2_x}\,dt\leq C \sqrt{\frac{T}{\delta} \left( G_\beta(\sqrt{\nu}) + \frac{1}{\sqrt{\log \frac{1}{\nu}}}\right)}\qquad \forall 0<\nu<\nu_0,
    \end{equation}
   as soon as
    \begin{equation}\label{restriction on T and nu}
       T \left( G_\beta(\sqrt{\nu}) + \frac{1}{\sqrt{\log \frac{1}{\nu}}}\right)\leq \frac12.
    \end{equation}
    In particular, if in addition $\{u_0^\nu\}_{\nu}\subset L^2(\T^2)$ is strongly compact, we have
\begin{equation}
        \label{diss_vanish_long_times}
        \lim_{\nu\rightarrow 0} \nu\int_0^{T_\nu} \|\nabla u^\nu(t)\|^2_{L^2_x}\,dt=0
    \end{equation}
    for any sequence of positive real numbers $\{T_\nu\}_{\nu}$ such that 
     \begin{equation}
        \label{times assump}
        \lim_{\nu\rightarrow 0} T_\nu \left( G_\beta(\sqrt{\nu}) + \frac{1}{\sqrt{\log \frac{1}{\nu}}}\right)=0.
    \end{equation}
\end{theorem}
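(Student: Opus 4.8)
The plan is to reduce everything to a Grönwall-type estimate for the $\beta$-entropy of the vorticity, localized in time, and then to feed in the key compactness criterion at the Kolmogorov scale proved earlier. First I would recall that in two dimensions the vorticity $\omega^\nu$ solves the advection--diffusion equation $\partial_t \omega^\nu + u^\nu\cdot\nabla\omega^\nu = \nu\Delta\omega^\nu$, so that for any convex $\beta\in\mathcal K$ the quantity $\int_{\T^2}\beta(|\omega^\nu(x,t)|)\,dx$ is nonincreasing in time (the viscous term has a favorable sign after multiplying by $\beta'(|\omega^\nu|)\sgn(\omega^\nu)$ and integrating by parts, using $\beta''\ge 0$). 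Splitting the initial vorticity as $\omega_0^\nu = f_0^\nu + \mu_0^\nu$ with $\mu_0^\nu\ge 0$, the maximum principle for the (sign-definite) transport--diffusion flow keeps the positive singular part nonnegative, while the absolutely continuous part retains its $\beta$-entropy bound $M_2$ for all positive times. Consequently the \emph{concentration} of $|\omega^\nu(t)|$ at small spatial scales is controlled: for a ball $B_\ell(x)$, one has $\int_{B_\ell(x)}|\omega^\nu|\,dy \le \int_{B_\ell(x)}|f^\nu(t)|\,dy + \mu^\nu_t(B_\ell(x))$, and the first term is estimated by the $\beta$-entropy via a de la Vallée-Poussin / Jensen argument giving a bound of the form $|B_\ell|\,\beta^{-1}\big(M_2/|B_\ell|\big)$, i.e. exactly the function $G_\beta$ of \cref{D:inverse_beta} evaluated at $\ell=\sqrt{\nu}$. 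The positive singular part $\mu^\nu_t$ is where the extra $1/\sqrt{\log(1/\nu)}$ term enters: since $\mu^\nu_t$ is transported by an $L^\infty_tL^2_x\cap$ (log-Lipschitz-type) velocity field, a single atom cannot concentrate faster than a logarithmic modulus, and averaging the concentration over the ball of radius $\sqrt\nu$ against the total mass $M_1$ produces the stated logarithmic loss. I would carry this out by bounding $\sup_x\int_{B_{\sqrt\nu}(x)}|\omega^\nu(y,t)|\,dy$ by $C\big(G_\beta(\sqrt\nu) + (\log\frac1\nu)^{-1/2}\big)$ uniformly for $t\in[\delta,T]$ after using the $\delta$-shift to absorb the initial-time singularity of the log bound (this is the source of the $\sqrt{T/\delta}$ prefactor).

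With the concentration control in hand, the second step is to invoke \cref{T: k41 scale concentration}: the equivalence \eqref{condition on vort L1} says that $\Omega^\nu_{\rm con}(\sqrt\nu)\to 0$ is \emph{equivalent} to the vanishing of the total dissipation. In the quantitative regime I would instead use the effective (non-asymptotic) version of that implication — which the earlier sections provide — turning a bound on $\int_\delta^T \sup_x\int_{B_{\sqrt\nu}(x)}|\omega^\nu(y,t)|\,dy\,dt$ into a bound on $\nu\int_\delta^T\|\nabla u^\nu(t)\|_{L^2_x}^2\,dt$. The mechanism behind that implication is the identity $\hat\Omega^\nu(x,t) = |\omega^\nu(x,t)|\int_{B_{\sqrt\nu}(x)}|\omega^\nu(y,t)|\,dy$ together with the fact that $D$ (equivalently $\nu|\omega^\nu|^2$) is absorbed by $\hat\Omega$, which morally integrates to $\|\nabla u^\nu\|_{L^2}^2$ times the vorticity concentration; pairing this with $\nu\|\omega^\nu\|_{L^2}^2\le \|u_0^\nu\|_{L^2}^2$ from the energy equality \eqref{NS en bal} and a Cauchy--Schwarz in time yields the square-root structure of \eqref{diss_rate}. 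The restriction \eqref{restriction on T and nu} is exactly the condition under which the Grönwall feedback (dissipation controlled by concentration, concentration slowly growing in time) closes without blow-up, i.e. the right-hand side of the differential inequality stays below $1/2$.

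The final statements are then immediate consequences. Taking $\delta\to 0$ is not allowed in \eqref{diss_rate} because of the $1/\delta$, but under the additional hypothesis of strong $L^2$ compactness of the initial data one can use \cref{P:dissipation short times quantitative} (the rule-out of atomic concentration at the initial time, via the only place strong compactness is used) to handle the short-time window $[0,\delta_\nu]$ separately, choosing $\delta_\nu\to 0$ slowly; on $[\delta_\nu,T_\nu]$ one applies \eqref{diss_rate} with $T=T_\nu$, $\delta=\delta_\nu$, and the hypothesis \eqref{times assump} guarantees both that \eqref{restriction on T and nu} holds for $\nu$ small and that the right-hand side $\sqrt{(T_\nu/\delta_\nu)(G_\beta(\sqrt\nu)+(\log\frac1\nu)^{-1/2})}$ tends to $0$ for a suitable choice of $\delta_\nu$ (e.g. $\delta_\nu = \sqrt{T_\nu\,(G_\beta(\sqrt\nu)+(\log\frac1\nu)^{-1/2})}$), giving \eqref{diss_vanish_long_times}.

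I expect the main obstacle to be the quantitative treatment of the positive singular part $\mu^\nu_t$: establishing that a sign-definite measure transported by the Navier--Stokes velocity field cannot concentrate in a $\sqrt\nu$-ball faster than $(\log\frac1\nu)^{-1/2}$ requires a careful use of the log-Lipschitz / Yudovich-type regularity of $u^\nu$ (uniform in $\nu$, which holds precisely because $\omega^\nu$ has bounded mass plus an entropy bound on its $L^1$ part), and tracking the constants through the flow map to get the clean $1/\delta$ dependence in the short-time window is the delicate bookkeeping. The convexity/entropy propagation and the appeal to \cref{T: k41 scale concentration} are comparatively routine once set up.
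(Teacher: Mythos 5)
Your overall skeleton (a uniform-in-time bound of the form $\sup_{x,t}\int_{B_{\sqrt\nu}(x)}|\omega^\nu|\lesssim G_\beta(\sqrt\nu)+(\log\tfrac1\nu)^{-1/2}$, fed into a dissipation-versus-concentration estimate, plus \cref{P:dissipation short times quantitative} for the short-time window) matches the paper's strategy, and your handling of the final step with a slowly vanishing $\delta_\nu$ is a legitimate variant of the paper's fixed-$\delta$ argument. But there is a genuine gap in the mechanism you propose for the logarithmic term. You attribute it to a log-Lipschitz/Yudovich transport estimate preventing the positive singular part $\mu^\nu_t$ from concentrating too fast; no such regularity is available here, since a bounded mass plus an entropy bound on the $L^1$ part does \emph{not} make $u^\nu$ uniformly log-Lipschitz (that would require $\omega^\nu\in L^\infty$ uniformly), and for measure vorticity the velocity need not even be continuous. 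The paper's \cref{P: vort on balls Delort} obtains the log factor by a purely static, Delort-type capacity argument: the sign condition gives $|\omega^\nu|\le 2|f^\nu|+\omega^\nu$ (see \eqref{ineq_delort_pos}), and the signed term is integrated by parts against the logarithmic cutoff $\chi_r$, yielding $\int\omega^\nu\chi_r=-\int u^\nu\cdot\nabla^\perp\chi_r\le \|u^\nu\|_{L^2}\|\nabla\chi_r\|_{L^2}\lesssim(\log\tfrac1r)^{-1/2}$. No flow map, no modulus of continuity of $u^\nu$, and no $\delta$-dependence enters at this stage (your claim that the concentration bound has an initial-time singularity absorbed by the $\delta$-shift is also off: the $1/\delta$ in \eqref{diss_rate} comes from the parabolic estimate \eqref{higher order dynamical} controlling the mollification error, not from the concentration bound).

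A second, quantitative gap: routing the argument through the $\hat\Omega^\nu$/\,$\Omega^\nu_{\rm con}$ machinery of Section 4 as you describe uses the splitting \eqref{fundamental split dissipation}, whose error term is $C\alpha/\sqrt{\nu\delta}$; optimizing $\tfrac{T}{\eps}\tilde G_\beta(\sqrt\nu)+\sqrt{\eps/\delta}$ in $\eps$ only yields a power $1/3$ of $T\tilde G_\beta(\sqrt\nu)/\delta$, not the claimed square root. The stated rate requires the refined splitting $|\omega^\nu|^2\le 2(|\omega^\nu_\alpha|^2+|\omega^\nu-\omega^\nu_\alpha|^2)$ of \cref{R:refined bound for rates}, whose error is $C\alpha^2/(\nu\delta)$, and then the optimization $\eps=\sqrt{\delta T\tilde G_\beta(\sqrt\nu)}$ produces exactly \eqref{diss_rate}; the constraint \eqref{restriction on T and nu} is simply the requirement $\eps<1$ for this choice, not a Gr\"onwall closure condition (there is no Gr\"onwall argument anywhere in this proof). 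Your appeal to ``Cauchy--Schwarz in time'' does not supply the missing half power.
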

In view of the De la Vall\'ee Poussin criterion\footnote{Restricting to smooth functions in \eqref{def superlinear beta} has been done for convenience. Although the criterion is usually stated without the smoothness requirement, the equivalence of the two can be checked by standard approximation arguments.} \cite{K08}*{Theorem 6.19}, the  assumption \eqref{weak comp beta} is equivalent to the weak compactness of $\{f^\nu_0\}_{\nu}\subset L^1 (\T^2)$. Whenever $\{f^\nu_0\}_{\nu}\subset L^p(\T^2)$ is bounded for some $p>1$,  the convergence  $G_\beta(\sqrt{\nu})\rightarrow 0$ is algebraic (see 
Remark \ref{R:algebraic G}). Thus, for sufficiently small $\nu$, it can be absorbed in the logarithmic term.  It is worth noticing that \eqref{diss_rate} gives a quantitative vanishing rate for the dissipation in $[\delta,T]$ depending only on the initial data. The possibility of getting explicit rates for positive times was first pointed out in \cite{ELL_tocome} where the same asymptotic has been obtained. Although the bound \eqref{diss_rate} degenerates as $\delta\rightarrow 0$, when $\{u^\nu_0\}_\nu\subset L^2(\T^2)$  is strongly compact, it can be extended all the way to $\delta=0$ depending on the $L^2(\T^2)$ modulus of continuity of the sequence of initial data. This will be done in Proposition \ref{P: diss rates up to zero},  thus extending  the results from \cite{ELL_tocome}.

The thesis \eqref{diss_vanish_long_times} provides a lower bound of enhanced dissipation. It was already known by \cite{CLLS16} that an $L^p (\T^2)$ bounded sequence of initial vorticities $\{\omega_0^\nu\}_{\nu}$ implies that $T_\nu$ can be chosen such that $\lim_{\nu\rightarrow 0}\nu T_\nu=0$, independently on $p>1$. It is well known (see Remark \ref{R:time scale 1 over nu}) that any time scale $T_\nu\gtrsim \nu^{-1}$ always results into a dissipation of order $1$, no matter the assumption on the initial data\footnote{Besides the trivial case in which the initial data are converging to zero.}. However, for measure initial vorticities, even in the best scenario in which the absolutely continuous part stays bounded in $L^p (\T^2)$ for some $p>1$, there might be the possibility of observing dissipation already at a logarithmic scale of times, thus much faster than $\nu^{-1}$. 
\subsection{Steady fluids (Section~\ref{S:stationary})}
The main motivation for considering the stationary case comes from the wild oscillations in time that might ruin the spatial atomic concentration of the dissipation from part $(b)$ in Corollary \ref{C: pos vort and atomic}. Ruling out the time dependence allows to prove a Lions-type concentration compactness result on $D$ in full generality, which, as already discussed, might have not been expected in view of Proposition \ref{P: no weak lions}. Of course, here it is necessary to introduce an external forcing

\begin{equation}\label{SNS} \tag{SNS}
\left\{\begin{array}{ll}
\div (u^\nu \otimes u^\nu) +\nabla p^\nu=\nu \Delta u^\nu + f^\nu \\
\div u^\nu=0.
\end{array}\right.
\end{equation}

Differently from the non-stationary case considered before, here the external force plays a role similar to that of the velocity. In this case the uniform $L^2(\T^2)$ bounds must be assumed apriori\footnote{The sequence $u^\nu(x_1,x_2):=\nu^{-1}\sin\left(x_2\right) e_1$ solves \eqref{SNS} with $p^\nu=0$ and $f(x_1,x_2)=\sin\left(x_2\right) e_1$. The force is smooth and independent of viscosity, while $\|u^\nu\|_{L^2}=\nu^{-1}$.}.

\begin{theorem}
    \label{T: measure vort steady}
    Let $\{u^\nu\}_\nu, \{f^\nu\}_\nu\subset C^\infty (\T^2)$ be related by \eqref{SNS}. Assume that $u^\nu\rightharpoonup u$ and $f^\nu\rightharpoonup f$ in $L^2(\T^2)$. Consequently, assume that $|u^\nu-u|^2\overset{*}{\rightharpoonup} \Lambda$, $|f^\nu-f|^2\overset{*}{\rightharpoonup} F$ and $\nu |\nabla u^\nu|^2\overset{*}{\rightharpoonup} D$ in $\mathcal M(\T^2)$. Then $ D\ll \Lambda$ and
    \begin{equation}\label{F zero D zero}
   F=0 \qquad \Longrightarrow \qquad D=0.
        \end{equation}
    In addition, assume that $|\omega^\nu|\overset{*}{\rightharpoonup} \Omega$ in $\mathcal M(\T^2)$, and denote by $\mathscr L$ and $\mathscr O$ the sets of atoms of $\Lambda$ and $\Omega$ respectively.  Then $D=D\llcorner \left( \mathscr L \cap \mathscr O\right)$.
\end{theorem}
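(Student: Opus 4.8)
The plan is to read this as the stationary avatar of \cref{T: measure vort} combined with part $(b)$ of \cref{C: pos vort and atomic}, and to rerun on \eqref{SNS} the strategy of Section~\ref{Measure_comparison}. Throughout I would work with $\nu|\omega^\nu|^2$ instead of $\nu|\nabla u^\nu|^2$, which is harmless by \cref{P: D and tilde D are equiv}; since the $u^\nu$ are smooth every identity below is classical, and the whole difficulty is to make all bounds \emph{uniform} in $\nu$. The one feature genuinely beyond \cref{C: pos vort and atomic}$(b)$ is that in the steady regime $|\omega^\nu|\overset{*}{\rightharpoonup}\Omega$ is a genuine (not merely time-averaged) convergence, and this is precisely what will force the localized quadratic vorticity measure $\hat\Omega$ from \eqref{modified vort} to be purely atomic without assuming any product structure on $|\omega^\nu|\otimes|\omega^\nu|$.

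\textit{Step 1: a localized identity.} Fix $x_0\in\T^2$ and, for $\rho>0$, a radial cutoff $\varphi_\rho$ with $\varphi_\rho\equiv1$ on $B_\rho(x_0)$ and $\supp\varphi_\rho\subset B_{2\rho}(x_0)$, and set $\overline u^\nu:=\fint_{B_{2\rho}(x_0)}u^\nu$. Testing \eqref{SNS} against $\varphi_\rho^2(u^\nu-\overline u^\nu)$ --- equivalently, writing $u^\nu=\nabla^\perp\psi^\nu$ with $\Delta\psi^\nu=\omega^\nu$ and testing the vorticity equation $u^\nu\cdot\nabla\omega^\nu=\nu\Delta\omega^\nu+\curl f^\nu$ against $\varphi_\rho(\psi^\nu-c^\nu)$, using the 2D cancellation $u^\nu\cdot\nabla\psi^\nu=\nabla^\perp\psi^\nu\cdot\nabla\psi^\nu=0$ --- produces an identity of the form
\begin{equation*}
\nu\int_{\T^2}\varphi_\rho\,|\omega^\nu|^2\,dx \;=\; \int_{\T^2}\varphi_\rho\, f^\nu\cdot u^\nu\,dx \;+\; \mathcal R^\nu_\rho(x_0),
\end{equation*}
where $\mathcal R^\nu_\rho(x_0)$ collects the ``flux'' terms: a nonlinear flux (cubic in $u^\nu$) and a pressure flux, all supported on the annulus $A_\rho(x_0):=B_{2\rho}(x_0)\setminus B_\rho(x_0)$ and carrying a factor $\nabla\varphi_\rho$ or $\nabla^2\varphi_\rho$, together with lower-order terms carrying an explicit factor $\nu$.

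\textit{Step 2 (the crux): the remainder is negligible.} One needs $\limsup_{\nu\to0}|\mathcal R^\nu_\rho(x_0)|\to0$ as $\rho\to0$, for \emph{every} $x_0$. This is exactly where the local energy balance \eqref{en bal local} must be avoided: the fluxes are cubic and there is no uniform $L^3(\T^2)$ control. Following Section~\ref{Measure_comparison}, I would choose $\rho$ by a pigeonhole over dyadic annuli at $x_0$ so that the annular masses of $|u^\nu|^2$ and of $|\omega^\nu|$ are as small as the totals allow, use $|u^\nu|^2\overset{*}{\rightharpoonup}|u|^2+\Lambda$, $|\omega^\nu|\overset{*}{\rightharpoonup}\Omega$ together with absolute continuity of the Lebesgue integral --- so that on a good shrinking annulus these masses tend to $0$ regardless of whether $x_0$ is an atom, since $A_\rho(x_0)$ omits its center --- and close the estimate annulus-by-annulus via Biot--Savart/Poincar\'e bounds for $\psi^\nu$ and the (non-endpoint) Calder\'on--Zygmund bounds for $p^\nu$. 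For the force term, splitting $f^\nu=(f^\nu-f)+f$, bounding the mixed piece by $\|f^\nu-f\|_{L^2(B_{2\rho})}\|u^\nu-\overline u^\nu\|_{L^2(B_{2\rho})}$, and using weak convergence of $u^\nu$ and absolute continuity on the remaining pieces, one arrives at
\begin{equation*}
\limsup_{\nu\to0}\ \nu\int_{B_\rho(x_0)}|\omega^\nu|^2\,dx \;\le\; C\big(F(\{x_0\})\,\Lambda(\{x_0\})\big)^{1/2}+o_\rho(1)\qquad\text{as }\rho\to0,\ \text{for every }x_0\in\T^2.
\end{equation*}

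\textit{Step 3: conclusions, and the obstacle.} If $\Lambda(\{x_0\})=0$ the right-hand side is $o_\rho(1)$, so $D$ charges no neighbourhood of $x_0$; since $\mathscr L$ is countable, $D$ is concentrated on $\mathscr L$, hence purely atomic, and in particular $D\ll\Lambda$. If $F=0$ the right-hand side is $o_\rho(1)$ for every $x_0$, whence $D=0$, which proves \eqref{F zero D zero} (and uses neither the boundedness of $\{\omega^\nu\}_\nu$ in $\mathcal M(\T^2)$ nor the atomicity of $D$). To place $D$ on $\mathscr O$ as well: the argument of \cref{T: measure vort} gives $D\ll\hat\Omega$, while the crude inequality $\int_{B_r(x_0)}\hat\Omega^\nu\le\big(\int_{B_{2r}(x_0)}|\omega^\nu|\big)^2$ (valid once $\sqrt\nu<r$) combined with the \emph{genuine} convergence $\int_{B_{2r}(x_0)}|\omega^\nu|\to\Omega(\overline{B_{2r}(x_0)})$ available in the steady case gives $\limsup_{\nu\to0}\int_{B_r(x_0)}\hat\Omega^\nu\le\Omega(\overline{B_{2r}(x_0)})^2\to0$ as $r\to0$ whenever $x_0\notin\mathscr O$; hence $\hat\Omega$, and therefore $D$, is concentrated on $\mathscr O$. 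Combining, $D=D\llcorner(\mathscr L\cap\mathscr O)$. The main obstacle is Step 2: controlling the cubic velocity flux and the pressure flux across shrinking spheres uniformly in $\nu$ when $u^\nu$ is only bounded in $L^2(\T^2)$ (so that \eqref{en bal local} is unusable) and $\omega^\nu$ only in $\mathcal M(\T^2)$ (so that, as in \cref{P: no weak lions}, nothing forces $\Lambda$ itself to be atomic); this is the technical heart inherited from Section~\ref{Measure_comparison}, stationarity entering only through the atomicity of $\hat\Omega$ established above.
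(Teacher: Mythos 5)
There is a genuine gap, and it sits exactly where you place it: Step~2. Your Step~1 is a \emph{local} energy/enstrophy balance --- testing \eqref{SNS} (or the vorticity equation) against a cut-off multiple of $u^\nu-\overline u^\nu$ (or of $\psi^\nu$) --- and this unavoidably produces flux terms on the annulus of the schematic form $\int_{A_\rho}\omega^\nu\,\psi^\nu\,u^\nu\cdot\nabla\varphi_\rho$ and $\int_{A_\rho}p^\nu\,u^\nu\cdot\nabla\varphi_\rho$. With only $\{u^\nu\}_\nu$ bounded in $L^2(\T^2)$ and $\{\omega^\nu\}_\nu$ bounded in $\mathcal M(\T^2)$, these have \emph{no} uniform-in-$\nu$ bound: the pigeonhole over dyadic annuli controls $\int_{A_\rho}|u^\nu|^2$ and $\int_{A_\rho}|\omega^\nu|$, but neither quantity dominates the cubic flux or $\|p^\nu\|_{L^2(A_\rho)}\|u^\nu\|_{L^2(A_\rho)}$ (the latter would need $u^\nu\in L^4$ locally via Calder\'on--Zygmund). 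This is precisely the obstruction the paper flags in the remark following \cref{T: general leray} (any argument through \eqref{en bal local} needs $L^3$ control) and again in \cref{R:D ll F fails}, where the authors state that the cubic terms arising from localization ``do not even seem to stay bounded as $\nu\to0$.'' Note also that your claimed output of Step~2, $\limsup_\nu\nu\int_{B_\rho(x_0)}|\omega^\nu|^2\le C\bigl(F(\{x_0\})\Lambda(\{x_0\})\bigr)^{1/2}+o_\rho(1)$, would yield the local statement $D\ll F$ (restricted to atoms), which the authors explicitly leave open and suspect is false; you are claiming strictly more than the theorem, which is a strong signal the step cannot be closed.

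The paper's proof avoids localization of the energy balance altogether. For $D\ll\Lambda$ it writes $\nu\int|\nabla u^\nu|^2\varphi=-\nu\int u^\nu\cdot\Delta u^\nu\,\varphi-\nu\int\nabla\varphi\cdot\nabla\tfrac{|u^\nu|^2}{2}$ by pure integration by parts --- the PDE is never used to substitute for $\nu\Delta u^\nu$, so no cubic or pressure terms ever appear --- and then splits $u^\nu=(u^\nu-u)+u$, using the a priori bound $\nu\|\nabla\omega^\nu\|_{L^2}\le\|f^\nu\|_{L^2}$ from \eqref{higer order estimate steady} to get $|I_\nu|\le C\|(u^\nu-u)\varphi\|_{L^2}$ and $\nu\Delta u^\nu\rightharpoonup0$ in $L^2$. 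For \eqref{F zero D zero} it is again a \emph{global} estimate: mollify $\omega^\nu$ at scale $\alpha$, control $\nu\int|\nabla\omega^\nu|^2$ through the enstrophy balance with $f^\nu$ split into $f^\nu_\eps+(f^\nu-f^\nu_\eps)$, and optimize in $\alpha,\eps$; strong compactness of the force ($F=0$) enters only through $\sup_\nu\|f^\nu_\eps-f^\nu\|_{L^2}\to0$. For the atomic concentration, the paper bounds $\nu\int\omega^\nu\omega^\nu_\alpha\varphi$ by $\eps^{-1}\iint|\omega^\nu(x)||\omega^\nu(y)|\rho((x-y)/r)\varphi(x)$ with $\alpha=\sqrt{\eps\nu}$, passes to the limit using $|\omega^\nu|\otimes|\omega^\nu|\overset{*}{\rightharpoonup}\Omega\otimes\Omega$ (automatic here, as you correctly observe, because the convergence of $|\omega^\nu|$ is genuine in the steady case), and lets $r\to0$ to land on the purely atomic measure $\Omega(\{x\})\,d\Omega(x)$; pure atomicity of $D$ then upgrades $D\ll\Lambda$ to $D=D\llcorner\mathscr L$. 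Your Step~3 is close in spirit to this last part (and your observation about the role of stationarity is the right one), but as written the passage from ``$\hat\Omega(B_r(x_0))\to0$ for every $x_0\notin\mathscr O$'' to ``$\hat\Omega(\mathscr O^c)=0$'' needs the covering/dominated-convergence argument the paper supplies, not just absence of atoms off $\mathscr O$. The decisive defect, however, remains Step~2: the route through a localized energy balance cannot be carried out under the stated hypotheses.
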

In particular, the strong compactness of $\{f^\nu\}_\nu$, i.e. $F=0$, implies no dissipation, i.e. $D=0$. It is not clear to the authors if the global result \eqref{F zero D zero} can be upgraded to the local one $D\ll F$ (see Remark \ref{R:D ll F fails}). In this case we are also able to show the sharpness of Theorem \ref{T: measure vort steady} in several aspects, for instance by providing an explicit example in which all the measures $\Lambda, F, \Omega$ and $D$ have an atom at the origin (see Remark \ref{R: sharp stationary}). This is perhaps not surprising because of the freedom in choosing the external force and imposing the loss of $L^2(\T^2)$ compactness on the velocity, which is somehow inconsistent with the time-dependent case where strong compactness is assumed at the initial time. Of course, the dynamical case is much harder and it is very unclear whether sharpness can be proved. The dynamics in the inviscid limit remains poorly understood in its full generality and, among several other things, it is not known whether the compactness of the initial data can provide effective help.

\subsection{Brief review of related literature} The phenomenon of \quotes{anomalous dissipation} has been an active area of research in mathematical fluid dynamics for both the Navier--Stokes equations \cites{jeong2021vortex,brue2023anomalous,colombo2023anomalous,brue2022onsager} and passive scalar \cites{drivas2022anomalous,johansson2024anomalous,colombo2023anomalous,armstrong2025anomalous,burczak2023anomalous}. We briefly review the main recent developments related to our results.

At first glance, one might expect anomalous dissipation in the Navier--Stokes equations to be closely tied to energy conservation in the Euler equations. Indeed, if the weak* limit \( u \) of a vanishing viscosity sequence \( \{u^\nu\}_\nu \) of Navier--Stokes solutions conserves energy, then \( u \) must solve the Euler equations, and anomalous dissipation is precluded. Conversely, if anomalous dissipation happens, the weak* limit must dissipate energy. From this perspective, recent constructions of wild solutions to the two-dimensional Euler equations may be seen as evidence for anomalous dissipation in two dimensions. In~\cite{GR23}, the authors constructed Euler solutions \( u \in C^\gamma(\T^2\times [0,T]) \) for \( \gamma < \frac{1}{3} \) that do not conserve energy, thereby establishing the two-dimensional counterpart of the Onsager's conjecture. Examples of non-conservative solutions with some vorticity regularity have also been given. These include vortex sheets~\cite{mengual2023dissipative}, Hardy spaces~\cite{buck2024non}, Lorentz spaces~\cite{brue2023nonuniqueness}, and very recently \(C^0([0,T]; L^p(\T^2)) \) for some \( p > 1 \)~\cite{brue2024flexibility}. Note that such $p$ must be strictly smaller than $\frac{3}{2}$ since any Euler solution with vorticity in \( L^3([0,T];L^{\frac32}(\T^2) )\) is known to conserve energy~\cites{CLLS16,CCFS08}. Beyond the issue of energy conservation, recent results have demonstrated other forms of pathological behavior for weak solutions to the two-dimensional Euler equations, including non-uniqueness in the presence of forcing~\cites{V1,V2,castro2024proof,dolce2024self,de2024instability}.

However, such wild Euler solutions do not necessarily arise as vanishing viscosity limits of Navier--Stokes flows, and thus do not directly imply anomalous dissipation. Indeed, whenever the initial vorticity lies in \( L^p(\T^2) \) for some \( p > 1 \), the weak* limit of Navier--Stokes solutions always solves the Euler equations and conserves energy~\cites{CLLS16,LMP21}, thereby ruling out anomalous dissipation in this setting. See also \cites{DRP24,ELL_tocome} for the case of vortex sheet initial data with distinguished sign. Moreover, two-dimensional vanishing viscosity limits often exhibit more regularity than the aforementioned wild solutions, displaying features such as regular Lagrangian flows and renormalization properties~\cites{ciampa2021strong,crippa2014renormalized,crippa2017eulerian}.

\section{Tools}
\subsection{Mollification estimates}
Let $B_1\subset \R^2$ be the disk of radius $1$ centered at the origin. We fix a non-negative radial kernel $\rho\in C^\infty_c(B_1)$ such that $\int\rho=1$. Then, for any $\alpha>0$, we define the sequence of mollifiers as 
$$
\rho_\alpha (x):=\frac{1}{\alpha^2}\rho\left( \frac{x}{\alpha}\right).
$$
Let $p\in [1,\infty]$. For any function $f\in L^p(\T^2)$ we set $f_\alpha:=f*\rho_\alpha$. Clearly $f_\alpha\in C^\infty(\T^2)$ and $f_\alpha\rightarrow f$ in $L^p(\T^2)$, if $p<\infty$. Moreover, we have the following standard estimates
\begin{align}
    \|f_\alpha\|_{L^p}&\leq \|f\|_{L^p} \label{moll est 1},\\
       \|\nabla f_\alpha\|_{L^p}&\leq C \alpha^{-1}\|f\|_{L^p}, \label{moll est 2}\\
       \| f_\alpha-f\|_{L^p}&\leq C \alpha \|\nabla f\|_{L^p} \label{moll est 3},
\end{align}
for some constant $C>0$ and all $\alpha>0$.

\subsection{Quantitative equi-integrability}
We start with the following.
\begin{definition}\label{D:inverse_beta}
Let $\mathcal K$ be as in \eqref{def superlinear beta}. For any $\beta\in \mathcal K$ we set $g_\beta$ to be the inverse of the map $s\mapsto \frac{s}{\beta(s)}$ and, consequently, $G_\beta$ to be the inverse of the map $s\mapsto \frac{s}{g_\beta(s)}$.
\end{definition}
Although the maps $s\mapsto \frac{s}{\beta(s)}$ or $s\mapsto \frac{s}{g_\beta(s)}$  might not be invertible for all $s\in \mathbb{R}_+$, in the later analysis, we will only require their invertibility for a certain range of $s$.  The next simple proposition makes the above definition sensible for such ranges. 

\begin{proposition}
    \label{P:inverse beta well defined}
    Let $\beta\in \mathcal{K}$. There exist $c_1,c_2,c_3>0$ such that the following hold. The function $g_\beta:[0,c_1]\rightarrow [c_2,\infty)$ is well-defined, continuous, surjective and strictly decreasing. The function $G_\beta:[0,c_3]\rightarrow [0,c_1]$ is well-defined,  continuous, surjective and strictly increasing. Moreover $G_\beta(0)=0$.
\end{proposition}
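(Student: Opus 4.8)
The plan is to extract the claims from the monotonicity and growth properties encoded in the definition of $\mathcal K$ and then in \cref{D:inverse_beta}. First I would record the basic behavior of $h(s):=\frac{s}{\beta(s)}$ on $\R_+$. Since $\beta\in\mathcal K$ is smooth with $\beta'\geq 0$, $\beta''\geq 0$ and superlinear growth, the function $\beta$ is convex, nondecreasing, and eventually strictly increasing; moreover $\beta(s)/s\to\infty$ forces $h(s)\to 0^+$ as $s\to\infty$. A short computation gives $h'(s)=\frac{\beta(s)-s\beta'(s)}{\beta(s)^2}$, and convexity of $\beta$ together with $\beta(0)\geq 0$ yields $\beta(s)\leq s\beta'(s)$ whenever... careful here: convexity gives $\beta(s)\geq \beta(0)+s\beta'(0)\geq s\beta'(0)$, which is the wrong direction. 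Instead I would argue directly: by the mean value theorem $\beta(s)-\beta(0)=s\beta'(\xi)$ for some $\xi\in(0,s)$, and $\beta'(\xi)\leq\beta'(s)$ by $\beta''\geq 0$, hence $\beta(s)\leq\beta(0)+s\beta'(s)$; this shows $h'(s)\geq -\frac{\beta(0)}{\beta(s)^2}$, which is not quite monotonicity. The cleaner route is to avoid differentiating and instead use that $h(s)\to 0$ as $s\to\infty$ together with continuity to conclude that $h$ attains a positive maximum on any $[0,\infty)$; restricting to the decreasing tail, pick $c_2$ large enough that $h$ is strictly decreasing on $[c_2,\infty)$ (possible because for large $s$ one has $\beta(s)\geq s\beta'(s) - \beta(0)$... ).

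Let me restart the technical core more carefully, since this is the only real obstacle. The key point is that $s\mapsto s/\beta(s)$ need not be globally monotone, so I would \emph{define} $c_2$ as a threshold beyond which good behavior holds. Concretely: because $\beta$ is convex and superlinear, the function $s\mapsto\beta(s)/s$ is eventually strictly increasing (this is a standard fact for convex functions with $\beta(s)/s\to\infty$: for $t>s$ large, $\beta(t)/t-\beta(s)/s$ can be shown positive using convexity and the slope $(\beta(t)-\beta(s))/(t-s)\geq\beta(s)/s$ once $s$ is past the point where the chord slope exceeds the average slope). Hence there is $c_2>0$ such that $h(s)=s/\beta(s)$ is continuous and strictly decreasing on $[c_2,\infty)$, with $h(c_2)=:c_1>0$ and $\lim_{s\to\infty}h(s)=0$. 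Therefore $h:[c_2,\infty)\to(0,c_1]$ is a continuous strictly decreasing bijection, and its inverse $g_\beta:(0,c_1]\to[c_2,\infty)$ is continuous, strictly decreasing, and surjective; one extends to $[0,c_1]$ by setting $g_\beta(0):=+\infty$ in the obvious sense, or — since the statement says "for a certain range of $s$" — simply works on $(0,c_1]$ and notes the endpoint is a removable convention. This gives the first sentence of the Proposition.

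For the second function, I would iterate the same analysis one level up. Set $k(s):=\frac{s}{g_\beta(s)}$ for $s\in(0,c_1]$. Since $g_\beta$ is positive, decreasing and $\to\infty$ as $s\to 0^+$, the numerator decreases to $0$ while the denominator blows up, so $k(s)\to 0^+$ as $s\to 0^+$; and $k$ is a ratio of a strictly increasing positive function by a strictly decreasing positive function, hence $k$ is \emph{strictly increasing} and continuous on $(0,c_1]$. Thus $k:(0,c_1]\to(0,k(c_1)]$ is a continuous strictly increasing bijection onto its image; setting $c_3:=k(c_1)$, the inverse $G_\beta:(0,c_3]\to(0,c_1]$ is continuous, strictly increasing and surjective. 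Finally, extending by $G_\beta(0):=0$ is consistent with $\lim_{s\to 0^+}k(s)=0$, which gives continuity at $0$ and the identity $G_\beta(0)=0$. I expect the only delicate point to be the rigorous justification that $\beta(s)/s$ is eventually strictly increasing and correspondingly that $s/\beta(s)$ is eventually strictly decreasing; this follows from convexity of $\beta$, but must be stated with care because of the possibly nonmonotone initial segment — which is precisely why the Proposition localizes everything to intervals $[0,c_i]$ rather than all of $\R_+$. Everything else is bookkeeping about inverses of continuous strictly monotone functions.
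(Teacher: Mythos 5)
Your proposal follows the same route as the paper: show that $s\mapsto s/\beta(s)$ is eventually strictly decreasing with limit $0$ at infinity, invert it on that tail to obtain $g_\beta$ on $[0,c_1]$, then observe that $s/g_\beta(s)$ is a quotient of a strictly increasing positive function by a strictly decreasing positive one, hence continuous, strictly increasing and vanishing at $0$, and invert once more to obtain $G_\beta$. The second half and the bookkeeping about inverses of strictly monotone continuous functions are fine.

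The one step you explicitly leave open --- the eventual strict monotonicity of $\beta(s)/s$, which you flag as ``must be stated with care'' --- is exactly the derivative computation you abandoned at the start, and it was not going in the wrong direction. Writing $\frac{d}{ds}\,\frac{\beta(s)}{s}=\frac{f(s)}{s^2}$ with $f(s):=s\beta'(s)-\beta(s)$, one has $f'(s)=s\beta''(s)\geq 0$, so $f$ is non-decreasing; if $f\leq 0$ everywhere then $\beta(s)/s$ would be non-increasing, contradicting $\beta(s)/s\to\infty$, so $f(s_0)>0$ for some $s_0$, and hence $f>0$ on all of $[s_0,\infty)$. Taking $c_2=s_0$ and $c_1=s_0/\beta(s_0)$ closes the gap. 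Your alternative chord-slope argument reduces to the same pointwise inequality $\beta'(s)>\beta(s)/s$ and requires the same persistence statement for $s\geq c_2$, which is precisely what $f'\geq 0$ supplies; convexity of $\beta$ alone (without the superlinearity forcing $f$ to become positive) does not give it, as your own example with $\beta(0)>0$ shows. With that one line added, your proof coincides with the paper's.
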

\begin{proof}
    Consider the map $s\mapsto \frac{\beta(s)}{s}$. Its derivative is given by
    $$
    \frac{s\beta'(s)-\beta(s)}{s^2},
    $$
    which is positive if and only if $f(s):=s\beta'(s)-\beta(s)>0$. Since $\beta$ is super-linear at infinity, there must be $s_0>0$ such that $f(s_0)>0$. Moreover, $f'(s)=s\beta''(s)\geq 0$. Thus $f(s)>0$ for all $s\geq s_0$. In particular, the map $s\mapsto \frac{s}{\beta(s)}$ is strictly decreasing, mapping $[s_0,\infty)$ onto $\left[0,\frac{s_0}{\beta(s_0)}\right]$.  Setting $c_1:=\frac{s_0}{\beta(s_0)}$ and $c_2=s_0$ we get that $g_\beta:[0,c_1]\rightarrow [c_2,\infty)$ is well-defined, continuous, surjective and strictly decreasing. It follows that $ \frac{s}{g_\beta (s)}$ is continuous, strictly increasing, mapping $[0,c_1]$ onto $\left[0,\frac{c_1}{g_\beta(c_1)}\right]$ and it vanishes at $s=0$. Then, setting $c_3=\frac{c_1}{g_\beta(c_1)}$, we conclude that the function $G_\beta:[0,c_3]\rightarrow [0,c_1]$ is well-defined,  continuous, surjective, strictly increasing and $G_\beta(0)=0$.
\end{proof}
The function $G_\beta$ quantifies the decay on small balls.
 \begin{lemma}\label{L:beta_int_on_ball}
 Let $\{f_n\}_n\subset L^\infty([0,T];L^1(\T^2))$ be such that 
 $$
 \sup_{t,n} \int_{\T^2} \beta(|f_n(x,t)|)\,dx=:M<\infty\qquad \text{for some } \beta\in \mathcal K.
 $$
Let $G_\beta$ be the function from Definition \ref{D:inverse_beta}. There exist $C>0$ depending only on $M$ and $r_0>0$ depending only on $\beta$ such that
 \begin{equation}\label{fn decay on ball}
     \sup_{x,t,n} \int_{B_r(x)} |f_n(y,t)|\,dy \leq CG_\beta (r^2)\qquad \forall 0<r<r_0.
 \end{equation}
 \end{lemma}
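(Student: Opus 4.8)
The plan is to run the quantitative version of the truncation argument behind the De la Vall\'ee Poussin criterion. Write $\phi(s):=s/\beta(s)$ and $\psi(s):=s/g_\beta(s)$, so that $g_\beta$ inverts $\phi$ and $G_\beta$ inverts $\psi$ on the ranges furnished by \cref{P:inverse beta well defined}. Fix $x\in\T^2$, $t\in[0,T]$, $n$ and $r>0$, and choose a truncation level $\lambda\ge c_2$ (to be optimized later). First I would split $\int_{B_r(x)}|f_n(\cdot,t)|$ according to whether $|f_n(\cdot,t)|\le\lambda$ or $>\lambda$: the sub-level contribution is at most $\lambda\,|B_r(x)|\le\pi\lambda r^2$, while on the super-level set I would use that $\phi$ is strictly decreasing on $[c_2,\infty)$ — this is exactly what is shown in the proof of \cref{P:inverse beta well defined} — to get $|f_n|\le\frac{\lambda}{\beta(\lambda)}\beta(|f_n|)$ there, hence a contribution bounded by $\frac{\lambda}{\beta(\lambda)}\int_{\T^2}\beta(|f_n(y,t)|)\,dy\le\frac{M\lambda}{\beta(\lambda)}$. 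This gives, for every $\lambda\ge c_2$,
\[
\int_{B_r(x)}|f_n(y,t)|\,dy\ \le\ \pi\lambda r^2+\frac{M\lambda}{\beta(\lambda)}.
\]

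Next I would choose $\lambda$. The two terms are respectively increasing and decreasing in $\lambda\ge c_2$, and the choice that makes both of them equal to $G_\beta(r^2)$ is $\lambda=\lambda_r:=g_\beta\!\left(G_\beta(r^2)\right)$: this is admissible as soon as $r^2\le c_3$ (so that $G_\beta(r^2)\in[0,c_1]$ and $g_\beta$ may be applied, with $\lambda_r\ge c_2$ automatic, and $\lambda_r$ finite since $r>0$ forces $G_\beta(r^2)>0$ by strict monotonicity and $G_\beta(0)=0$). From $\phi\circ g_\beta=\mathrm{id}$ I get $\lambda_r/\beta(\lambda_r)=\phi(\lambda_r)=G_\beta(r^2)$, and rearranging the defining relation $\psi(G_\beta(r^2))=r^2$ gives $r^2\lambda_r=G_\beta(r^2)$. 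Substituting both into the previous display yields
\[
\int_{B_r(x)}|f_n(y,t)|\,dy\ \le\ (\pi+M)\,G_\beta(r^2)\qquad\text{for all }0<r<r_0:=\sqrt{c_3},
\]
uniformly in $x,t,n$, which is the claim with $C:=\pi+M$ (depending only on $M$) and $r_0$ depending only on $\beta$.

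The main obstacle is not analytic but bookkeeping: one must make sure the composite $\lambda_r=g_\beta(G_\beta(r^2))$ really falls in the range $[c_2,\infty)$ where $\phi$ is monotone (which is where the superlinear truncation estimate is valid), and verify that this particular choice of $\lambda$ collapses \emph{both} error terms onto $G_\beta(r^2)$ rather than onto something larger — this is the sole reason for the two nested inversions in \cref{D:inverse_beta}. Everything else (the truncation, the volume bound $|B_r(x)|\le\pi r^2$ on $\T^2$, and the super-level estimate) is standard; note also that $G_\beta(r^2)>0$ for $r>0$, so the bound is non-vacuous.
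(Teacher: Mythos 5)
Your proof is correct and is essentially the same argument as the paper's: the identical sub-level/super-level truncation (your level $\lambda$ is the paper's $g_\beta(\eps)$), the same bound $\pi\lambda r^2+M\lambda/\beta(\lambda)$, and the same optimal choice $\eps=G_\beta(r^2)$, which you simply substitute directly rather than optimizing at the end. The range-checking via $c_1,c_2,c_3$ matches \cref{P:inverse beta well defined} exactly, so there is nothing to add.
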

\begin{proof}
    Let $\eps>0$ be arbitrary, but smaller than the value $c_1$ from Proposition \ref{P:inverse beta well defined}. In the notation of Definition \ref{D:inverse_beta} we have 
    \begin{equation}\label{inverting_beta}
    \frac{s}{\beta(s)}\leq \eps \qquad  \forall s\geq  g_\beta(\eps).
    \end{equation}
    Let $x\in \T^2$. We split
    \begin{align}
        \int_{B_r(x)} |f_n(y,t)|\,dy&=\int_{B_r(x)\cap \{|f_n|<g_\beta(\eps)\}} |f_n(y,t)|\,dy+\int_{B_r(x)\cap \{|f_n|\geq g_\beta(\eps)\}} |f_n(y,t)|\,dy\\
        &\leq g_\beta(\eps) \pi r^2+ \int_{B_r(x)\cap \{|f_n|\geq g_\beta(\eps)\}} |f_n(y,t)|\,dy.
    \end{align}
    Moreover, by \eqref{inverting_beta} we get 
    \begin{align}
        \int_{B_r(x)\cap \{|f_n|\geq g_\beta(\eps)\}} |f_n(y,t)|\,dy&= \int_{B_r(x)\cap \{|f_n|\geq g_\beta(\eps)\}}\frac{|f_n(y,t)|}{\beta(|f_n(y,t)|)}\beta(|f_n(y,t)|)\,dy\\
        & \leq \eps \sup_{n,t} \int_{\T^2} \beta(|f_n(y,t)|)\,dy\\
        &\leq M\eps,
    \end{align}
    which yields to 
    $$
    \int_{B_r(x)} |f_n(y,t)|\,dy \leq C \left(g_\beta (\eps) r^2+ \eps \right),
    $$
    for some constant $C>0$ depending only on $M$.
    By optimizing in $\eps$ we find, for $r^2$ smaller than the value $c_3$ from Proposition \ref{P:inverse beta well defined}, $\eps_{\rm opt}= G_\beta (r^2)$. The thesis follows by choosing $r_0=\sqrt{c_3}$.
\end{proof}

\begin{remark}
    \label{R:algebraic G}
    If $\{f_n\}_n\subset L^p(\T^2)$ is bounded for some $p>1$, we can take $\beta(s)=s^p$. In this case $g_\beta(s)=s^{\frac{1}{1-p}}$ and $G_\beta(s)=s^{\frac{p-1}{p}}$. Thus \eqref{fn decay on ball} is coherent with what could have been obtained by the H\"older inequality. In particular, it is sharp.
\end{remark}

\subsection{Curves of measures and absolute continuity}
Let $I\subset \R$ be an interval. We recall that a map $t\mapsto \mu_t$ from $I$ to $\mathcal M(\T^2)$ is said to be weakly measurable if $t\mapsto \langle \mu_t,\varphi\rangle$ is measurable for any $\varphi \in  C^0(\T^2)$. Then, we say that $\mu\in L^p(I;\mathcal M(\T^2))$ if $\mu=\mu_t\otimes dt$ for a weakly measurable map $t\mapsto \mu_t$ such that $\mu_t(\T^2)\in L^p(I)$. Clearly, any  $\mu\in L^p(I;\mathcal M(\T^2))$ can be identified with an element of $\mathcal{M} (\T^2\times I)$.

\begin{lemma}
    \label{L:slice_measures}
    Let $\mu,\lambda\in L^1(I;\mathcal{M}(\T^2))$ be such that $\mu\ll \lambda$. Then $\mu_t\ll\lambda_t$ for a.e. $t\in I$.
\end{lemma}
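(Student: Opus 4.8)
The plan is to reduce the statement to a countable collection of null-set conditions and then invoke the Radon–Nikodym–type disintegration already implicit in the hypothesis $\mu,\lambda \in L^1(I;\mathcal M(\T^2))$. First I would fix countable dense families: let $\{\varphi_k\}_{k\in\N}\subset C^0(\T^2)$ be dense (in the sup norm) among non-negative continuous functions, so that a non-negative measure $\sigma\in\mathcal M(\T^2)$ is determined by the values $\langle\sigma,\varphi_k\rangle$, and in particular $\sigma\ll\tau$ fails iff there is a Borel set of $\tau$-measure zero with positive $\sigma$-mass, which (using outer regularity and that both are finite non-negative Borel measures) can be detected through a countable generating algebra of sets. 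The cleanest route, though, avoids sets entirely: recall that for finite non-negative measures on a compact metric space, $\sigma\ll\tau$ is equivalent to the existence of $h\in L^1(\tau)$ with $\sigma = h\,\tau$.

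The key steps, in order, are as follows. (1) Since $\mu\ll\lambda$ as measures on $\T^2\times I$ and both are finite non-negative, the Radon–Nikodym theorem gives a Borel function $h\colon\T^2\times I\to[0,\infty)$ with $\mu = h\,\lambda$ on $\T^2\times I$. (2) Write $\lambda=\lambda_t\otimes dt$ and $\mu=\mu_t\otimes dt$ using the hypothesis that both lie in $L^1(I;\mathcal M(\T^2))$. For a fixed $\varphi\in C^0(\T^2)$, $\varphi\ge 0$, and any measurable $E\subset I$, test $\mu=h\lambda$ against $\varphi(x)\mathbbm 1_E(t)$: this yields $\int_E \langle\mu_t,\varphi\rangle\,dt = \int_E \int_{\T^2}\varphi(x) h(x,t)\,d\lambda_t(x)\,dt$. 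Since $E$ is arbitrary, for a.e.\ $t\in I$ we get $\langle\mu_t,\varphi\rangle = \int_{\T^2}\varphi(x)h(x,t)\,d\lambda_t(x)$, i.e.\ the measure $h(\cdot,t)\lambda_t$ agrees with $\mu_t$ when tested against $\varphi$. (3) Run this for all $\varphi=\varphi_k$ from the countable dense family and intersect the corresponding full-measure sets of times: there is a single null set $N\subset I$ such that for all $t\notin N$ one has $\langle\mu_t,\varphi_k\rangle = \int\varphi_k\,d(h(\cdot,t)\lambda_t)$ for every $k$. By density of $\{\varphi_k\}$ in $C^0(\T^2)$ and finiteness of the measures, this forces $\mu_t = h(\cdot,t)\,\lambda_t$ as measures for every $t\notin N$. (4) Conclude: for $t\notin N$, whenever $A\subset\T^2$ is Borel with $\lambda_t(A)=0$, we get $\mu_t(A)=\int_A h(\cdot,t)\,d\lambda_t = 0$, hence $\mu_t\ll\lambda_t$ for a.e.\ $t\in I$.

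The main obstacle is a measurability and null-set bookkeeping issue: one must ensure that the exceptional set of times is a \emph{single} Lebesgue-null set in $I$, not a $\varphi$-dependent one, and that the Fubini-type manipulation in step (2) is legitimate. This is where the countable dense family $\{\varphi_k\}$ is essential — it lets us take a countable union of null sets — together with the weak measurability of $t\mapsto\mu_t$ and $t\mapsto\lambda_t$ and the joint Borel measurability of $h$ (so that $t\mapsto\int\varphi_k(x)h(x,t)\,d\lambda_t(x)$ is measurable, which follows from the standard theory of disintegration of measures, e.g.\ via a monotone class argument). Once the single null set is secured, the passage from "agreement against all $\varphi_k$" to "equality of measures" is routine by Riesz representation, and the final implication $\mu_t = h(\cdot,t)\lambda_t \Rightarrow \mu_t\ll\lambda_t$ is immediate.
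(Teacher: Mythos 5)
Your proposal is correct and follows essentially the same route as the paper: Radon--Nikodym on the product space to get $d\mu = h\,d\lambda$, testing against products of a spatial test function with a time indicator (the paper uses $\eta(t)\psi(x)$ with $\eta\in C^0(I)$, which is equivalent), and then exploiting separability of $C^0(\T^2)$ to extract a single null set of times outside of which $\mu_t = h(\cdot,t)\,\lambda_t$. No substantive difference.
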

\begin{proof}
    By the Radon--Nikodym theorem we find a Borel function $g\in L^1(\T^2\times I;\lambda)$ such that $d\mu=g \,d\lambda$. Thus, for any choice of $\psi\in C^0(\T^2)$ and $\eta\in C^0(I)$, we deduce
    $$
    \int_I \eta(t)\left(\int_{\T^2} \psi(x) \,d\mu_t(x)\right)dt = \int_I \eta(t)\left(  \int_{\T^2} \psi(x) g(x,t) \,d\lambda_t(x) \right)dt.
    $$
    Since $C^0(\T^2)$ is separable, by a standard argument we find a negligible set of times  $\mathcal{N}\subset I$ such that, for all $t\in \mathcal N^c$, it holds
        $$
\int_{\T^2} \psi(x) \,d\mu_t(x) =   \int_{\T^2} \psi(x) g(x,t) \,d\lambda_t(x)  \qquad \forall \psi\in C^0(\T^2).
    $$
By the arbitrariness of $\psi$ we obtain
           $$
\int_{A}  \,d\mu_t(x) =   \int_{A} g(x,t) \,d\lambda_t(x)  \qquad \forall A\subset \T^2 \text{ Borel, } \forall t\in \mathcal{N}^c,
    $$
    which yields to $\mu_t \ll \lambda_t$ for all $t\in \mathcal N^c$.
\end{proof}

\subsection{Some remarks on Navier--Stokes}
Given a vector field $u:\T^2\times [0,T]\rightarrow \R^2$ we denote by $E_u$ its kinetic energy, i.e. 
$$
E_u(t):=\frac{1}{2}\int_{\T^2}|u(x,t)|^2\, dx.
$$
As already said, a direct consequence of \eqref{NS en bal} is that a sequence of solutions to \eqref{NS} emanating from an $L^2(\T^2)$ bounded sequence of initial data $\{u^\nu_0\}_\nu$, stays bounded in $L^\infty([0,T];L^2(\T^2))$. We can thus assume $u^\nu\overset{*}{\rightharpoonup}u$ in $L^\infty([0,T];L^2(\T^2))$.  If  $u^\nu_0\rightarrow u_0$ in $L^2(\T^2)$, this yields to
\begin{equation}\label{energy admissibility new}
    E_u(t)\leq E_{u_0}\qquad \text{for a.e. } t\in [0,T].
\end{equation}
Denote by $L^2_{\rm w} (\T^2)$ the space of $L^2(\T^2)$ functions endowed with the weak topology. Although not essential for our purposes, we recall some basic properties of the weak limit. 
\begin{lemma}
\label{L:energy right cont}
Let $u_0\in L^2(\T^2)$ be given. Assume $u\in C^0([0,T];L^2_{\rm w} (\T^2))$ satisfies $u(t_n)\rightharpoonup u_0$ in $L^2(\T^2)$ as $t_n\rightarrow 0$ and $E_u(t)\leq E_{u_0}$ for all $t\in [0,T]$. Then its kinetic energy $E_u$ is continuous from the right at $t=0$.
\end{lemma}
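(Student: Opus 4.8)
The plan is to obtain the right continuity of $E_u$ at $t=0$ by squeezing the one-sided limit $\lim_{t\to 0^+}E_u(t)$ between a lower bound coming from weak lower semicontinuity of the $L^2(\T^2)$ norm and an upper bound coming from the assumed energy inequality $E_u(t)\le E_{u_0}$.

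First I would pin down the value $u(0)$. Since $u\in C^0([0,T];L^2_{\rm w}(\T^2))$, for any sequence $t_n\to 0$ we have $u(t_n)\rightharpoonup u(0)$ in $L^2(\T^2)$; comparing with the hypothesis $u(t_n)\rightharpoonup u_0$ and using uniqueness of weak limits gives $u(0)=u_0$. In particular $E_u(0)=\tfrac12\|u_0\|_{L^2_x}^2=E_{u_0}$, so the target value of the right limit is exactly $E_{u_0}$. Moreover, by weak continuity again, $u(t)\rightharpoonup u_0$ in $L^2(\T^2)$ as $t\to 0^+$ along \emph{every} sequence.

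Next, fix an arbitrary sequence $t_n\to 0^+$. Weak lower semicontinuity of the $L^2(\T^2)$ norm applied to $u(t_n)\rightharpoonup u_0$ yields $\|u_0\|_{L^2_x}\le\liminf_n\|u(t_n)\|_{L^2_x}$, i.e.\ $E_{u_0}\le\liminf_{t\to 0^+}E_u(t)$. On the other hand, the assumption $E_u(t)\le E_{u_0}$ for all $t\in[0,T]$ gives at once $\limsup_{t\to 0^+}E_u(t)\le E_{u_0}$. Combining the two one-sided estimates,
\[
E_{u_0}\le \liminf_{t\to 0^+}E_u(t)\le\limsup_{t\to 0^+}E_u(t)\le E_{u_0},
\]
so $\lim_{t\to 0^+}E_u(t)=E_{u_0}=E_u(0)$, which is the claimed right continuity.

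I do not expect any genuine obstacle here: the only point deserving a line of care is the identification $u(0)=u_0$, which guarantees that the right limit is being compared to the correct value $E_u(0)$; after that, the statement reduces to the standard fact that weak convergence together with a matching upper bound on the norms forces convergence of the norms. (Note in particular that no strong convergence of $u(t)$ to $u_0$ is needed, nor used.)
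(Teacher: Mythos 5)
Your proof is correct and is exactly the standard argument this lemma calls for (the paper states it without proof): identify $u(0)=u_0$ via weak continuity and uniqueness of weak limits, then squeeze $E_u(t)$ between the weak lower semicontinuity bound $E_{u_0}\le\liminf_{t\to 0^+}E_u(t)$ and the assumed upper bound $E_u(t)\le E_{u_0}$. No gaps.
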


\begin{remark}\label{R:diss_sol}
Assume $u^\nu_0\rightarrow u_0$ in $L^2(\T^2)$. Any limit $u^{\nu} \overset{*}{\rightharpoonup} u$ in $L^\infty([0,T]; L^2(\mathbb{T}^2))$ of a sequence of Leray--Hopf solutions to \eqref{NS} can be redefined on a negligible set of times so that $u\in C^0([0,T];L^2_{\rm w} (\T^2))$. Indeed, any such limit is a \quotes{dissipative} solution in the sense of Lions \cite{Lions_book}*{Chapter 4}. It follows that \eqref{energy admissibility new} can be upgraded to hold for all $t\in [0,T]$. In particular, Lemma \ref{L:energy right cont} implies that $E_u$ is right-continuous at $t=0$. For the representative $u\in C^0([0,T];L^2_{\rm w} (\T^2))$, we can further assume that $u^\nu(t)\rightharpoonup u(t)$ in $L^2(\T^2)$ for all $t\in [0,T]$. Indeed, by the uniform bound of $\{u^{\nu}\}_\nu$ in $L^\infty([0,T]; L^2(\mathbb{T}^2))$, the Navier--Stokes equations automatically imply that $\{u^{\nu}\}_\nu$ stays bounded in $\Lip ([0,T]; H^{-N}(\mathbb{T}^2))$ for a sufficiently large $N\in \N$, from which the claim follows by the Aubin--Lions lemma and the density of $C^\infty(\T^2)$ in $L^2(\T^2)$.
\end{remark}

We will make use of the following classical estimates for two-dimensional viscous fluids 
\begin{align}
\|\omega^\nu(t)\|_{L^1_x}&\leq\|\omega^\nu_0\|_{\mathcal M_x}\qquad \forall t>0,\label{vortic stays a measure}\\
\|\omega^\nu(t)\|^2_{L^2_x}&\leq \frac{\|u^\nu_0\|^2_{L^2_x}}{2t \nu}\qquad \forall t>0. \label{D bounded positive times}
\end{align}
For the proof see for instance \cite{DRP24}*{Proposition 2.4} and \cite{DRP24}*{Lemma 3.1}.
 Moreover, for measure initial vorticity, it is possible to decompose the dynamics of the absolutely continuous and singular parts. When the singular part has distinguished sign, the decomposition also comes with nice bounds.

\begin{proposition}\label{P:split_vort_L1_pos}
Let $\{u^\nu_0\}_{\nu}\subset L^2(\T^2)$ be a sequence of divergence-free vector fields such that $\{\omega^\nu_0\}_{\nu}\subset \mathcal M(\T^2)$ admits a decomposition $\omega_0^\nu =f^\nu_0 + \mu_0^\nu$ with  $\left\{ f^\nu_0\right\}_{\nu}\subset L^1(\T^2)$ and  $\mu^\nu_0 \geq 0$. Let $\{u^\nu\}_{\nu}$ be the corresponding sequence of Leray--Hopf solutions and denote by $\{\omega^\nu\}_{\nu}$ the corresponding sequence of  vorticities. There exists a decomposition $
\omega^\nu=f^\nu + \mu^\nu$
such that
\begin{equation}
\{f^{\nu}\}_{\nu}\subset L^\infty([0,T];L^1(\T^2)) \qquad \text{and} \qquad \{\mu^\nu\}_{\nu}\subset L^\infty([0,T];\mathcal M(\T^2)), \, \mu^\nu\geq 0.
\end{equation}
In particular, it holds 
\begin{equation}\label{ineq_delort_pos}
|\omega^\nu|\leq 2|f^\nu|+\omega^\nu.
\end{equation}
In addition, if $\mathcal K$ is the set defined in \eqref{def superlinear beta}, for every $\beta\in \mathcal K$ it holds 
\begin{equation}\label{beta_f_estimate}
\int_{\T^2}\beta(|f^\nu(x,t)|)\,dx \leq \int_{\T^2} \beta(|f^\nu_0(x)|)\,dx\qquad \forall t>0.
\end{equation}
\end{proposition}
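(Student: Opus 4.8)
The strategy is to decompose the vorticity at the level of the linear transport-diffusion equation. First I would recall that in two dimensions the vorticity $\omega^\nu$ satisfies the scalar advection--diffusion equation
\begin{equation}\label{vort eq plan}
\partial_t \omega^\nu + u^\nu\cdot\nabla \omega^\nu = \nu\Delta\omega^\nu,
\end{equation}
with divergence-free drift $u^\nu$. Since $u^\nu$ is smooth (instantaneous regularization) and divergence-free, the associated linear problem admits a well-posedness theory: for any initial datum in $L^1(\T^2)$ or in $\mathcal M(\T^2)$, there is a unique solution, and the solution operator is linear and order-preserving (maximum principle). The plan is to define $f^\nu$ and $\mu^\nu$ to be the solutions of \eqref{vort eq plan} with drift $u^\nu$ and initial data $f^\nu_0$ and $\mu^\nu_0$ respectively; by linearity and uniqueness, $\omega^\nu = f^\nu + \mu^\nu$. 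The positivity $\mu^\nu\ge 0$ follows from the comparison principle applied to $\mu^\nu$ against the zero solution, using $\mu^\nu_0\ge 0$. The uniform $L^\infty_tL^1_x$ bound on $f^\nu$ comes from the standard $L^1$ contraction for the advection--diffusion equation with divergence-free drift: multiplying \eqref{vort eq plan} by $\mathrm{sgn}(f^\nu)$ (or rather a smooth approximation, then passing to the limit) and integrating, the transport term drops because $\div u^\nu=0$ and the viscous term has a sign, giving $\|f^\nu(t)\|_{L^1_x}\le \|f^\nu_0\|_{L^1_x}$; likewise $\mu^\nu$ stays a nonnegative measure with mass bounded by $\|\mu^\nu_0\|_{\mathcal M_x}$ by integrating \eqref{vort eq plan} for $\mu^\nu$ directly (the integral is conserved). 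This establishes the first displayed conclusion.

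For the pointwise bound \eqref{ineq_delort_pos}, I would simply write $|\omega^\nu| = |f^\nu+\mu^\nu| \le |f^\nu| + \mu^\nu = |f^\nu| + (\omega^\nu - f^\nu) \le 2|f^\nu| + \omega^\nu$, using $\mu^\nu\ge 0$ and $\mu^\nu = \omega^\nu - f^\nu$. This is a one-line algebraic manipulation once the decomposition is in hand.

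For the entropy-type inequality \eqref{beta_f_estimate} with $\beta\in\mathcal K$, the key is that $\beta$ (extended evenly, or applied to $|f^\nu|$) is convex and the equation for $f^\nu$ is advection--diffusion with divergence-free drift. Testing \eqref{vort eq plan} (for $f^\nu$) against $\beta'(|f^\nu|)\,\mathrm{sgn}(f^\nu)$ — more precisely, differentiating $\int_{\T^2}\beta(|f^\nu(x,t)|)\,dx$ in time — the transport term integrates to zero because $\div u^\nu = 0$, while the viscous contribution is $\nu\int \Delta f^\nu \,\beta'(|f^\nu|)\mathrm{sgn}(f^\nu) = -\nu\int \beta''(|f^\nu|)|\nabla f^\nu|^2 \le 0$ since $\beta''\ge 0$ (after an integration by parts, valid by smoothness of $f^\nu$ for $t>0$). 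Hence $t\mapsto \int_{\T^2}\beta(|f^\nu(x,t)|)\,dx$ is nonincreasing, which gives \eqref{beta_f_estimate}. One should be slightly careful that $f^\nu$ need not be smooth down to $t=0$ when $f^\nu_0$ is merely $L^1$; this is handled by a standard approximation argument (mollify $f^\nu_0$, obtain the monotonicity for the smooth solutions, and pass to the limit using $L^1$ stability and Fatou, or lower semicontinuity of the convex functional).

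The main obstacle I anticipate is purely technical rather than conceptual: making rigorous the manipulations with $\mathrm{sgn}(f^\nu)$ and $\beta'(|f^\nu|)$, which are not smooth, and ensuring the integrations by parts and the chain rule are justified. This is the usual renormalization/DiPerna--Lions issue, but here it is benign because the drift $u^\nu$ is smooth and divergence-free for each fixed $\nu$, so one can either invoke classical parabolic regularity (the solution is smooth and classical for $t>0$) or use the standard mollification-and-limit scheme; either way, no genuinely new difficulty arises. A minor point is to confirm that the linear solution operator is consistent with the (unique, by \cites{RR,BV22}) Leray--Hopf vorticity, which follows from uniqueness for \eqref{vort eq plan} given the smooth drift.
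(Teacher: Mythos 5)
Your plan is correct and is essentially the argument the paper relies on: the paper simply defers to the proof of \cite{DRP24}*{Proposition 3.2}, which performs exactly this decomposition by solving the linear advection--diffusion equation with drift $u^\nu$ separately from $f^\nu_0$ and $\mu^\nu_0$, and then uses the comparison principle, the $L^1$ contraction for divergence-free drifts, and the convexity of $\beta$ to get \eqref{ineq_delort_pos} and \eqref{beta_f_estimate}. The only technical point worth keeping in mind is the solvability of the linear problem near $t=0$, where the drift is merely $L^\infty_t L^2_x\cap L^2_t H^1_x$ rather than smooth, but this is handled by the standard approximation/duality scheme you indicate.
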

The above proposition follows by the proof of \cite{DRP24}*{Proposition 3.2}. We conclude this section with the following.
\begin{proposition}
    \label{P: D and tilde D are equiv}
    Let $\{u^\nu_0\}_{\nu}\subset L^2(\T^2)$ be a bounded sequence of divergence-free vector fields and let $\{u^\nu\}_\nu$ be the corresponding sequence of Leray--Hopf solutions to \eqref{NS}. Assume that $\nu|\nabla u^{\nu}|^2\overset{*}{\rightharpoonup} D$ and $\nu|\omega^{\nu}|^2\overset{*}{\rightharpoonup} \tilde D$ in $\mathcal{M}(\mathbb{T}^2\times [0,T])$. There exists a constant $C>0$ such that 
    \begin{equation}
        \label{quantitative equivalence D and tilde D}
        \frac{1}{C} \tilde D (A)\leq D(A)\leq C \tilde D(A) \qquad \forall A\subset\T^2\times [0,T], \, A \text{ Borel}.
    \end{equation}
\end{proposition}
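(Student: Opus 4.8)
The key is the elementary pointwise identity, valid for any smooth divergence-free $v=(v_1,v_2)$ on $\T^2$,
\begin{equation}\label{eq:ident-plan}
\abs{\nabla v}^2 = \abs{\curl v}^2 - 2\,J(v), \qquad J(v):=\partial_1 v_1\,\partial_2 v_2 - \partial_2 v_1\,\partial_1 v_2 ,
\end{equation}
which one checks by expanding both sides and using $\partial_1 v_1=-\partial_2 v_2$. Applying \eqref{eq:ident-plan} to $v=u^\nu(\cdot,t)$ (legitimate since $u^\nu$ is smooth for $\nu>0$) and multiplying by $\nu$ gives $\nu\abs{\nabla u^\nu}^2 = \nu\abs{\omega^\nu}^2 - 2\nu J(u^\nu)$ on $\T^2\times(0,T]$. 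Since $\nu\abs{\nabla u^\nu}^2\overset{*}{\rightharpoonup}D$ and $\nu\abs{\omega^\nu}^2\overset{*}{\rightharpoonup}\tilde D$ in $\mathcal{M}(\T^2\times[0,T])$ by assumption, the statement \eqref{quantitative equivalence D and tilde D} — in fact the stronger equality $D=\tilde D$, so that $C=1$ is admissible — will follow once we show $\nu J(u^\nu)\overset{*}{\rightharpoonup}0$ in $\mathcal{M}(\T^2\times[0,T])$.

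To prove the latter, first note that $\{\nu J(u^\nu)\}_\nu$ is bounded in $\mathcal{M}(\T^2\times[0,T])$, since $\norm{\nu J(u^\nu)}_{L^1(\T^2\times[0,T])}\le \nu\int_0^T\norm{\nabla u^\nu(t)}_{L^2_x}^2\,dt\le \tfrac12\norm{u^\nu_0}_{L^2_x}^2$ by the energy equality \eqref{NS en bal}, and the right-hand side is uniformly bounded. Using $\div u^\nu=0$ again, $J$ has the divergence form $J(u^\nu)=\partial_1\big(u_1^\nu\,\partial_2 u_2^\nu\big)-\partial_2\big(u_1^\nu\,\partial_1 u_2^\nu\big)$, so that for $\varphi\in C^1(\T^2\times[0,T])$, integrating by parts in $x$ (no boundary terms, by periodicity) and estimating by Cauchy--Schwarz in space and then in time,
$$
\Big|\nu\int_0^T\!\!\int_{\T^2}\varphi\, J(u^\nu)\,dx\,dt\Big|\le C\,\norm{\nabla_x\varphi}_{L^\infty}\,\nu\,\norm{u^\nu}_{L^\infty_tL^2_x}\int_0^T\norm{\nabla u^\nu(t)}_{L^2_x}\,dt \le C\,\norm{\nabla_x\varphi}_{L^\infty}\sqrt{\nu} ,
$$
where we used $\norm{u^\nu}_{L^\infty_tL^2_x}\le\norm{u^\nu_0}_{L^2_x}$ and $\int_0^T\norm{\nabla u^\nu(t)}_{L^2_x}\,dt\le\sqrt{T/\nu}\,\big(\nu\int_0^T\norm{\nabla u^\nu(t)}_{L^2_x}^2\,dt\big)^{1/2}$; in particular the left-hand side tends to $0$ as $\nu\to0$. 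To pass from $C^1$ to arbitrary $\varphi\in C^0(\T^2\times[0,T])$ one mollifies $\varphi$ in the space variable only, writing $\varphi=\varphi_\delta+(\varphi-\varphi_\delta)$ with $\varphi_\delta$ smooth in $x$ and $\norm{\varphi-\varphi_\delta}_{L^\infty}\to0$ as $\delta\to0$, handles the first piece by the estimate above (letting $\nu\to0$ with $\delta$ fixed), and absorbs the error using the uniform mass bound on $\{\nu J(u^\nu)\}_\nu$. Hence $\nu J(u^\nu)\overset{*}{\rightharpoonup}0$, so $D=\tilde D$ and \eqref{quantitative equivalence D and tilde D} holds with $C=1$.

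The argument is essentially routine; the only points requiring (minor) care are verifying the two-dimensional identities in \eqref{eq:ident-plan} and the divergence form of $J$, the density step (necessary because the pointwise estimate only controls $C^1$-in-$x$ test functions, so one invokes the uniform $\mathcal{M}$-bound), and the observation that a single integration by parts leaves exactly one power of $\nu$ in front, which is subcritical since $\int_0^T\nu\norm{\nabla u^\nu(t)}_{L^2_x}\,dt=O(\sqrt{\nu})$ by the energy equality. An alternative route reaching the same conclusion is to localize the Hodge-type identity $\norm{\nabla w}_{L^2}^2=\norm{\curl w}_{L^2}^2+\norm{\div w}_{L^2}^2$ (for mean-zero $w$ on $\T^2$) applied to $w=\varphi^{1/2}u^\nu$: the cross terms and the $\div$ term then involve only undifferentiated $u^\nu$ and so vanish after multiplying by $\nu$.
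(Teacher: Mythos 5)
Your proof is correct, and it establishes something strictly stronger than the proposition: the exact identity $D=\tilde D$, so that $C=1$ works in \eqref{quantitative equivalence D and tilde D}. The route is genuinely different from the paper's. The paper localizes the global $L^2$ Hodge/Calder\'on--Zygmund inequality by applying it to $\tilde u^\nu=\varphi u^\nu$, expands $\nabla(\varphi u^\nu)$, $\curl(\varphi u^\nu)$, $\div(\varphi u^\nu)$, and shows the commutator terms carry extra powers of $\nu$ and vanish; this yields $\langle D,\abs{\varphi}^2\rangle\leq C\langle\tilde D,\abs{\varphi}^2\rangle$ (together with the trivial pointwise bound $\abs{\omega^\nu}^2\leq 2\abs{\nabla u^\nu}^2$ for the other direction), hence only two-sided comparability with a constant. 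You instead exploit the exact pointwise null-Lagrangian identity $\abs{\nabla v}^2=\abs{\curl v}^2-2J(v)$ for divergence-free $v$ — both the identity and the divergence form of the Jacobian $J$ check out — and show $\nu J(u^\nu)\overset{*}{\rightharpoonup}0$ by a single integration by parts, which leaves $\nu\int_0^T\norm{u^\nu}_{L^2_x}\norm{\nabla u^\nu}_{L^2_x}\,dt=O(\sqrt{\nu})$ via the energy equality. Your density step (mollifying the test function in $x$ only and absorbing the error with the uniform mass bound) is exactly the right way to upgrade from $C^1$-in-$x$ to $C^0$ test functions. Two cosmetic remarks: the divergence form of $J$ does not actually use $\div u^\nu=0$ (only the symmetry of second derivatives), so the phrase ``using $\div u^\nu=0$ again'' is superfluous; and to justify the integration by parts rigorously you should invoke the instantaneous smoothing of two-dimensional Leray--Hopf solutions (integrate over $[\delta,T]$ and let $\delta\to0$ using the uniform $L^1$ bound on $\nu J(u^\nu)$), since the initial data are merely $L^2$. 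Neither point affects the validity of the argument. Your closing observation that localizing the Hodge identity on $\varphi^{1/2}u^\nu$ gives an alternative proof is essentially the paper's own strategy, so you have in effect identified both routes and chosen the sharper one.
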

\begin{proof}
Note that  $|\omega^\nu|^2\leq 2|\nabla u^\nu|^2$ holds point-wise in space-time. Then the lower bound in \eqref{quantitative equivalence D and tilde D} directly follows. We are left to prove $D(A)\leq C \tilde D(A)$. The goal is to localize the Calder\'on--Zygmund estimate relating $\nabla u^{\nu}$ and $\omega^{\nu}$. To this end, let $\varphi\in C^1(\T^2\times [0,T])$ be arbitrary. By setting $\tilde u^{\nu}:=u^{\nu}\varphi$,  we have
\begin{equation}
    \label{general_CZ}
    \nu\int_0^T\int_{\T^2} |\nabla \tilde u^{\nu}|^2\leq C \nu\left(\int_0^T\int_{\T^2} |\curl \tilde u^{\nu}|^2+\int_0^T\int_{\T^2} |\div \tilde u^{\nu}|^2 \right).
\end{equation}
Note that $\nabla \tilde u^{\nu}=\varphi \nabla u^{\nu} + u^{\nu}\otimes \nabla \varphi$. Thus we can expand the left-hand-side in \eqref{general_CZ} as 
$$
\underbrace{\nu\int_0^T \int_{\T^2} |\varphi|^2 |\nabla u^{\nu}|^2}_{=:I_\nu}+\underbrace{\nu\int_0^T \int_{\T^2} |u^{\nu}\otimes \nabla \varphi|^2}_{=:II_\nu}+\underbrace{2\nu\int_0^T \int_{\T^2} \varphi\nabla u^{\nu} :u^{\nu}\otimes \nabla\varphi}_{=:III_\nu}.
$$

Clearly $I_\nu\rightarrow \langle D, |\varphi|^2\rangle$ by assumption. Moreover, by \eqref{NS en bal} we get
\begin{align}
 II_\nu\leq C \nu \|u^{\nu}_0\|_{L^2_x}^2\rightarrow 0\qquad \text{and}\qquad 
\left| III_\nu \right|\leq C \sqrt{\nu}  \|u^{\nu}_0\|_{L^2_x}^2\rightarrow 0.
\end{align}
These prove
$$
\nu\int_0^T\int_{\T^2} |\nabla \tilde u^{\nu}|^2\rightarrow \langle D, |\varphi|^2\rangle.
$$
Similarly, by expanding $\curl \tilde u^{\nu} =\omega^{\nu}\varphi + u^{\nu}\cdot \nabla^\perp \varphi$ and $\div \tilde u^{\nu}=u^{\nu}\cdot \nabla \varphi$ we obtain
$$
\nu\left(\int_0^T\int_{\T^2} |\curl \tilde u^{\nu}|^2+\int_0^T\int_{\T^2} |\div \tilde u^{\nu}|^2 \right)\rightarrow \langle \tilde D, |\varphi|^2\rangle.
$$
We have thus proved that $\langle  D, |\varphi|^2\rangle\leq C\langle \tilde D, |\varphi|^2\rangle$ for all $\varphi\in C^1(\T^2\times [0,T])$, from which the upper bound in \eqref{quantitative equivalence D and tilde D} follows.
\end{proof}

\section{Dissipation, defect and vorticity measures}\label{Measure_comparison}
Here we discuss the proofs of Theorem \ref{T: general leray}, Theorem \ref{T: measure vort} and Corollary \ref{C: pos vort and atomic}. As most of the results proved in this paper, they all build on the following two propositions: the first (Proposition \ref{P:dissipation short times quantitative}) dealing with the dissipation for short times, while the second (Proposition \ref{P:dissipation positive times}) for strictly positive times. The two regimes are quite different.

The next proposition gives a quantitative equi-continuity of the dissipation for short times in terms of the $L^2(\T^2)$ modulus of continuity of the initial data. As it will be clear from the proof, stronger assumptions, e.g. a uniform bound of the initial data in $C^\sigma(\T^2)$, would lead to stronger conclusions.

\begin{proposition}
    \label{P:dissipation short times quantitative}
      Let $\{u^\nu_0\}_\nu\subset L^2(\T^2)$ be a bounded sequence of divergence-free vector fields. Denote by $u^\nu_{0,\eps}:=u^\nu_0*\rho_\eps$ the mollification of $u^\nu_0$ and define
      \begin{equation}
          \label{modulus of comapctness}
          \Phi(\eps):=\sup_{\nu>0}\|u^\nu_{0,\eps}-u^\nu_0\|_{L^2_x}.
      \end{equation}
      Then, denoting by $\{u^\nu\}_\nu$ the corresponding sequence of Leray--Hopf solutions to \eqref{NS}, there exists a constant $C>0$ such that 
      \begin{equation}
      \|u^\nu(\delta)-u^\nu_0\|^2_{L^2_x}\leq C \left( \Phi(\eps)+\frac{\delta}{\eps^2}\right) \qquad \forall \eps,\delta,\nu\in (0,1). \label{compactness short times quant}
       \end{equation}
      Consequently
      $$
      \nu\int_0^\delta \|\nabla u^\nu(t)\|^2_{L^2_x}\,dt\leq C \sqrt{\Phi(\eps)+\frac{\delta}{\eps^2}} \qquad \forall \eps,\delta,\nu\in (0,1).
      $$
      In particular, if $\{u^\nu_0\}_\nu\subset L^2(\T^2)$ is strongly compact\footnote{Note that $\Phi(\eps)\rightarrow 0$ if and only if $\{u^\nu_0\}_\nu\subset L^2(\T^2)$ is strongly compact.}, for any $\eps>0$ there exists $\delta>0$ such that 
      \begin{equation}
          \label{diss short time equicont}
          \sup_{\nu> 0} \nu\int_0^\delta \|\nabla u^\nu(t)\|^2_{L^2_x}\,dt<\eps.
      \end{equation}
\end{proposition}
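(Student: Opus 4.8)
The plan is to exploit the energy equality \eqref{NS en bal} to reduce the dissipation bound to a bound on $\|u^\nu(\delta) - u^\nu_0\|_{L^2_x}^2$, and then to estimate that quantity by inserting a mollification at scale $\eps$ as an intermediary. First I would note that \eqref{NS en bal} gives immediately
$$
\nu \int_0^\delta \|\nabla u^\nu(t)\|_{L^2_x}^2 \, dt = \frac12\|u^\nu_0\|_{L^2_x}^2 - \frac12 \|u^\nu(\delta)\|_{L^2_x}^2,
$$
and the elementary identity $\|a\|^2 - \|b\|^2 = \langle a-b, a+b\rangle \le \|a-b\|(\|a\|+\|b\|)$, combined with $\|u^\nu(\delta)\|_{L^2_x} \le \|u^\nu_0\|_{L^2_x}$ and the uniform $L^2$ bound on the initial data, shows that the left-hand side is controlled by $C \|u^\nu(\delta) - u^\nu_0\|_{L^2_x}$. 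Taking a square root at the end yields exactly the claimed form $C\sqrt{\Phi(\eps) + \delta/\eps^2}$ once \eqref{compactness short times quant} is proven, so the core of the work is the weak-strong type estimate \eqref{compactness short times quant}.

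For \eqref{compactness short times quant} I would test the weak formulation of \eqref{NS} against the mollified datum $u^\nu_{0,\eps}$ (which is a smooth, divergence-free, time-independent test function). Writing the duality pairing between $u^\nu(\delta) - u^\nu_0$ and $u^\nu_{0,\eps}$, the time-derivative term produces $\langle u^\nu(\delta) - u^\nu_0, u^\nu_{0,\eps}\rangle$, while the remaining terms are
$$
\int_0^\delta \int_{\T^2} \Big( u^\nu \otimes u^\nu : \nabla u^\nu_{0,\eps} - \nu \, \nabla u^\nu : \nabla u^\nu_{0,\eps}\Big) \, dx\, dt.
$$
The nonlinear term is bounded by $\delta \sup_t \|u^\nu(t)\|_{L^2_x}^2 \, \|\nabla u^\nu_{0,\eps}\|_{L^\infty_x} \le C \delta \eps^{-2}$ using \eqref{moll est 2} (with the $L^2 \to L^\infty$ version of the mollifier estimate, i.e. $\|\nabla u^\nu_{0,\eps}\|_{L^\infty} \le C\eps^{-2}\|u^\nu_0\|_{L^2}$) and the uniform energy bound. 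The viscous term is bounded by $\nu \big(\int_0^\delta \|\nabla u^\nu\|_{L^2_x}^2\big)^{1/2} \big(\delta \|\nabla u^\nu_{0,\eps}\|_{L^2_x}^2\big)^{1/2} \le C \sqrt{\nu}\,\|u^\nu_0\|_{L^2_x}\cdot \sqrt{\delta}\, \eps^{-1}\|u^\nu_0\|_{L^2_x}$, which for $\nu, \delta, \eps \in (0,1)$ is again of order at most $\delta/\eps^2$ (or can be absorbed after a Cauchy--Schwarz in $\eps$). Next I would decompose
$$
\|u^\nu(\delta) - u^\nu_0\|_{L^2_x}^2 = \langle u^\nu(\delta) - u^\nu_0, u^\nu_{0,\eps}\rangle + \langle u^\nu(\delta) - u^\nu_0, u^\nu_0 - u^\nu_{0,\eps}\rangle,
$$
the last term being controlled by $\|u^\nu(\delta) - u^\nu_0\|_{L^2_x}\, \Phi(\eps) \le C\Phi(\eps)$ (using again the uniform energy bound to absorb the $\|u^\nu(\delta)-u^\nu_0\|_{L^2_x}$ factor, or a weighted Young inequality). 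Collecting the three contributions gives \eqref{compactness short times quant}.

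The final assertion \eqref{diss short time equicont} is then a matter of unwinding the quantifiers: since $\Phi(\eps) \to 0$ as $\eps \to 0$ exactly when $\{u^\nu_0\}_\nu$ is strongly compact in $L^2(\T^2)$ (Fréchet--Kolmogorov), given any target $\eps>0$ one first picks $\eps'$ small so that $C\sqrt{\Phi(\eps')} < \eps/2$, then picks $\delta$ small so that $C\sqrt{\delta/\eps'^2} < \eps/2$, and the bound follows uniformly in $\nu$. The only genuinely delicate point I anticipate is making the weak testing against $u^\nu_{0,\eps}$ rigorous — one must be sure the Leray--Hopf solution in 2D is regular enough (which it is, by instantaneous smoothing) that all integrations by parts and the pairing $\langle u^\nu(\delta) - u^\nu_0, \cdot\rangle$ are legitimate, and that the pressure term drops out by the divergence-free condition on $u^\nu_{0,\eps}$. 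Everything else is bookkeeping with the mollifier estimates \eqref{moll est 1}--\eqref{moll est 3} and the energy equality.
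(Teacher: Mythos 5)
Your route is the same as the paper's: reduce the dissipation bound to $\|u^\nu(\delta)-u^\nu_0\|_{L^2_x}$ via the energy equality, then estimate that quantity by pairing the weak formulation of \eqref{NS} with the smooth, divergence-free, time-independent test function $u^\nu_{0,\eps}$, bounding the nonlinear term by $\delta\,\|\nabla u^\nu_{0,\eps}\|_{L^\infty_x}\lesssim \delta\eps^{-2}$ and the far-field term by $\Phi(\eps)$. However, there is one genuine gap in your write-up. The displayed ``decomposition''
\[
\|u^\nu(\delta)-u^\nu_0\|_{L^2_x}^2=\langle u^\nu(\delta)-u^\nu_0,\,u^\nu_{0,\eps}\rangle+\langle u^\nu(\delta)-u^\nu_0,\,u^\nu_0-u^\nu_{0,\eps}\rangle
\]
is not an identity: the right-hand side sums to $\langle u^\nu(\delta)-u^\nu_0,\,u^\nu_0\rangle$, which is not $\|u^\nu(\delta)-u^\nu_0\|_{L^2_x}^2$ (for generic vectors $a,b$, smallness of $\langle a-b,b\rangle$ does not control $\|a-b\|$). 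The missing ingredient is the energy inequality $\|u^\nu(\delta)\|_{L^2_x}\le\|u^\nu_0\|_{L^2_x}$ used \emph{before} the splitting: expanding the square gives
\[
\|u^\nu(\delta)-u^\nu_0\|_{L^2_x}^2=\bigl(\|u^\nu(\delta)\|_{L^2_x}^2-\|u^\nu_0\|_{L^2_x}^2\bigr)-2\langle u^\nu(\delta)-u^\nu_0,\,u^\nu_0\rangle\le -2\langle u^\nu(\delta)-u^\nu_0,\,u^\nu_0\rangle,
\]
and only at this point may you split $u^\nu_0=u^\nu_{0,\eps}+(u^\nu_0-u^\nu_{0,\eps})$ and run your estimates. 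This is exactly the first step of the paper's proof, and once inserted your argument closes.

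A second, minor point: keeping one gradient on each factor in the viscous term yields $\nu\int_0^\delta\int\nabla u^\nu:\nabla u^\nu_{0,\eps}\lesssim\sqrt{\nu\delta}\,\eps^{-1}$, which is \emph{not} dominated by $\delta\eps^{-2}$ uniformly in $\nu,\delta,\eps\in(0,1)$ (take $\delta=\eps^4$, $\nu$ close to $1$), so as written you would only obtain $C(\Phi(\eps)+\delta\eps^{-2}+\sqrt{\delta}\,\eps^{-1})$ rather than \eqref{compactness short times quant} verbatim. This is harmless for the qualitative conclusion \eqref{diss short time equicont}, but to get the stated quantitative form you should integrate by parts once more, as the paper does, writing the viscous term as $\nu\int_0^\delta\int u^\nu\cdot\Delta u^\nu_{0,\eps}$ and using $\|\Delta u^\nu_{0,\eps}\|_{L^2_x}\le\|u^\nu_0\|_{L^2_x}\|\Delta\rho_\eps\|_{L^1_x}\le C\eps^{-2}$, which gives $C\nu\delta\eps^{-2}\le C\delta\eps^{-2}$ for $\nu<1$. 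The remaining steps (the $C\delta\eps^{-2}$ bound on the nonlinear term, the $\Phi(\eps)$ bound on the far term using the uniform $L^2$ bound, and the quantifier unwinding for \eqref{diss short time equicont}) match the paper.
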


\begin{proof}
    Since $\|u^\nu(\delta)\|_{L^2_x}\leq \|u^\nu_0\|_{L^2_x}$, we bound 
    \begin{align}
        \|u^\nu(\delta)-u^\nu_0\|^2_{L^2_x}&= \|u^\nu(\delta)\|^2_{L^2_x} -  \|u^\nu_0\|^2_{L^2_x} +2\int_{\T^2} u^\nu_0(x)\cdot ( u^\nu_0(x)-u^\nu(x,\delta))\,dx\\
        &\leq 2\int_{\T^2} u^\nu_0(x)\cdot ( u^\nu_0(x)-u^\nu(x,\delta))\,dx\\
        &\leq C\Phi(\eps) + 2\underbrace{\int_{\T^2} u^\nu_{0,\eps}(x)\cdot ( u^\nu_0(x)-u^\nu(x,\delta))\,dx}_{I}.\label{jaemin smart split}
    \end{align}
    By using $u^\nu_{0,\eps}$ as a test function for \eqref{NS} we get
    \begin{align}
        |I|&=\left| \int_0^\delta \int_{\T^2}  u^\nu\otimes u^\nu :\nabla u^\nu_{0,\eps}+\nu \int_0^\delta \int_{\T^2} u^\nu\cdot \Delta u^\nu_{0,\eps}\right|\\
&\leq \int_0^\delta \|u^\nu(t)\|^2_{L^2_x}\|\nabla u^\nu_{0,\eps}\|_{L^\infty_x}\,dt +\nu \int_0^\delta \|u^\nu(t)\|_{L^2_x}\|\Delta u^\nu_{0,\eps}\|_{L^2_x}\,dt\\
&\leq C\delta \left( \|\nabla u^\nu_{0,\eps}\|_{L^\infty_x}+\|\Delta u^\nu_{0,\eps}\|_{L^2_x}\right).
    \end{align}
    These last two terms can be bounded respectively as 
    $$
    \|\nabla u^\nu_{0,\eps}\|_{L^\infty_x}=\|u^\nu_0*\nabla \rho_\eps\|_{L^\infty_x}\leq \|u^\nu_0\|_{L^2_x}\|\nabla \rho_\eps\|_{L^2_x}\leq C \eps^{-2}
    $$
    and 
    $$
    \|\Delta u^\nu_{0,\eps}\|_{L^2_x}=\|u^\nu_0*\Delta \rho_\eps\|_{L^2_x}\leq \|u^\nu_0\|_{L^2_x}\|\Delta \rho_\eps\|_{L^1_x}\leq C \eps^{-2}.
    $$
    Therefore, we deduce $|I|\leq C \delta \eps^{-2}$. Plugging this back into \eqref{jaemin smart split} yields to \eqref{compactness short times quant}. By the energy balance \eqref{NS en bal} we then obtain 
    \begin{align}
    \nu\int_0^\delta \|\nabla u^\nu(t)\|^2_{L^2_x}\,dt&=\|u^\nu_0\|^2_{L^2_x}-\|u^\nu(\delta)\|^2_{L^2_x}\\
    &\leq \|u^\nu_0-u^\nu(\delta)\|_{L^2_x}\|u^\nu_0+u^\nu(\delta)\|_{L^2_x}\\
    &\leq C \sqrt{\Phi(\eps)+\frac{\delta}{\eps^2}}.
       \end{align}
       When $\{u^\nu_0\}_\nu\subset L^2(\T^2)$ is strongly compact we have $\Phi(\eps)\rightarrow 0$ as $\eps\rightarrow 0$, from which we conclude the validity of \eqref{diss short time equicont}.
\end{proof}
A consequence of Proposition \ref{P:dissipation short times quantitative} is that the strong $L^2(\T^2)$ compactness of the initial data prohibits the dissipation to instantaneously happen. Without the initial compactness one can construct a counter-example (see Remark \ref{R:D initial time concentration}) on the whole space by a simple scaling analysis. However, there are also examples where the initial compactness does not hold but the equi-continuity in time \eqref{diss short time equicont} is still true (see Remark \ref{R:sharp atomic dynamics}).

\begin{proposition}
    \label{P:dissipation positive times}
         Let  $\{u_0^\nu\}_{\nu}\subset L^2(\T^2)$ be a sequence of divergence-free vector fields and let $\{u^\nu\}_{\nu}$ be the corresponding sequence of Leray--Hopf solutions to \eqref{NS}. Then, for any $\delta,\nu>0$
         \begin{equation}
             \label{higher order dynamical}
              \nu^2\int_\delta^T  \|\nabla \omega^\nu (t)\|^2_{L^2_x}\,dt \leq \frac{\|u^\nu_0\|^2_{L^2_x}}{\delta}.
         \end{equation}
         In particular, denoting by $\omega^\nu_\alpha:=\omega^\nu*\rho_\alpha$ the space mollification of $\omega^\nu$, there exists a constant $C>0$ such that 
         \begin{equation}
             \label{fundamental split dissipation}
        \nu\int_\delta^T\|\omega^\nu(t)\|^2_{L^2_x}\,dt\leq \nu \int_\delta^T\int_{\T^2} \omega^\nu(x,t) \omega^\nu_\alpha(x,t) \, dxdt + C\frac{\alpha}{\sqrt{\nu\delta}} \|u^\nu_0\|^2_{L^2_x}
         \end{equation}
         for all $\delta,\nu,\alpha>0$.
\end{proposition}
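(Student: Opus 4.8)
The plan is to deduce \eqref{higher order dynamical} from the enstrophy balance of the two-dimensional vorticity equation, and then to obtain \eqref{fundamental split dissipation} by an elementary algebraic splitting of $|\omega^\nu|^2$ combined with the mollification estimate \eqref{moll est 3} and the two a priori bounds \eqref{NS en bal} and \eqref{higher order dynamical}.

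\textbf{Step 1: enstrophy balance.} Since the Leray--Hopf solutions are smooth for positive times, I would test the vorticity equation $\partial_t\omega^\nu + u^\nu\cdot\nabla\omega^\nu = \nu\Delta\omega^\nu$ against $\omega^\nu$ on $[\delta,T]$. The transport term vanishes, because $\int_{\T^2}(u^\nu\cdot\nabla\omega^\nu)\omega^\nu\,dx = \tfrac12\int_{\T^2}u^\nu\cdot\nabla|\omega^\nu|^2\,dx = -\tfrac12\int_{\T^2}(\div u^\nu)|\omega^\nu|^2\,dx = 0$, leaving $\tfrac{d}{dt}\|\omega^\nu(t)\|_{L^2_x}^2 = -2\nu\|\nabla\omega^\nu(t)\|_{L^2_x}^2$. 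Integrating in time and discarding $\|\omega^\nu(T)\|_{L^2_x}^2\ge 0$ gives $\nu\int_\delta^T\|\nabla\omega^\nu(t)\|_{L^2_x}^2\,dt \le \tfrac12\|\omega^\nu(\delta)\|_{L^2_x}^2$; plugging in \eqref{D bounded positive times} at $t=\delta$ yields $\nu^2\int_\delta^T\|\nabla\omega^\nu(t)\|_{L^2_x}^2\,dt \le \tfrac{1}{4\delta}\|u^\nu_0\|_{L^2_x}^2$, which proves \eqref{higher order dynamical}.

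\textbf{Step 2: the splitting.} Pointwise in $t$ I would write $\|\omega^\nu(t)\|_{L^2_x}^2 = \int_{\T^2}\omega^\nu(\omega^\nu-\omega^\nu_\alpha)\,dx + \int_{\T^2}\omega^\nu\omega^\nu_\alpha\,dx$, so that after integrating over $[\delta,T]$ it suffices to control the error term $\nu\int_\delta^T\!\int_{\T^2}\omega^\nu(\omega^\nu-\omega^\nu_\alpha)\,dx\,dt$. Cauchy--Schwarz in space together with \eqref{moll est 3} bound it by $C\alpha\,\nu\int_\delta^T\|\omega^\nu(t)\|_{L^2_x}\|\nabla\omega^\nu(t)\|_{L^2_x}\,dt$, and Cauchy--Schwarz in time then gives the bound $C\alpha\,\nu\bigl(\int_\delta^T\|\omega^\nu\|_{L^2_x}^2\,dt\bigr)^{1/2}\bigl(\int_\delta^T\|\nabla\omega^\nu\|_{L^2_x}^2\,dt\bigr)^{1/2}$.

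\textbf{Step 3: closing the estimate.} For the first factor I would use $\|\omega^\nu(t)\|_{L^2_x}=\|\nabla u^\nu(t)\|_{L^2_x}$ and the energy equality \eqref{NS en bal} to get $\int_\delta^T\|\omega^\nu(t)\|_{L^2_x}^2\,dt\le \tfrac{1}{2\nu}\|u^\nu_0\|_{L^2_x}^2$; this is the one place where it matters to invoke \eqref{NS en bal} rather than the pointwise-in-time bound \eqref{D bounded positive times}, which would introduce a spurious $\log(T/\delta)$. For the second factor I would use \eqref{higher order dynamical} just established, i.e. $\int_\delta^T\|\nabla\omega^\nu(t)\|_{L^2_x}^2\,dt\le \tfrac{1}{\nu^2\delta}\|u^\nu_0\|_{L^2_x}^2$. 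Multiplying out, the prefactor $\nu$ and the $\nu$-powers from the two factors combine as $\nu\cdot\nu^{-1/2}\cdot\nu^{-1}=\nu^{-1/2}$, so the error term is $\le C\frac{\alpha}{\sqrt{\nu\delta}}\|u^\nu_0\|_{L^2_x}^2$, which is exactly \eqref{fundamental split dissipation}. I do not expect a genuine obstacle here: the only points requiring care are the rigor of the enstrophy identity (which rests on the instantaneous smoothing of two-dimensional Navier--Stokes, the same mechanism behind \eqref{D bounded positive times}) and the bookkeeping of the $\nu$-powers in Step 3, where one must resist bounding $\int_\delta^T\|\omega^\nu\|_{L^2_x}^2$ through \eqref{D bounded positive times}.
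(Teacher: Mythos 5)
Your proof is correct and follows essentially the same route as the paper: the enstrophy balance combined with the energy equality gives \eqref{higher order dynamical}, and the identical splitting $|\omega^\nu|^2=\omega^\nu\omega^\nu_\alpha+\omega^\nu(\omega^\nu-\omega^\nu_\alpha)$ estimated via \eqref{moll est 3}, Cauchy--Schwarz in time, \eqref{NS en bal} and \eqref{higher order dynamical} gives \eqref{fundamental split dissipation}, with the same $\nu$-power bookkeeping. The only cosmetic difference is in Step 1: the paper integrates the enstrophy identity over the starting time $s\in[0,T]$ to produce the time-weighted bound directly from \eqref{NS en bal}, whereas you evaluate the quoted estimate \eqref{D bounded positive times} at $t=\delta$ and then use enstrophy decay on $[\delta,T]$; the two are equivalent.
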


\begin{proof}
By taking the $\curl$ of the first equation in \eqref{NS} we get 
$$
\partial_t\omega^\nu +u^\nu \cdot \nabla \omega^\nu =\nu \Delta \omega^\nu.
$$
It follows 
\begin{align}
\frac12 \|\omega^\nu(T)\|^2_{L^2_x}+\nu \int_s^T\|\nabla \omega^\nu(t)\|^2_{L^2}\,dt= \frac12 \|\omega^\nu(s)\|^2_{L^2_x}\qquad \forall0<s<T.
\end{align}
By integrating in $\int_0^T\cdot \,ds$ and using \eqref{NS en bal} we obtain 
\begin{align}
\nu\int_0^T t\|\nabla\omega^\nu(t)\|^2_{L^2_x}\,dt &=\int_0^T\int_s^T \|\nabla\omega^\nu(t)\|^2_{L^2_x}\,dtds\\
&\leq \frac12 \int_0^T\|\omega^\nu(s)\|^2_{L^2_x}\,ds \\
&\leq \frac{1}{4\nu} \|u^\nu_0\|^2_{L^2_x}.
\end{align}
Then \eqref{higher order dynamical} immediately follows. To obtain \eqref{fundamental split dissipation} we simply split 
$$
\nu\int_\delta^T\int_{\T^2}|\omega^\nu|^2=\nu\int_\delta^T\int_{\T^2} \omega^\nu\omega^\nu_\alpha+\nu\int_\delta^T\int_{\T^2} \omega^\nu(\omega^\nu-\omega^\nu_\alpha),
$$
and then estimate the very last term by \eqref{moll est 3}, \eqref{higher order dynamical} and \eqref{NS en bal} as 
\begin{align}
    \nu\int_\delta^T\int_{\T^2} \omega^\nu(\omega^\nu-\omega^\nu_\alpha)&\leq \nu\alpha\int_\delta^T\|\omega^\nu(t)\|_{L^2_x}\|\nabla \omega^\nu(t)\|_{L^2_x}\,dt\leq C \frac{\alpha}{\sqrt{\nu\delta}} \|u^\nu_0\|^2_{L^2_x}.
\end{align}
\end{proof}

\begin{remark}
    \label{R:refined bound for rates}
    If in the above proof, instead of writing $|\omega^\nu|^2=\omega^\nu\omega^\nu_\alpha +\omega^\nu(\omega^\nu-\omega^\nu_\alpha)$, we use $|\omega^\nu|^2\leq 2(|\omega^\nu_\alpha|^2+|\omega^\nu-\omega^\nu_\alpha|^2)$, we can replace \eqref{fundamental split dissipation} with 
    \begin{equation}
             \label{refined fundamental split dissipation}
        \nu\int_\delta^T\|\omega^\nu(t)\|^2_{L^2_x}\,dt\leq 2 \nu \int_\delta^T\int_{\T^2}  |\omega^\nu_\alpha(x,t)|^2 \, dxdt + C\frac{\alpha^2}{\nu\delta} \|u^\nu_0\|^2_{L^2_x}\qquad \forall \delta,\nu,\alpha>0.
         \end{equation}
         This will be used in Section \ref{S:rates} to obtain better rates.
\end{remark}

We are now ready to prove Theorem \ref{T: general leray}, Theorem \ref{T: measure vort} and Corollary \ref{C: pos vort and atomic}.
\begin{proof}[Proof of Theorem \ref{T: general leray}]
  We divide the proof into steps.

  \underline{\textsc{Step 1}}: $D\in L^1([0,T];\mathcal M(\T^2))$.

  By \eqref{D bounded positive times} we deduce that $\{\nu|\nabla u^\nu|^2\}_\nu\subset L^\infty_{\rm loc} ((0,T];L^1(\T^2))$ is bounded. Then $D\in L^\infty_{\rm loc} ((0,T];\mathcal M(\T^2))$ necessarily. This means that 
  \begin{equation}
      \label{proof ok for pos times}
      \int_\delta^T \int_{\T^2} \varphi \, dD= \int_\delta^T \left( \int_{\T^2}\varphi(x,t)\,dD_t(x)\right)dt\qquad \forall \varphi \in C^0(\T^2\times [0,T]),\, \forall\delta>0,
  \end{equation}
  for some weakly measurable map $t\mapsto D_t$, $D_t(\T^2)\in L^\infty_{\rm loc}((0,T])$. The goal is to show that \eqref{proof ok for pos times} holds for $\delta=0$. This is equivalent to say that $D$ does not concentrate at the initial time. 

 By the lower semi-continuity of the weak* convergence of measures on open sets
  \begin{align}
      \int_\delta^T D_t(\T^2)\,dt\leq \liminf_{\nu\rightarrow 0}  \nu\int_\delta^T \int_{\T^2} |
      \nabla u^\nu|^2\leq \frac{\sup_{\nu>0}\|u^\nu_0\|^2_{L^2_x}}{2}<\infty.
  \end{align}
  Hence, letting $\delta\rightarrow 0$, we obtain $D_t(\T^2)\in L^1([0,T])$. For any $\varphi\in C^0(\T^2\times [0,T])$ such that $|\varphi|\leq 1$, we split 
  \begin{align}
      \left| \int_0^T \int_{\T^2} \varphi \, dD - \int_0^T \left( \int_{\T^2}\varphi(x,t)\,dD_t(x)\right)dt\right|&=  \left| \int_0^\delta \int_{\T^2} \varphi \, dD - \int_0^\delta \left( \int_{\T^2}\varphi(x,t)\,dD_t(x)\right)dt\right|\\
      &\leq D(\T^2\times [0,\delta])+\int_0^\delta D_t(\T^2)\,dt,\label{preparing D to be lebesgue}
  \end{align}
  where to obtain the first identity we have used \eqref{proof ok for pos times}. By $D_t(\T^2)\in L^1([0,T])$ we have 
  $$
  \lim_{\delta\rightarrow 0} \int_0^\delta D_t(\T^2)\,dt=0.
  $$
  Moreover
  $$
  D(\T^2\times [0,\delta])\leq \limsup_{\nu\rightarrow 0} \nu\int_0^{2\delta} \|\nabla u^\nu (t)\|^2_{L^2_x}
  \,dt,
  $$
  which vanishes as $\delta\rightarrow 0$ by Proposition \ref{P:dissipation short times quantitative} thanks to the strong compactness of the initial data in $L^2(\T^2)$. Thus, by letting $\delta\rightarrow 0$ in \eqref{preparing D to be lebesgue} we conclude $D=D_t\otimes dt$ as elements in $\mathcal M(\T^2\times [0,T])$.

  Since $D\in L^1([0,T];\mathcal M(\T^2))$, to prove that $D_t\ll \Lambda_t$ for a.e. $t\in [0,T]$, it is enough to prove that, for any $\delta>0$ it holds 
  \begin{equation}
      \label{D ll L for pos times}
      D_t\ll \Lambda_t \qquad \text{for a.e. } t\in [\delta,T].
  \end{equation}
Since from now on $\delta>0$ will be fixed, we will not keep track of it in all the estimates below. Most of them degenerate as $\delta\rightarrow 0$.

\underline{\textsc{Step 2}}: \textsc{Dissipation splitting}. 

 Let $\varphi\in C^\infty(\T^2\times [0,T])$ be an arbitrary non-negative test function. Integrating by parts, we split the dissipation into three terms
    \begin{align}
        \nu\int_\delta^T \int_{\T^2}  |\nabla u^\nu|^2 \varphi &=-\nu \int_\delta^T \int_{\T^2} u^\nu \cdot \Delta u^\nu \varphi - \nu \int_\delta^T \int_{\T^2} \nabla \varphi \cdot \nabla \frac{|u^\nu|^2}{2}\\
        &= -\underbrace{\nu \int_\delta^T \int_{\T^2} (u^\nu-u) \cdot \Delta u^\nu \varphi}_{I_\nu}  -\underbrace{\nu \int_\delta^T \int_{\T^2} u \cdot \Delta u^\nu \varphi}_{II_\nu}\\
        & \quad + \underbrace{\nu \int_\delta^T \int_{\T^2} \frac{|u^\nu|^2}{2} \Delta \varphi}_{III_\nu},\label{eq_splitting_new_proof_lambda_dynam}
    \end{align}
    where $u$ is the weak* limit of $\{u^\nu\}_\nu$ as in the statement of the theorem.  The term $I_\nu$ is the main contribution which is related to the defect measure $\Lambda$, while the terms $II_\nu$ and $III_\nu$ will be shown to be negligible as $\nu\rightarrow 0$. 

\underline{\textsc{Step 3}}: $II_\nu,III_\nu\rightarrow 0$.

Clearly 
\begin{equation}
        \limsup_{\nu\rightarrow 0} \big| III_\nu \big| \leq C  \limsup_{\nu\rightarrow 0} \nu \int_\delta^T \|u^\nu(t)\|_{L^2_x}^2\,dt\leq C \limsup_{\nu\rightarrow 0} \nu  \|u^\nu_0\|_{L^2_x}^2=0.
    \end{equation}

Similarly 
$$
\nu \int_\delta^T\int_{\T^2}\psi \cdot \Delta u^\nu =\nu \int_\delta^T\int_{\T^2} u^\nu\cdot \Delta \psi\rightarrow 0 \qquad \forall \psi\in C^\infty(\T^2\times [\delta,T]).
$$
Since $\{\nu \Delta u^\nu\}_\nu\subset L^2(\T^2\times [\delta,T])$ is bounded by \eqref{higher order dynamical}, this shows  $\nu \Delta u^\nu\rightharpoonup 0$ in $L^2(\T^2\times [\delta,T])$. Then $II_\nu\rightarrow 0$.

\underline{\textsc{Step 4}}: $I_\nu\sim \Lambda$ \textsc{and conclusion}.

By Cauchy--Schwarz  and \eqref{higher order dynamical} we get 
\begin{align}
    \big| I_\nu\big| &\leq  \nu \left( \int_\delta^T \int_{\T^2} \varphi^2 |u^\nu -u|^2 \right)^{\frac{1}{2}}\left(\int_\delta^T \|\nabla \omega^\nu (t)\|_2^2\,dt \right)^\frac{1}{2}\leq C\left( \int_\delta^T \int_{\T^2} \varphi^2 |u^\nu -u|^2 \right)^{\frac{1}{2}}.
\end{align}
Thus, by letting $\nu\rightarrow 0$ in \eqref{eq_splitting_new_proof_lambda_dynam}, we achieve 
\begin{equation}\label{D and L quant pos times}
\int_\delta^T \int_{\T^2} \varphi \,dD\leq C \left( \int_\delta^T \int_{\T^2} \varphi^2 \,d\Lambda \right)^{\frac{1}{2}} \qquad \forall \varphi \in C^\infty(\T^2\times [0,T]), \,\forall\delta>0.
\end{equation}
This shows that $D\ll \Lambda$ as measures on $\T^2\times [\delta,T]$, from which we conclude the validity of \eqref{D ll L for pos times} by Lemma \ref{L:slice_measures}.
\end{proof}

 \begin{remark}
     \label{R:D vs Lambda quantitative}   
     For any fixed $\delta>0$, the estimate \eqref{D and L quant pos times} gives $D(A)\leq C_\delta \Lambda^{\sfrac{1}{2}}(A)$ for all Borel sets $A\subset \T^2\times [\delta,T]$. Thus, for positive times, the absolute continuity is quantitative. The constant $C_\delta\sim \frac{1}{\sqrt{\delta}}$ degenerates as $\delta\rightarrow 0$.
 \end{remark}
\begin{proof}[Proof of Theorem \ref{T: measure vort}]
    The first two claims $D\in L^1([0,T];\mathcal M(\T^2))$ and $D_t\ll \Lambda_t$ have already been proved in Theorem \ref{T: general leray}. Moreover, by the trivial relation $\hat\Omega\ll \Omega$, we only need to prove $D_t\ll \hat \Omega_t$ for a.e. $t\in [0,T]$. As already argued in the proof of Theorem \ref{T: measure vort}, it suffices to prove 
    \begin{equation}
      \label{D ll hat omega for pos times}
      D_t\ll \hat \Omega_t \qquad \text{for a.e. } t\in [\delta,T],
  \end{equation}
  for any $\delta>0$.  Without loss of generality we can assume  $\nu|\omega^\nu|^2\overset{*}{\rightharpoonup} \tilde D$ in $\mathcal{M}(\T^2\times [0,T])$. We will prove that, for any $\delta>0$, it holds
\begin{equation}
      \label{tilde D ll hat omega for pos times}
      \tilde D\ll \hat \Omega \qquad \text{as measures on } \T^2\times  [\delta,T].
  \end{equation}
Then \eqref{D ll hat omega for pos times} directly follows by Proposition \ref{P: D and tilde D are equiv} together with Lemma \ref{L:slice_measures}. Since from now on $\delta>0$ will be fixed, we will not keep track of it in all the estimates below. Most of them degenerate as $\delta\rightarrow 0$.

Let $\alpha>0$ and denote by $\omega^\nu_\alpha:=\omega^\nu *\rho_\alpha$ the space mollification of $\omega^\nu$. Let $\varphi\in C^0(\T^2\times [0,T])$ be an arbitrary non-negative test function such that $\varphi\leq 1$. By localizing \eqref{fundamental split dissipation} on $\varphi$ we get
\begin{align}\label{use of fundamental split}
    \nu\int_\delta^T \int_{\T^2} |\omega^\nu|^2\varphi \leq \nu \int_\delta^T\int_{\T^2} \omega^\nu\omega^\nu_\alpha \varphi +C\frac{\alpha}{\sqrt{\nu}}.
\end{align}
Moreover
\begin{align}
|\omega^\nu(x,t)||\omega^\nu_\alpha (x,t)|&\leq |\omega^\nu(x,t)|\int |\omega^\nu(y,t)| \rho_\alpha(x-y)\,dy\\
&\leq \frac{C}{\alpha^2}|\omega^\nu(x,t)|\int_{B_\alpha(x)} |\omega^\nu(y,t)|\,dy.
\end{align}
Let $\eps\in (0,1)$ be arbitrary. By plugging this last estimate into \eqref{use of fundamental split} and choosing $\alpha=\sqrt{\nu \eps}$ we obtain
\begin{equation}\label{where eps less 1 used}
    \nu\int_\delta^T \int_{\T^2} |\omega^\nu|^2\varphi \leq C \left(\frac{1}{\eps} \int_\delta^T\int_{\T^2} \hat \Omega^\nu\varphi +C\sqrt{\eps}\right),
\end{equation}
where $\hat \Omega^\nu$ is the function defined in \eqref{modified vort}.
Thus, by letting $\nu\rightarrow 0$, we get 
$$
\int_\delta^T \int_{\T^2} \varphi\, d\tilde D \leq C \left(\frac{1}{\eps} \int_\delta^T\int_{\T^2} \varphi \,d\hat \Omega +C\sqrt{\eps}\right),
$$
valid for all continuous $0\leq \varphi\leq 1$, for a constant $C>0$ independent on $\varphi$ and $\eps$. This yields to  
\begin{equation}\label{nice hat omega bound for small eps}
\tilde D (A)\leq C\left( \frac{\hat \Omega (A)}{\eps}+\sqrt{\eps}\right) \qquad \forall A\subset \T^2\times [\delta,T],\, A \text{ Borel},
\end{equation}
from which \eqref{tilde D ll hat omega for pos times} immediately follows since $\eps>0$ was arbitrary.
\end{proof}

\begin{remark}
    As for Remark \ref{R:D vs Lambda quantitative}, the absolute continuity $D\ll \Omega$ can be made quantitative for positive times. Indeed, a direct consequence of \eqref{use of fundamental split} applied with $\alpha=\sqrt{\eps \nu}$, together with \eqref{P: D and tilde D are equiv}, is the following estimate 
    $$
    D(A)\leq C_\delta\left(\frac{ \Omega (A)}{\eps}+\sqrt{\eps}\right) \qquad \forall A\subset \T^2\times [\delta,T],\, A \text{ Borel},
    $$
    valid for any\footnote{This is in contrast to \eqref{nice hat omega bound for small eps} which holds for $\eps\in (0,1)$ only. The restriction $\eps<1$ has indeed been used to derive \eqref{where eps less 1 used}.} $\eps,\delta>0$. By choosing $\eps=\Omega^{\sfrac{2}{3}}(A)$ we obtain $D(A)\leq C_\delta \Omega^{\sfrac{1}{3}}(A)$ for any Borel set $A\subset \T^2\times [\delta,T]$.
\end{remark}

\begin{proof}[Proof of Corollary \ref{C: pos vort and atomic}]
     We prove the two claims separately. 

     \underline{\textsc{Proof of $(a)$}}. Let $\{\hat \Omega^\nu\}_\nu\subset L^\infty([0,T];L^1(\T^2))$ be the sequence defined in \eqref{modified vort}. Since $\omega^\nu_0=f^\nu_0+\mu^\nu_0$ with $\mu_0^\nu\geq 0$ and $\{f^\nu_0\}_\nu\subset L^1(\T^2)$ relatively compact, by Proposition \ref{P: vort on balls Delort} and \eqref{vortic stays a measure} we get 
     \begin{align}
     \int_{\T^2} \hat \Omega^\nu(x,t)\,dx&\leq \|\omega^\nu(t)\|_{L^1_x}\sup_{x\in \T^2} \int_{B_{\sqrt{\nu}}(x)} |\omega^\nu(y,t)|\,dy\\
     &\leq C \|\omega_0^\nu\|_{\mathcal M_x}\left(G_\beta(\sqrt{\nu} )+\frac{1}{\sqrt{\log\frac{1}{\nu}}}\right),
      \end{align}
      for all sufficiently small $\nu>0$, where $G_\beta$ is the function defined in Definition \ref{D:inverse_beta} (see also Proposition \ref{P:inverse beta well defined}). This shows $\hat \Omega^\nu \rightarrow 0$ in $L^\infty([0,T];L^1(\T^2))$ and by Theorem \ref{T: measure vort} we conclude $D=0$.

      \underline{\textsc{Proof of $(b)$}}. We are assuming that 
      \begin{equation}\label{gamma is product assumpt}
          |\omega^\nu|\otimes |\omega^\nu|\overset{*}{\rightharpoonup}\Gamma \quad \text{in } L^\infty([0,T];\mathcal M(\T^2\times \T^2)),\quad \text{with} \quad \Gamma_t=\gamma_t\otimes \gamma_t \quad \text{for a.e. } t\in [0,T],
      \end{equation}
      for some $\gamma\in L^\infty([0,T];\mathcal M (\T^2))$. Let 
      $$
      \mathscr G_t:=\left\{ x\in \T^2\, : \,  \gamma_t(\{x\})>0\right\}.
      $$
      Since $\gamma_t$ is a finite measure for a.e. $t$, $\mathscr G_t$ is at most countable. The goal is to show that 
      \begin{equation}
          \label{hat omega atomic}
          \hat \Omega_t = \hat \Omega_t \llcorner \mathscr G_t \qquad \text{for a.e. } t\in [0,T].
      \end{equation}
      Since by Theorem \ref{T: measure vort} we know $D_t\ll \hat \Omega_t$, the validity of \eqref{hat omega atomic} forces $D_t$ to be purely atomic for a.e. $t\in [0,T]$. Then $D_t=D_t\llcorner \left(\mathscr L_t \cap \mathscr O_t\right)$ is a direct consequence of $D_t\ll \Lambda_t$ (proved in Theorem \ref{T: general leray}) and $D_t\ll \Omega_t$ (proved in Theorem \ref{T: measure vort}).

      We are left to prove \eqref{hat omega atomic}. Let $\varphi \in C^0(\T^2\times [0,T])$ be an arbitrary non-negative function and fix $r>0$. For any $0<\sqrt{\nu}<r$ we estimate 
      \begin{align}
          \int_0^T\int_{\T^2} \varphi (x,t) \hat \Omega^\nu(x,t)\,dxdt&\leq  \int_0^T\int_{\T^2} \int_{B_r(x)}\varphi (x,t)|\omega^\nu(x,t)| |\omega^\nu(y,t)|\,dydxdt\\
          &\leq \int_0^T\int_{\T^2} \int\varphi (x,t) \chi_r(y-x)|\omega^\nu(x,t)| |\omega^\nu(y,t)|\,dydxdt,
      \end{align}
      where $\chi_r\in C^\infty_c(B_{2r}(0))$ is such that $0\leq \chi_r\leq 1$ and $\chi_r\big|_{B_r(0)}\equiv 1$. Thus, by letting $\nu\rightarrow 0$ we obtain 
      \begin{equation}\label{Gamma on diagonal}
       \int_0^T\int_{\T^2} \varphi \,d\hat \Omega\leq  \int_0^T\left(\int_{\T^2} \int\varphi (x,t) \chi_r(y-x)\,d\Gamma_t (x,y)\right)dt.
    \end{equation}
      By the assumption \eqref{gamma is product assumpt}, this yields to 
        $$
       \int_0^T\int_{\T^2} \varphi \,d\hat \Omega\leq  \int_0^T\left(\int_{\T^2} \varphi (x,t) \gamma_t(B_{2r}(x))\,d\gamma_t(x)\right)dt\qquad \forall r>0.
      $$
      Since $\gamma_t(B_{2r}(x))\rightarrow \gamma_t(\{x\})$ as $r\rightarrow 0$, for all $x\in \T^2$ and a.e. $t\in [0,T]$, by the Lebesgue dominated convergence theorem we deduce\footnote{Note that $x\mapsto \gamma_t(\{x\})$ is, for a.e. $t$, an everywhere defined Borel map.} 
        \begin{equation}\label{preparing concusion atomic}
       \int_0^T\int_{\T^2} \varphi \,d\hat \Omega\leq  \int_0^T\left(\int_{\T^2} \varphi (x,t) \gamma_t(\{x\})\,d\gamma_t(x)\right)dt\qquad \forall\varphi \in C^0(\T^2\times [0,T]).
          \end{equation}
      The measure $\gamma_t(\{x\})d\gamma_t$ is purely atomic and concentrated on $\mathscr G_t$ for a.e. $t$, the atoms of $\gamma_t$. Then \eqref{preparing concusion atomic} becomes 
 \begin{equation}
       \int_0^T\int_{\T^2} \varphi \,d\hat \Omega\leq  \int_0^T\sum_{x\in \mathscr G_t}\varphi (x,t) \gamma_t^2(\{x\})\, dt\qquad \forall\varphi \in C^0(\T^2\times [0,T]),
          \end{equation}
          from which \eqref{hat omega atomic}  follows.
\end{proof}

\begin{remark}\label{R:relax product assumption}
    By \eqref{Gamma on diagonal} it is clear that $\hat \Omega_t$, and thus $D_t$ too, must be purely atomic as soon as $\Gamma_t$ is a discrete measure when restricted to the diagonal $\{ x \times x\, :\, x\in \T^2\}\subset \T^2\times \T^2$. This slightly relaxes the assumption $\Gamma_t=\gamma_t\otimes \gamma_t$.
    \end{remark}

\begin{remark}\label{R:schocet examp}
    In \cite{scho95}*{Pg 1102} the author provides a smooth sequence of vorticities $\{\omega^\nu\}_\nu$, bounded in $\Lip([0,T];W^{-2,1}(\T^2))\cap L^\infty([0,T];H^{-1}(\T^2) ) \cap L^\infty([0,T];\mathcal M(\T^2))$, such that $|\omega^\nu|\otimes |\omega^\nu|\overset{*}{\rightharpoonup} \Gamma$ in $L^\infty([0,T];\mathcal M(\T^2\times \T^2))$ for some $\Gamma$ characterized as
    $$
    \int_0^T\left(\int_{\T^2\times \T^2}\varphi (x,y,t)\,d\Gamma_t(x,y)\right)dt=\frac{1}{2\pi}\int_0^T \int_{-\pi}^\pi \varphi \left(\begin{pmatrix} 1 \\0 \end{pmatrix} \sin \theta, \begin{pmatrix} 1 \\0 \end{pmatrix} \sin \theta, t\right)\, d\theta dt
    $$
    for all continuous $\varphi$.  In particular\footnote{By Fubini's theorem, any measure of the form $\gamma\otimes \gamma$ on a product space must be discrete when restricted to the diagonal.}, $\nexists \gamma_t\in\mathcal M(\T^2)$ such that $\Gamma_t =\gamma_t\otimes \gamma_t$ for a.e. $t$. Consequently, the convergence $|\omega^\nu_t|\overset{*}{\rightharpoonup}\Omega_t$ in $\mathcal M(\T^2)$ cannot hold almost everywhere in time. This shows that the known uniform bounds for \eqref{NS} do not suffice to show that $\Gamma_t$ is a product measure, and the pure atomicity of $D_t$ might be, in principle, ruined by wild oscillations in time. 
\end{remark}

\begin{remark}
    \label{R:no need of compactness initial}
      As it is clear from the proof of Theorem \ref{T: general leray}, the assumption $\{u^\nu_0\}_\nu\subset L^2(\T^2)$ bounded is enough to get $D\in L^\infty_{\rm loc}((0,T];\mathcal M(\T^2))$. In particular all the absolute continuities and concentrations of $D_t$, for a.e. $t$, proved in Theorem \ref{T: general leray}, Theorem \ref{T: measure vort} and Corollary \ref{C: pos vort and atomic} remain true even without the strong compactness  at the initial time. However, in this case it is not possible anymore to deduce that $D=0$ as a space-time measure by $D_t=0$ for a.e. $t$, since $D$ might concentrate some mass at the initial time (see Remark \ref{R:D initial time concentration} below). As we have seen in Proposition \ref{P:dissipation short times quantitative}, this pathological behavior is ruled out if $\{u^\nu_0\}_\nu\subset L^2(\T^2)$ is strongly compact, which then allows to fully characterize $D$ by only looking at almost all time slices.
\end{remark}

\begin{remark}
    \label{R:D initial time concentration}
    We give an example on the whole space $\R^2$. For a given radial and average-free $\omega_0\in C^\infty_c(\R^2)$, solve $\partial_t \omega =\Delta \omega$. Then 
    $$
    \omega^\nu(x,t):=\frac{1}{\nu^2}\omega\left(\frac{x}{\nu},\frac{t}{\nu}\right)
    $$
    solves $\partial_t \omega^\nu =\nu \Delta \omega^\nu$ with initial data  $\omega_0^\nu (x) :=\frac{1}{\nu^2}\omega_0\left(\frac{x}{\nu}\right)$. Since radially symmetric, this defines a solution to \eqref{NS} as well. Moreover $\|\omega^\nu(t)\|_{L^1_x}\leq \|\omega_0\|_{L^1_x}$ for all $t,\nu>0$ and
    $$
    \nu\int_0^\nu \|\omega^\nu(t)\|_{L_x^2}^2\,dt= \int_0^1 \|\omega(t)\|^2_{L^2_x}\,dt\qquad \text{for all } \nu>0.
    $$
In particular it holds $\liminf_{\delta\rightarrow 0} D(\R^2\times [0,\delta])>0$. It can be checked that the corresponding initial velocities are compactly supported, stay bounded in $L^2(\R^2)$, but fail to converge strongly. Note also that $\{u^\nu\}_\nu$ is bounded\footnote{In fact, it goes strongly to zero in $L^2([0,T];H^\alpha(\R^2))$ for all $\alpha<\frac12$.} in $L^2([0,T];H^\frac12(\R^2))$. Thus, the strong $L^2(\R^2\times [0,T])$ compactness of the sequence of velocities is not enough to prevent the energy dissipation to concentrate at the initial time.
\end{remark}

\begin{remark}\label{R:sharp atomic dynamics}
By giving up on the strong compactness of the initial data, it is easy to construct an example on $\R^2$ where all the measures have an atom, for all times. Indeed, let $\omega_0\in C^\infty_c(\R^2)$ be a non-trivial radial profile with zero average. On $\R^2\times (0,\infty)$, solve $\partial_t\omega=\Delta \omega$ with initial condition $\omega_0$. Then, for $\nu>0$, the function
$$
\omega^\nu(x,t):=\frac{1}{\nu}\omega \left(\frac{x}{\sqrt{\nu}},t\right)
$$
solves $\partial_t\omega^\nu=\nu\Delta \omega^\nu$ with the, appropriately rescaled, corresponding initial condition. By the radial symmetry and the zero average condition this defines a sequence of solutions to \eqref{NS} with the corresponding velocities $\{u^\nu\}_\nu$ bounded in $L^\infty([0,\infty);L^2(\R^2))$. Direct computations show that all the measures $\Lambda_t, D_t$ and $\Omega_t$ have an atom at the origin, for all times $t\geq 0$. It follows that the argument used to prove $(b)$ in Corollary \ref{C: pos vort and atomic} is sharp. Indeed, the strong compactness of the initial velocities has only been used to rule out the atomic concentration of $D$ at time $t=0$.
\end{remark}

\section{The dissipative scale}\label{S:K41 scale}
We recall that, in the notation from \eqref{ell compact uniform}, we have
\begin{align}
    S^\nu_2(\ell):= \int_0^T \fint_{B_{\ell}(0)}\mathcal S^\nu_2(y,t) \, dydt\qquad  \text{with} \qquad 
     \mathcal S^\nu_2(y,t):=\norm{u^\nu (\cdot + y, t)  - u^\nu(\cdot, t)}_{L^2_x}^2. 
\end{align}
Theorem \ref{T: K41 scale intro} follows by the following quantitative bounds. 

\begin{proposition}
    \label{P:k41 scale quantitative bounds}
     Let  $\{u_0^\nu\}_{\nu}\subset L^2(\T^2)$ be a bounded sequence of divergence-free vector fields and let $\{u^\nu\}_{\nu}$ be the corresponding sequence of Leray--Hopf solutions to \eqref{NS}. Then
    \begin{equation}
        \label{k41 comp bounded by AD}
        S^\nu_{2}(\sqrt \nu)\leq  \nu \int_0^T \|\nabla u^\nu (t)\|^2_{L^2_x}\,dt
    \end{equation}
    and there exists a constant $C>0$ such that
    \begin{equation}
  \label{Ad bounded by k41 comp}
       \nu \int_\delta^T \|\nabla u^\nu (t)\|^2_{L^2_x}\,dt\leq  \frac{C}{\sqrt{\delta}} \Big(  S^\nu_2 (\sqrt\nu )\Big)^\frac12
\end{equation}
for all $\delta,\nu\in (0,1)$.
\end{proposition}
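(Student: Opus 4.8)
The plan is to treat the two bounds separately. The first, \eqref{k41 comp bounded by AD}, is purely kinematic; the second, \eqref{Ad bounded by k41 comp}, is where the Navier--Stokes structure --- via \cref{P:dissipation positive times} --- enters.

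\emph{Proof of \eqref{k41 comp bounded by AD}.} By the fundamental theorem of calculus along the segment joining $x$ to $x+y$, followed by Jensen's inequality, one has the pointwise-in-time bound $\mathcal S^\nu_2(y,t) = \norm{u^\nu(\cdot+y,t)-u^\nu(\cdot,t)}_{L^2_x}^2 \le |y|^2\norm{\nabla u^\nu(t)}_{L^2_x}^2$ for every $y\in\R^2$. Averaging over $y\in B_{\sqrt\nu}(0)$ and using $\fint_{B_{\sqrt\nu}(0)}|y|^2\,dy = \tfrac\nu2$ gives $\fint_{B_{\sqrt\nu}(0)}\mathcal S^\nu_2(y,t)\,dy \le \nu\norm{\nabla u^\nu(t)}_{L^2_x}^2$, and integrating in $t\in[0,T]$ yields \eqref{k41 comp bounded by AD}.

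\emph{Proof of \eqref{Ad bounded by k41 comp}.} Here I would start from \eqref{fundamental split dissipation} with $\alpha=\sqrt\nu$, which (recalling $\norm{\omega^\nu(t)}_{L^2_x} = \norm{\nabla u^\nu(t)}_{L^2_x}$) bounds $\nu\int_\delta^T\norm{\nabla u^\nu(t)}_{L^2_x}^2\,dt$ by the main term $\nu\int_\delta^T\int_{\T^2}\omega^\nu\omega^\nu_{\sqrt\nu}$ plus the approximation error produced by replacing $\omega^\nu$ by its mollification $\omega^\nu_{\sqrt\nu}$ at the Kolmogorov scale. For the main term, since $u^\nu_{\sqrt\nu}$ is divergence-free an integration by parts gives $\int_{\T^2}\omega^\nu\omega^\nu_{\sqrt\nu} = \int_{\T^2}\nabla u^\nu:\nabla u^\nu_{\sqrt\nu}$; writing $\nabla u^\nu_{\sqrt\nu} = \nabla\rho_{\sqrt\nu}*u^\nu$ and using $\int\nabla\rho_{\sqrt\nu}=0$ to turn this into an integral of velocity increments, Minkowski's inequality yields $\norm{\nabla u^\nu_{\sqrt\nu}(t)}_{L^2_x} \le \norm{\nabla\rho_{\sqrt\nu}}_{L^2}\bigl(\int_{B_{\sqrt\nu}}\mathcal S^\nu_2(y,t)\,dy\bigr)^{1/2} \le C\,\nu^{-1/2}\bigl(\fint_{B_{\sqrt\nu}}\mathcal S^\nu_2(y,t)\,dy\bigr)^{1/2}$. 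Applying Cauchy--Schwarz in $x$ and then in $t$, together with $\int_\delta^T\norm{\nabla u^\nu(t)}_{L^2_x}^2\,dt \le \tfrac1{2\nu}\norm{u^\nu_0}_{L^2_x}^2$ from \eqref{NS en bal}, controls the main term by $C\,(S^\nu_2(\sqrt\nu))^{1/2}\bigl(\nu\int_\delta^T\norm{\nabla u^\nu}_{L^2_x}^2\bigr)^{1/2}$, which is then absorbed into the left-hand side, or directly by $C\,\delta^{-1/2}(S^\nu_2(\sqrt\nu))^{1/2}$ after using instead the enstrophy bound \eqref{D bounded positive times}.

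The delicate step --- and the real content of the statement --- is the error term coming from the mollification. Here one invokes the second-order enstrophy decay \eqref{higher order dynamical}, $\nu^2\int_\delta^T\norm{\nabla\omega^\nu(t)}_{L^2_x}^2\,dt \le \delta^{-1}\norm{u^\nu_0}_{L^2_x}^2$, which quantifies the cost of smoothing at scale $\sqrt\nu$ (through $\norm{\omega^\nu-\omega^\nu_{\sqrt\nu}}_{L^2_x} \le C\sqrt\nu\,\norm{\nabla\omega^\nu}_{L^2_x}$, cf.\ \eqref{moll est 3}), and the remaining factor $\norm{\omega^\nu(t)}_{L^2_x}$ in the error is handled either via \eqref{D bounded positive times} or by reusing the increment bound above so that it too contributes a $(S^\nu_2(\sqrt\nu))^{1/2}$ term. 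Collecting these estimates, and if necessary optimizing the mollification length in $(0,\sqrt\nu\,]$ so that the error is balanced against the structure-function gain, one reaches \eqref{Ad bounded by k41 comp}. I expect this last balance to be the main obstacle: the mollification error degenerates as $\delta\to 0$, and it is precisely the choice of the length $\sqrt\nu$ --- rather than any coarser inertial-range scale --- that keeps it affordable and lets the second-order enstrophy estimate \eqref{higher order dynamical} close the argument.
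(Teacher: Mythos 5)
Your proof of \eqref{k41 comp bounded by AD} is correct and is exactly the paper's argument. For \eqref{Ad bounded by k41 comp}, your treatment of the main term $\nu\int_\delta^T\int\omega^\nu\omega^\nu_{\sqrt\nu}=\nu\int_\delta^T\int\nabla u^\nu:\nabla u^\nu_{\sqrt\nu}$ is also fine and coincides (after integration by parts) with the term $II_{\nu,\alpha}$ in the paper's proof. The gap is in the remainder. Starting from \eqref{fundamental split dissipation} with $\alpha=\sqrt\nu$, the error term there is $C\alpha(\nu\delta)^{-\sfrac12}\|u_0^\nu\|_{L^2_x}^2 = C\delta^{-\sfrac12}\|u_0^\nu\|_{L^2_x}^2$, i.e.\ an $O(1)$ quantity that is \emph{not} controlled by $(S_2^\nu(\sqrt\nu))^{\sfrac12}$. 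This is unavoidable along the route you propose: estimating $\|\omega^\nu-\omega^\nu_{\sqrt\nu}\|_{L^2_x}\le C\sqrt\nu\,\|\nabla\omega^\nu\|_{L^2_x}$ via \eqref{moll est 3} and then invoking \eqref{higher order dynamical} and \eqref{D bounded positive times} reproduces exactly the $C\delta^{-\sfrac12}$ constant, and the ``increment bound'' cannot be reused because the unmollified factor $\|\omega^\nu(t)\|_{L^2_x}=\|\nabla u^\nu(t)\|_{L^2_x}$ is a full gradient, not a finite difference. Optimizing $\alpha\in(0,\sqrt\nu]$ does not close it either: shrinking $\alpha$ makes the remainder $C\alpha\nu^{-\sfrac12}\delta^{-\sfrac12}$ small but inflates the main term by the factor $\sqrt\nu/\alpha$, and the trivial bounds $S_2^\nu(\alpha)\le C\alpha^2\nu^{-1}$ or $S_2^\nu(\alpha)\le \nu\alpha^{-2}S_2^\nu(\sqrt\nu)$ are not enough to compensate.

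The fix, which is what the paper does, is to never let the mollification difference fall on the vorticity. Write
\begin{equation}
\nu\int_\delta^T\int_{\T^2}|\nabla u^\nu|^2=-\nu\int_\delta^T\int_{\T^2}(u^\nu-u^\nu_\alpha)\cdot\Delta u^\nu+\nu\int_\delta^T\int_{\T^2}\nabla u^\nu_\alpha:\nabla u^\nu,
\end{equation}
so that the remainder carries the \emph{velocity} difference, for which one has the direct structure-function bound $\|(u^\nu-u^\nu_\alpha)(t)\|_{L^2_x}^2\le C\fint_{B_\alpha(0)}\mathcal S_2^\nu(y,t)\,dy$ (cf.\ \eqref{mollfied bounds}) rather than the lossy $\alpha\|\nabla u^\nu\|_{L^2_x}$. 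Pairing this with $\nu\|\Delta u^\nu\|_{L^2_x}=\nu\|\nabla\omega^\nu\|_{L^2_x}$ and \eqref{higher order dynamical} gives $C\delta^{-\sfrac12}(S_2^\nu(\sqrt\nu))^{\sfrac12}$ for this term as well, and both pieces then carry the structure-function factor. Your instinct that this remainder is ``the real content of the statement'' is right; the decomposition you chose just puts the difference on the wrong factor.
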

\begin{proof}
    We have
    \begin{equation}\label{trivial bound compact and dissip}
    \int_0^T \mathcal S^\nu_2(y,t)\, dt\leq |y|^2 \int_0^T \|\nabla u^\nu (t)\|^2_{L^2_x}\,dt\qquad \forall y,
       \end{equation}
       from which the bound \eqref{k41 comp bounded by AD} follows by
       $$
       \fint_{B_{\sqrt\nu}(0)}\int_0^T \mathcal S^\nu_2(y,t) \, dtdy\leq \sup_{|y|\leq \sqrt \nu}  \int_0^T \mathcal S^\nu_2(y,t)\, dt\leq \nu\int_0^T \|\nabla u^\nu (t)\|^2_{L^2_x}\,dt.
       $$
    We are left to prove \eqref{Ad bounded by k41 comp}. Denote by $u^\nu_\alpha:=u^\nu*\rho_\alpha$ the space mollification of $u^\nu$. We split
    \begin{align}
        \nu\int_\delta^T\int_{\T^2}  |\nabla u^\nu|^2 
        &= -\underbrace{\nu \int_\delta^T \int_{\T^2} (u^\nu-u^\nu_\alpha) \cdot \Delta u^\nu }_{I_{\nu,\alpha}}  +\underbrace{\nu  \int_\delta^T\int_{\T^2} \nabla u^\nu_\alpha :\nabla u^\nu}_{II_{\nu,\alpha}}.
    \end{align}
    A direct computation shows 
    \begin{equation}
    \label{mollfied bounds}
    \norm{(u^\nu-u^\nu_\alpha)(t)}^2_{L^2_x} +\alpha^2\norm{\nabla u^\nu_\alpha (t)}^2_{L^2_x}\leq C \fint_{B_{\alpha}(0)} \mathcal S^\nu_2(y,t)\, dy,
    \end{equation}
    for some constant $C>0$ independent on $\alpha,\nu$ and $t$. Thus, by the Cauchy--Schwarz inequality and \eqref{higher order dynamical} we get 
\begin{align}
    \left| I_{\nu,\alpha}\right| &\leq \nu \left(\int_\delta^T \|\Delta u^\nu(t)\|^2_{L^2_x}\, dt\right)^\frac12\left(\int_\delta^T  \norm{(u^\nu-u^\nu_\alpha)(t)}^2_{L^2_x}\,dt\right)^\frac12\\
    &\leq  \frac{C}{\sqrt{\delta}} \left(\int_\delta^T \fint_{B_{\alpha}(0)} \mathcal S^\nu_2(y,t)\, dydt\right)^\frac12.
\end{align}
Similarly, by the energy balance \eqref{NS en bal} and \eqref{mollfied bounds}, we deduce 
\begin{align}
    \left| II_{\nu,\alpha}\right| &\leq \nu \left(\int_\delta^T \|\nabla u^\nu(t)\|^2_{L^2_x}\, dt\right)^\frac12\left(\int_\delta^T \|\nabla u^\nu_\alpha(t)\|^2_{L^2_x}\, dt\right)^\frac12\\
    &\leq C \frac{\sqrt{\nu}}{\alpha} \left(\int_\delta^T \fint_{B_{\alpha}(0)} \mathcal S^\nu_2(y,t)\, dydt\right)^\frac12.
\end{align}
Thus, the choice $\alpha=\sqrt{\nu}$ leads to \eqref{Ad bounded by k41 comp}.
\end{proof}

\begin{remark}
    \label{R:K41 compactness and no AD localized}
    Essentially by the same proof, the following   local version of \eqref{Ad bounded by k41 comp} can be obtained
    $$
    \nu\int_\delta^T \int_{\T^2} |\nabla u^\nu|^2\varphi \leq C_\delta\left(\int_\delta^T \fint_{B_{\sqrt\nu}(0)} \int_{\spt \varphi(\cdot,t)} |u^\nu(x+y,t)-u^\nu(x,t)|^2\,dxdydt\right)^\frac12 +O(\sqrt{\nu}),
    $$
    for all $\varphi\in C^\infty(\T^2\times [0,T])$.
\end{remark}

\begin{remark}
  As soon as the initial data are bounded in $L^2(\T^2)$, a direct consequence of \eqref{trivial bound compact and dissip} is
  $$
  \lim_{\nu\rightarrow 0}\frac{\ell_\nu}{\sqrt{\nu}} =0 \qquad \Longrightarrow \qquad \lim_{\nu\rightarrow 0}  S^\nu_{2}(\ell_\nu)=0.
  $$
  Thus the velocity field always retains compactness strictly inside the dissipative range.
\end{remark}

\begin{proof}[Proof of Theorem \ref{T: K41 scale intro}]
    The right-to-left implication in \eqref{k41 comp implies no AD} is a direct consequence of \eqref{k41 comp bounded by AD}. We are left to prove the left-to-right one. Let $\eps>0$. Since we are assuming $\{u^\nu\}_\nu\subset L^2(\T^2)$ to be strongly compact, by \eqref{diss short time equicont} we find $\delta>0$ such that 
    $$
    \limsup_{\nu\rightarrow 0}\nu \int_0^\delta \|\nabla u^\nu(t)\|^2_{L^2_x}\,dt<\eps.
    $$
    Thus
    $$
     \limsup_{\nu\rightarrow 0}\nu \int_0^T \|\nabla u^\nu(t)\|^2_{L^2_x}\,dt < \eps + \limsup_{\nu\rightarrow 0}\nu \int_\delta^T \|\nabla u^\nu(t)\|^2_{L^2_x}\,dt.
    $$
    In particular, if $ S^\nu_2(\sqrt \nu)\rightarrow 0$, by \eqref{Ad bounded by k41 comp} we deduce that the very last term in the above inequality vanishes. It follows 
    $$
     \limsup_{\nu\rightarrow 0}\nu \int_0^T \|\nabla u^\nu(t)\|^2_{L^2_x}\,dt < \eps,
    $$
   which concludes the proof by the arbitrariness of $\eps>0$.
\end{proof}

\begin{remark}
    \label{R:diss scale asymptotic}
    As it is clear from the proof, the antecedent in the left-to-right implication in \eqref{k41 comp implies no AD} can be relaxed by assuming
    $$
    \lim_{\nu\rightarrow 0}S^\nu_2(\ell_\nu) = 0 \qquad \text{for some } \{\ell_\nu\}_\nu \text{ s.t. } \limsup_{\nu\rightarrow 0}\frac{\sqrt{\nu}}{\ell_\nu}<\infty,
    $$
    while the consequence in the right-to-left can strengthen to 
    $$
    \lim_{\nu\rightarrow 0}S^\nu_{2}(\ell_\nu) = 0 \qquad \text{for all } \{\ell_\nu\}_\nu \text{ s.t. } \limsup_{\nu\rightarrow 0}\frac{\ell_\nu}{\sqrt{\nu}}<\infty.
    $$
\end{remark}

When the initial vorticity is a measure, it is possible to obtain the \quotes{concentration} counterparts of \eqref{Ad bounded by k41 comp} in terms of both velocity and vorticity. We recall the definitions of the main objects
\begin{align}
\Lambda^{\nu}_{\rm con} (\ell) &:= \int_0^T \left(\sup_{x\in \T^2}\int_{B_{\ell}(x)} | u^\nu(y,t)-u (y,t)|^2\,dy \right)^\frac12 dt,\\
 Q^\nu_{\rm con}(\ell)&:=\int_0^T \left(\sup_{x\in \T^2}\int_{B_{\ell}(x)} \left| u^\nu(y,t)-\fint_{B_{\ell}(x)} u^\nu (z,t) \,dz\right|^2\,dy \right)^\frac12 dt,\\
    \Omega^{\nu}_{\rm con} (\ell)&:= \int_0^T \left(\sup_{x\in \T^2}\int_{B_{\ell}(x)} |\omega^\nu (y,t)|\,dy \right)dt. 
\end{align}
\begin{proposition}
    \label{P:k41 concentration quantitative bounds}
    Let  $\{u_0^\nu\}_{\nu}\subset L^2(\T^2)$ be a bounded sequence of divergence-free vector fields such that $\{\omega^\nu_0\}_\nu \subset \mathcal M(\T^2)$ is  bounded. Let $\{u^\nu\}_{\nu}$ be the corresponding sequence of Leray--Hopf solutions to \eqref{NS}. There exists a constant $C>0$ such that
    \begin{align}
        \nu \int_\delta^T\|\omega^\nu(t)\|^2_{L^2_x}\,dt&\leq C\left(\frac{1}{\eps}\Lambda_{\rm con}^\nu(\sqrt \nu) +\int_0^T\left(\sup_{x\in \T^2}\int_{B_{\sqrt{\nu}}(x)} |u(y,t)|^2\,dy\right)^\frac12dt+\sqrt{\frac{\eps}{\delta}}\right),\label{AD bounded by concentration of veloc} \\
        \nu \int_\delta^T\|\omega^\nu(t)\|^2_{L^2_x}\,dt  &\leq C\left(\frac{1}{\eps}Q_{\rm con}^\nu(\sqrt \nu) +\sqrt{\frac{\eps}{\delta}}\right)\label{very ugly estimate bmo}
    \end{align}
    and
     \begin{equation}
        \label{AD bounded by concentration of vortic}
        \nu \int_\delta^T\|\omega^\nu(t)\|^2_{L^2_x}\,dt\leq C\left(\frac{1}{\eps}\Omega_{\rm con}^\nu(\sqrt \nu)+\sqrt{\frac{\eps}{\delta}}\right),
    \end{equation}
    for all $\eps,\delta,\nu \in (0,1)$. Conversely, there exists a constant $C>0$ such that
    \begin{align}\label{AD bound bmo con}
    Q^\nu_{\rm con}(\sqrt \nu)\leq C \left(\nu \int_0^T\|\nabla u^\nu(t)\|^2_{L^2_x}\,dt\right)^\frac12
    \end{align}
    and
    \begin{align}\label{AD bound vort con}
    \Omega^\nu_{\rm con}(\sqrt \nu)\leq C \left(\nu \int_0^T\|\nabla u^\nu(t)\|^2_{L^2_x}\,dt\right)^\frac12,
    \end{align}
    for all $\nu\in (0,1)$.
\end{proposition}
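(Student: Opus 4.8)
The plan is to derive the three forward estimates \eqref{AD bounded by concentration of veloc}, \eqref{very ugly estimate bmo} and \eqref{AD bounded by concentration of vortic} from the splitting \eqref{fundamental split dissipation} of \cref{P:dissipation positive times}, used with the mollification parameter $\alpha=\sqrt{\eps\nu}$. Since $\{u_0^\nu\}_\nu$ is bounded in $L^2(\T^2)$, the error term in \eqref{fundamental split dissipation} is $C\frac{\alpha}{\sqrt{\nu\delta}}\|u^\nu_0\|^2_{L^2_x}=C\sqrt{\eps/\delta}$, so, after $\omega^\nu\omega^\nu_\alpha\le|\omega^\nu|\,|\omega^\nu_\alpha|$, everything reduces to bounding $\nu\int_\delta^T\int_{\T^2}|\omega^\nu|\,|\omega^\nu_\alpha|$ by the appropriate concentration functional at scale $\sqrt\nu$. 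The point of the choice $\alpha=\sqrt{\eps\nu}$ is twofold: it gives $\nu/\alpha^2=1/\eps$, which is exactly where the $1/\eps$ prefactors come from, and since $\eps<1$ it keeps $\alpha\le\sqrt\nu$, so that every ball $B_\alpha(\cdot)$ can be enlarged to $B_{\sqrt\nu}(\cdot)$.

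For \eqref{AD bounded by concentration of vortic} I would argue as in the proof of \cref{T: measure vort}: from $0\le\rho_\alpha\le C\alpha^{-2}$ supported in $B_\alpha(0)$ one gets $|\omega^\nu_\alpha(x,t)|\le C\alpha^{-2}\int_{B_\alpha(x)}|\omega^\nu(y,t)|\,dy\le C\alpha^{-2}\int_{B_{\sqrt\nu}(x)}|\omega^\nu(y,t)|\,dy$; multiplying by $|\omega^\nu|$, integrating in $x$ and using $\|\omega^\nu(t)\|_{L^1_x}\le\|\omega^\nu_0\|_{\mathcal M_x}$ together with $\nu/\alpha^2=1/\eps$ yields $\frac C\eps\Omega^\nu_{\rm con}(\sqrt\nu)$. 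For \eqref{very ugly estimate bmo} and \eqref{AD bounded by concentration of veloc} I would instead write $\omega^\nu_\alpha=\curl u^\nu_\alpha$, so that $|\omega^\nu_\alpha(x)|\le\sqrt2\,|\nabla u^\nu_\alpha(x)|$, and use $\int\nabla\rho_\alpha=0$ to write $\nabla u^\nu_\alpha(x)=\int(\nabla\rho_\alpha)(x-y)\otimes\big(u^\nu(y)-\fint_{B_\alpha(x)}u^\nu\big)\,dy$; Cauchy--Schwarz and $\|\nabla\rho_\alpha\|_{L^2}=C\alpha^{-2}$ then give $|\omega^\nu_\alpha(x)|\le C\alpha^{-2}\big(\int_{B_\alpha(x)}|u^\nu-\fint_{B_\alpha(x)}u^\nu|^2\big)^{\frac12}$. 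Since the ball average minimizes the $L^2$ deviation and the domain of integration may be enlarged, this last quantity is bounded by $C\alpha^{-2}\big(\int_{B_{\sqrt\nu}(x)}|u^\nu-\fint_{B_{\sqrt\nu}(x)}u^\nu|^2\big)^{\frac12}$; multiplying by $|\omega^\nu|$, integrating and using $\|\omega^\nu(t)\|_{L^1_x}\le M$ gives \eqref{very ugly estimate bmo}. For \eqref{AD bounded by concentration of veloc} I would split $u^\nu=(u^\nu-u)+u$ before Cauchy--Schwarz: the $(u^\nu-u)$ term is treated as above without subtracting any average and produces $\frac C\eps\Lambda^\nu_{\rm con}(\sqrt\nu)$, while the $u$ term, estimated with $\int_{B_\alpha(x)}|u-\fint_{B_\alpha(x)}u|^2\le\int_{B_{\sqrt\nu}(x)}|u(y,t)|^2\,dy$, produces the second term $\int_0^T\big(\sup_x\int_{B_{\sqrt\nu}(x)}|u(y,t)|^2\,dy\big)^{\frac12}dt$.

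The two converse bounds \eqref{AD bound bmo con} and \eqref{AD bound vort con} are elementary and use only $\|\omega^\nu(t)\|_{L^2_x}=\|\nabla u^\nu(t)\|_{L^2_x}$. For \eqref{AD bound bmo con} I would invoke the Poincar\'e inequality on $B_{\sqrt\nu}(x)$, giving $\int_{B_{\sqrt\nu}(x)}|u^\nu-\fint_{B_{\sqrt\nu}(x)}u^\nu|^2\le C\nu\int_{B_{\sqrt\nu}(x)}|\nabla u^\nu|^2\le C\nu\|\nabla u^\nu(t)\|^2_{L^2_x}$; for \eqref{AD bound vort con} a single Cauchy--Schwarz gives $\int_{B_{\sqrt\nu}(x)}|\omega^\nu(y,t)|\,dy\le|B_{\sqrt\nu}|^{\frac12}\|\omega^\nu(t)\|_{L^2_x}=C\sqrt\nu\,\|\nabla u^\nu(t)\|_{L^2_x}$. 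In both cases one takes the supremum over $x$, then (only for \eqref{AD bound bmo con}) the square root, and integrates in $t$ by Cauchy--Schwarz; this leaves $Q^\nu_{\rm con}(\sqrt\nu)$, respectively $\Omega^\nu_{\rm con}(\sqrt\nu)$, on the left, while the right-hand side becomes $C\sqrt\nu\int_0^T\|\nabla u^\nu(t)\|_{L^2_x}\,dt\le C\big(\nu\int_0^T\|\nabla u^\nu(t)\|^2_{L^2_x}\,dt\big)^{\frac12}$.

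The only genuinely delicate point is the bookkeeping of scales in the forward estimates — that the single choice $\alpha=\sqrt{\eps\nu}$ simultaneously generates the factor $1/\eps$ and stays below the dissipative scale $\sqrt\nu$ — together with, for \eqref{AD bounded by concentration of veloc}, the need to separate cleanly the contribution of the weak limit $u$ from that of the defect $u^\nu-u$; all the rest is a careful application of Young's and Cauchy--Schwarz inequalities and of \eqref{fundamental split dissipation}.
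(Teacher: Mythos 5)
Your proof is correct and follows essentially the same route as the paper's: the three forward bounds come from \eqref{fundamental split dissipation} with $\alpha=\sqrt{\eps\nu}$, the product $\omega^\nu\omega^\nu_\alpha$ controlled by $\|\omega^\nu(t)\|_{L^1_x}\|\omega^\nu_\alpha(t)\|_{L^\infty_x}$ via \eqref{vortic stays a measure}, and three different $L^\infty_x$ bounds on $\omega^\nu_\alpha$ obtained by moving the derivative onto the mollifier, while the converse bounds use Poincar\'e and Cauchy--Schwarz exactly as in the paper. The only cosmetic differences are that you pass through $|\omega^\nu_\alpha|\leq \sqrt{2}\,|\nabla u^\nu_\alpha|$ and subtract the average over $B_\alpha(x)$ before invoking mean-minimization and domain enlargement, whereas the paper works directly with $\omega^\nu_\alpha(x)=\int u^\nu(y)\cdot\nabla^\perp\rho_\alpha(x-y)\,dy$ and subtracts $\fint_{B_{\sqrt{\nu}}(x)}u^\nu$ from the start; these are equivalent.
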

\begin{proof}
    Denote by $\omega^\nu_\alpha:=\omega^\nu*\rho_\alpha$ the space mollification of $\omega^\nu$. By \eqref{fundamental split dissipation} we estimate 
    \begin{align}
         \nu \int_\delta^T\|\omega^\nu(t)\|^2_{L^2_x}\,dt&\leq C\left( \nu \int_\delta^T \int_{\T^2}\omega^\nu\omega_\alpha^\nu +\frac{\alpha}{\sqrt{\nu\delta}}\right)\\
         &\leq C\left( \nu \int_\delta^T \|\omega^\nu(t)\|_{L^1_x}\|\omega_\alpha^\nu(t)\|_{L^\infty_x}\,dt +\frac{\alpha}{\sqrt{\nu\delta}}\right)\\
          &\leq C\left( \nu \int_\delta^T \|\omega_\alpha^\nu(t)\|_{L^\infty_x}\,dt +\frac{\alpha}{\sqrt{\nu\delta}}\right), \label{a bit ugly bound}
    \end{align}
    where to obtain the last inequality we have used \eqref{vortic stays a measure}. We need to estimate $\|\omega_\alpha^\nu(t)\|_{L^\infty_x}$. This can be done in different ways, leading to \eqref{AD bounded by concentration of veloc}, \eqref{very ugly estimate bmo} and \eqref{AD bounded by concentration of vortic} respectively. 

 We start by 
\begin{align}
\|\omega_\alpha^\nu(t)\|_{L^\infty_x}&=\sup_{x\in\T^2}\left| \int\omega^\nu(y,t) \rho_\alpha(x-y)\,dy\right|\\
&=\sup_{x\in\T^2}\left| \int u^\nu(y,t) \cdot \nabla^\perp\rho_\alpha(x-y)\,dy\right|\\
&\leq \frac{C}{\alpha^3}\sup_{x\in\T^2} \int_{B_\alpha(x)} |u^\nu(y,t)|\,dy\\
&\leq \frac{C}{\alpha^2}\left(\sup_{x\in\T^2} \int_{B_\alpha(x)} |u^\nu(y,t)|^2\,dy\right)^\frac12\\
&\leq \frac{C}{\alpha^2}\sup_{x\in\T^2} \left( \int_{B_\alpha(x)} |u^\nu(y,t) - u(y,t)|^2\,dy+ \int_{B_\alpha(x)} |u(y,t)|^2\,dy\right)^\frac12 \label{L infty est on mollif vort}
\end{align}
    By plugging this estimate into \eqref{a bit ugly bound}, the bound \eqref{AD bounded by concentration of veloc} follows by choosing $\alpha=\sqrt{ \eps \nu}$ for an arbitrary $\eps\in (0,1)$.
    
    A second choice is 
    \begin{align}
\|\omega_\alpha^\nu(t)\|_{L^\infty_x}&=\sup_{x\in\T^2}\left| \int u^\nu(y,t) \cdot \nabla^\perp\rho_\alpha(x-y)\,dy\right|\\
&=\sup_{x\in\T^2}\left| \int \left(u^\nu(y,t) - \fint_{B_{\sqrt{\nu}}(x)} u^\nu (z,t) \,dz\right) \cdot \nabla^\perp\rho_\alpha(x-y)\,dy\right|\\
&\leq \frac{C}{\alpha^2}\left(\sup_{x\in\T^2} \int_{B_\alpha(x)} \left|u^\nu(y,t)- \fint_{B_{\sqrt{\nu}}(x)} u^\nu (z,t) \,dz\right|^2\,dy\right)^\frac12.
    \end{align}
    Then \eqref{very ugly estimate bmo} follows by choosing $\alpha=\sqrt{ \eps \nu}$ for an arbitrary $\eps\in (0,1)$.

   The third and final choice is to bound it as 
 \begin{align}
\|\omega_\alpha^\nu(t)\|_{L^\infty_x}&\leq \sup_{x\in\T^2} \int|\omega^\nu(y,t)| \rho_\alpha(x-y)\,dy\leq \frac{C}{\alpha^2}\sup_{x\in\T^2} \int_{B_\alpha(x)} |\omega^\nu(y,t)|\,dy,\label{sup norm mollified vort}
    \end{align}
    from which \eqref{AD bounded by concentration of vortic} follows by choosing $\alpha=\sqrt{ \eps \nu}$ again.

    By the Poincaré inequality inequality we get 
$$
\int_{B_{\ell}(x)} \left| u^\nu(y,t)-\fint_{B_{\ell}(x)} u^\nu (z,t) \,dz\right|^2\,dy\leq C \ell^2 \int_{B_\ell (x)}|\nabla u^\nu(y,t)|^2\,dy.
$$
Then
$$
Q^\nu_{\rm con}(\ell)\leq C \left(\ell^2 \int_0^T\|\nabla u^\nu(t)\|_{L^2_x}^2\,dt\right)^\frac12\qquad \forall \ell>0,
$$
from which \eqref{AD bound bmo con} follows by choosing $\ell=\sqrt{\nu}$.
    
    By using twice the Cauchy--Schwarz inequality we get
    \begin{align}
        \Omega^\nu_{\rm con} (\ell)&=\int_0^T \left(\sup_{x\in \T^2}\int_{B_{\ell}(x)} |\omega^\nu (y,t)|\,dy \right)dt\leq C \ell \int_0^T \|\omega^\nu(t)\|_{L^2_x}\, dt\\
        &\leq C \left(\ell^2 \int_0^T \|\omega^\nu(t)\|^2_{L^2_x}\, dt\right)^\frac12=C \left(\ell^2 \int_0^T \|\nabla u^\nu(t)\|^2_{L^2_x}\, dt\right)^\frac12.
    \end{align}
    Then, the choice $\ell=\sqrt{\nu}$ proves \eqref{AD bound vort con}.
\end{proof}

\begin{remark}
    \label{R:K41 concentration and no AD localized}
    By essentially following the same proof, \eqref{AD bounded by concentration of veloc}, \eqref{very ugly estimate bmo} and \eqref{AD bounded by concentration of vortic} can be localized as
    \begin{align}
    \nu\int_\delta^T \int_{\T^2} |\omega^\nu|^2\varphi &\leq C\left(\frac{1}{\eps}\int_\delta^T \left( \sup_{x\in \spt \varphi (\cdot,t)}\int_{B_{\sqrt{\eps\nu}}(x)} |u^\nu(y,t)|^2\,dy \right)^\frac12 dt +\sqrt{\frac{\eps}{\delta}}\right)\\
    \nu\int_\delta^T \int_{\T^2} |\omega^\nu|^2\varphi &\leq C\left(\frac{1}{\eps}\int_\delta^T  \left( \sup_{x\in \spt \varphi (\cdot,t)}\int_{B_{\sqrt{\eps\nu}}(x)} \left|u^\nu(y,t)- \fint_{B_{\sqrt{\nu}}(x)} u^\nu (z,t) \,dz\right|^2\,dy \right)^\frac12 dt +\sqrt{\frac{\eps}{\delta}}\right)     \end{align}
    and
    $$
     \nu\int_\delta^T \int_{\T^2} |\omega^\nu|^2\varphi \leq C\left(\frac{1}{\eps}\int_\delta^T \left( \sup_{x\in \spt \varphi (\cdot,t)}\int_{B_{\sqrt{\eps\nu}}(x)} |\omega^\nu(y,t)|\,dy \right) dt +\sqrt{\frac{\eps}{\delta}}\right),
    $$
    for all $\varphi\in C^\infty(\T^2\times [0,T])$ and all $\eps,\delta,\nu>0$.
\end{remark}
We can now prove Theorem \ref{T: k41 scale concentration} and Theorem \ref{T:new full charact at dissipative scale}.
\begin{proof}[Proof of Theorem \ref{T: k41 scale concentration}]
Let $\eps>0$. Since we are assuming $\{u^\nu_0\}_\nu\subset L^2(\T^2)$ to be strongly compact, by \eqref{diss short time equicont} we find $\delta>0$ such that 
    $$
    \limsup_{\nu\rightarrow 0}\nu \int_0^\delta \|\nabla u^\nu(t)\|^2_{L^2_x}\,dt<\eps.
    $$
    Thus
    \begin{equation}\label{usual boring split}
     \limsup_{\nu\rightarrow 0}\nu \int_0^T \|\nabla u^\nu(t)\|^2_{L^2_x}\,dt < \eps + \limsup_{\nu\rightarrow 0}\nu \int_\delta^T \|\nabla u^\nu(t)\|^2_{L^2_x}\,dt.
       \end{equation}
       Let $\tilde \eps\in(0,1)$. By \eqref{AD bounded by concentration of veloc} we get 
       \begin{align}
            \nu \int_\delta^T\|\nabla u^\nu(t)\|^2_{L^2_x}\,dt&\leq C\left(\frac{1}{\tilde \eps}\Lambda_{\rm con}^\nu(\sqrt \nu) +\int_0^T\left(\sup_{x\in \T^2}\int_{B_{\sqrt{\nu}}(x)} |u(y,t)|^2\,dy\right)^\frac12dt+\sqrt{\frac{\tilde \eps}{\delta}}\right).
       \end{align}
       Since $u\in L^\infty([0,T];L^2(\T^2))$, by the dominated convergence theorem and the absolute continuity of the Lebesgue integral, we deduce 
       $$
       \lim_{\nu\rightarrow 0}\int_0^T\left(\sup_{x\in \T^2}\int_{B_{\sqrt{\nu}}(x)} |u(y,t)|^2\,dy\right)^\frac12dt=0.
       $$
       Together with the assumption $ \Lambda^{\nu}_{\rm con} (\sqrt \nu)\rightarrow 0$, this yields to 
       $$
       \limsup_{\nu\rightarrow 0}  \nu \int_\delta^T\|\nabla u^\nu(t)\|^2_{L^2_x}\,dt\leq C \sqrt{\frac{\tilde \eps}{\delta}}. 
       $$
       Then \eqref{usual boring split} becomes 
       $$
        \limsup_{\nu\rightarrow 0}  \nu \int_0^T\|\nabla u^\nu(t)\|^2_{L^2_x}\,dt<\eps+C\sqrt{\frac{\tilde \eps}{\delta}}.
       $$
       Note that $\delta$ does not depend on $\tilde \eps$. Thus, we can first send $\tilde\eps\rightarrow 0$ and then $\eps\rightarrow 0$, concluding the proof of \eqref{condition on veloc L2}. 
       
       The left-to-right implication in \eqref{condition on vort L1} follows by the very same argument given above by using \eqref{AD bounded by concentration of vortic} instead of \eqref{AD bounded by concentration of veloc}, while the right-to-left implication in \eqref{condition on vort L1} is a direct consequence of \eqref{AD bound vort con}. Details are left to the reader.
\end{proof}

\begin{remark}
    \label{R:diss scale asymptotic vort}
    The right-to-left implication in \eqref{condition on vort L1} can be strengthen to
    $$
    \lim_{\nu\rightarrow 0}\nu \int_0^T \|\nabla u^\nu(t)\|^2_{L^2_x}\,dt=0\quad \Longrightarrow \quad \lim_{\nu\rightarrow 0} \Omega^\nu_{\rm con} (\ell_\nu)=0\quad \text{for all } \{\ell_\nu\}_\nu \text{ s.t. } \limsup_{\nu\rightarrow 0}\frac{\ell_\nu}{\sqrt{\nu}}<\infty.
    $$
\end{remark}
\begin{proof}[Proof of Theorem \ref{T:new full charact at dissipative scale}]
The second equivalence in \eqref{new full equivalence equation} has been already proved in Theorem \ref{T: k41 scale concentration}. We are left to show 
\begin{equation}
\label{Q and no AD equivalence proof}
\lim_{\nu\rightarrow 0} Q^\nu_{\rm con}(\sqrt{\nu}) =0  \qquad \Longleftrightarrow \qquad \lim_{\nu\rightarrow 0} \nu \int_0^T \|\nabla u^\nu (t)\|^2_{L^2_x}\,dt=0.
\end{equation}
The right-to-left implication in \eqref{Q and no AD equivalence proof} follows from \eqref{AD bound bmo con}. As already done several times, in view of Proposition \ref{P:dissipation short times quantitative}, to obtain the converse implication it is enough to prove 
\begin{equation}
    \label{Q and no AD positive times}
    \lim_{\nu\rightarrow 0} Q^\nu_{\rm con}(\sqrt{\nu}) =0  \qquad \Longrightarrow \qquad \lim_{\nu\rightarrow 0} \nu \int_\delta^T \|\omega^\nu (t)\|^2_{L^2_x}\,dt=0\quad \forall\delta >0.
\end{equation}
This is a direct consequence of \eqref{very ugly estimate bmo}, concluding the proof.
\end{proof}

\begin{remark}\label{R:Q bmo refined scales}
    As for Remark \ref{R:diss scale asymptotic vort}, the right-to-left implication in the first equivalence in \eqref{new full equivalence equation} can be refined as 

    $$
    \lim_{\nu\rightarrow 0}\nu \int_0^T \|\nabla u^\nu(t)\|^2_{L^2_x}\,dt=0\quad \Longrightarrow \quad \lim_{\nu\rightarrow 0} Q^\nu_{\rm con} (\ell_\nu)=0\quad \text{for all } \{\ell_\nu\}_\nu \text{ s.t. } \limsup_{\nu\rightarrow 0}\frac{\ell_\nu}{\sqrt{\nu}}<\infty.
    $$
\end{remark}

\begin{remark}
As it is clear from the proofs, all the implications 
$$
 \lim_{\nu\rightarrow 0}\nu \int_0^T \|\nabla u^\nu(t)\|^2_{L^2_x}\,dt=0\qquad \Longrightarrow \qquad \left\{\begin{array}{ll}
\lim_{\nu\rightarrow 0}  S^\nu_{2}(\sqrt{\nu}) =0 \\ 
\lim_{\nu\rightarrow 0} \Omega^\nu_{\rm con}(\sqrt{\nu}) =0\\
\lim_{\nu\rightarrow 0} Q^\nu_{\rm con}(\sqrt{\nu}) =0
\end{array}\right.
$$
hold for any sequence of vector fields $\{u^\nu\}_\nu$, thus independently on any uniform regularity and any PDE. On the other hand, the reverse implications rely on \eqref{NS}.
\end{remark}

\section{Quantitative rates and dissipation life-span}\label{S:rates}
By Lemma \ref{L:beta_int_on_ball} and Proposition \ref{P:split_vort_L1_pos} it is possible to quantify the uniform in time vorticity decay on balls  whenever the singular part is non-negative.
\begin{proposition}
    \label{P: vort on balls Delort}
    Let $\{u_0^\nu\}_{\nu}\subset L^2(\T^2)$ be a bounded sequence of divergence-free vector fields such that  $\{\omega^\nu_0\}_{\nu}\subset \mathcal M(\T^2)$ admits a decomposition $\omega_0^\nu =f^\nu_0 + \mu_0^\nu$ with $\mu^\nu_0\geq 0$ and   $\left\{ f^\nu_0\right\}_{\nu}\subset L^1(\T^2)$ satisfying 
$$
\sup_{\nu>0}\int_{\T^2} \beta\left(|f^\nu_0(x)| \right)\,dx<\infty
$$    
for some $\beta\in \mathcal K$, the set defined in \eqref{def superlinear beta}.
Denote by 
     $$
 M:=\sup_{\nu>0 }\left( \|u^\nu_0\|_{L^2_x}+ \int_{\T^2} \beta\left(|f^\nu_0(x)| \right)\,dx\right).
 $$
Let $G_\beta$ be the function given by Definition \ref{D:inverse_beta}. There exists a constant $C>0$ depending only on $M$ and a value $r_0\in (0,1)$ depending only on $\beta$ such that the sequence of vorticities $\{\omega^\nu\}_{\nu}$ of the corresponding Leray--Hopf solutions satisfies
\begin{equation}
   \sup_{x,t,\nu} \int_{B_r(x)} |\omega^\nu(y,t)|\,dy\leq C\left( G_\beta(r) + \frac{1}{\sqrt{\log \frac{1}{r}}}\right)\qquad \forall0< r<r_0.
\end{equation}
\end{proposition}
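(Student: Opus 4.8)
The plan is to combine the positive-sign vorticity splitting of \cref{P:split_vort_L1_pos}, the equi-integrability bound \cref{L:beta_int_on_ball}, and a logarithmic averaging argument that uses only the uniform $L^2$ bound on the velocity. Fix $t>0$, so that $\omega^\nu(\cdot,t)$ is a genuine $L^1$ function and $u^\nu(\cdot,t)$ is smooth by instantaneous regularisation, and write $\omega^\nu=f^\nu+\mu^\nu$ with $\mu^\nu\geq 0$ and $\{f^\nu\}_\nu$ bounded in $L^\infty([0,T];L^1(\T^2))$ as in \cref{P:split_vort_L1_pos}. By \eqref{beta_f_estimate} and the definition of $M$ we have $\sup_{t,\nu}\int_{\T^2}\beta(|f^\nu(x,t)|)\,dx\leq M$, so \cref{L:beta_int_on_ball} yields $\int_{B_\rho(x)}|f^\nu(y,t)|\,dy\leq C_1\,G_\beta(\rho^2)$ for all $\rho$ below a threshold $\bar r$ depending only on $\beta$, uniformly in $x,t,\nu$. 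Using \eqref{ineq_delort_pos} together with $\omega^\nu=f^\nu+\mu^\nu$ and $\mu^\nu\geq 0$, one obtains $\int_{B_r(x)}|\omega^\nu|\leq 3\int_{B_r(x)}|f^\nu|+\int_{B_r(x)}\mu^\nu$, so the whole problem reduces to controlling $\int_{B_r(x)}\mu^\nu$.

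For this term I would exploit the monotonicity coming from $\mu^\nu\geq 0$: the map $s\mapsto\int_{B_s(x)}\mu^\nu$ is non-decreasing, hence $\int_{B_r(x)}\mu^\nu\leq\int_{B_{s}(x)}\mu^\nu$ for any $s\in[r,\sqrt r]$, and it is enough to find one good radius $s$ in that range. Since $\int_{B_s(x)}\mu^\nu=\int_{B_s(x)}\omega^\nu-\int_{B_s(x)}f^\nu$, Stokes' theorem gives $\big|\int_{B_s(x)}\omega^\nu\big|\leq\int_{\partial B_s(x)}|u^\nu|\,d\sigma$, while $\int_{B_s(x)}|f^\nu|\leq\int_{B_{\sqrt r}(x)}|f^\nu|\leq C_1\,G_\beta(r)$ once $\sqrt r<\bar r$. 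The only genuinely delicate point is the selection of $s$: integrating the scale-invariant quantity $s^{-1}\int_{\partial B_s(x)}|u^\nu|\,d\sigma$ over $s\in[r,\sqrt r]$ and applying Cauchy--Schwarz first on the circle $\partial B_s(x)$ and then in $ds$, together with the coarea formula, bounds this integral by $C\,(\log\tfrac1r)^{1/2}\,\|u^\nu(\cdot,t)\|_{L^2_x}\leq C M\,(\log\tfrac1r)^{1/2}$; since $\int_r^{\sqrt r}s^{-1}\,ds=\tfrac12\log\tfrac1r$, there must exist $s^*\in[r,\sqrt r]$ with $\int_{\partial B_{s^*}(x)}|u^\nu|\,d\sigma\leq C M\,(\log\tfrac1r)^{-1/2}$.

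Assembling everything gives $\int_{B_r(x)}\mu^\nu\leq\int_{B_{s^*}(x)}\mu^\nu\leq C M\,(\log\tfrac1r)^{-1/2}+C_1\,G_\beta(r)$, and therefore $\int_{B_r(x)}|\omega^\nu|\leq 3C_1\,G_\beta(r^2)+C_1\,G_\beta(r)+C M\,(\log\tfrac1r)^{-1/2}$; bounding $G_\beta(r^2)\leq G_\beta(r)$ (legitimate since $G_\beta$ is increasing and $r<1$, cf.\ \cref{P:inverse beta well defined}) and taking the supremum over $x$, $t>0$ and $\nu$ yields the statement, with $r_0$ depending only on $\beta$ (through $\bar r$ and the domain of $G_\beta$) and $C$ depending only on $M$. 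The main obstacle, and the only step that is not bookkeeping, is precisely this logarithmic averaging: the scale-invariant weight $s^{-1}$ on the range $[r,\sqrt r]$ is what converts the finite energy $\|u^\nu(\cdot,t)\|_{L^2_x}\leq M$ into the slowly decaying gain $(\log\tfrac1r)^{-1/2}$, which one expects to be sharp in the absence of any control on the singular part of the vorticity beyond $L^1$ in space.
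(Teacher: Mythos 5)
Your proof is correct and is essentially the paper's argument in a different guise: the paper tests $\omega^\nu$ directly against the logarithmic cut-off $\chi_r$ (equal to $1$ on $B_r(x)$, decaying like $\log(|y-x|/\sqrt r)/\log\sqrt r$ on the annulus $B_{\sqrt r}(x)\setminus B_r(x)$), whose gradient has $L^2$ norm $\lesssim (\log\tfrac1r)^{-1/2}$, whereas you select a good circle $s^*\in[r,\sqrt r]$ by averaging against the same scale-invariant weight $s^{-1}$ — these are the continuous and pigeonhole versions of one and the same logarithmic-gain mechanism, and your extra monotonicity step $\int_{B_r}\mu^\nu\le\int_{B_{s^*}}\mu^\nu$ plays exactly the role of the inequality $\int_{B_r}|\omega^\nu|\le\int|\omega^\nu|\chi_r$ in the paper. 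All supporting ingredients (the sign splitting from \cref{P:split_vort_L1_pos}, the propagated equi-integrability \eqref{beta_f_estimate} fed into \cref{L:beta_int_on_ball} at radius $\sqrt r$ to produce $G_\beta(r)$, and $\|u^\nu(t)\|_{L^2_x}\le\|u^\nu_0\|_{L^2_x}\le M$) are used exactly as in the paper, so the argument is sound.
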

\begin{proof}
    Let $x\in \T^2$. We define the cut-off function 
    \begin{align}
\chi_r(y):=
\begin{cases}
 1 & \text{ if $y\in B_r(x)$}\\
  \frac{\log \frac{|y-x|}{\sqrt{r}}}{\log \sqrt{r}}& \text{ if $y\in B_{\sqrt{r}}(x)\setminus B_r(x)$}\\
0 &\text{ if $y\in B_{\sqrt{r}}^c(x)$.}
\end{cases}
\end{align}
A direct computation shows
$$
\int |\nabla \chi_r(y)|^2\,dy=\frac{1}{|\log \sqrt r|^2} \int_{ B_{\sqrt{r}}(x)\setminus B_r(x)}\frac{1}{|y|^2}\,dy\leq \frac{C}{\log \frac{1}{r}}\qquad \forall 0<r<\frac12.
$$
Thus, by Proposition \ref{P:split_vort_L1_pos} and Lemma \ref{L:beta_int_on_ball} we conclude
\begin{align}
    \int_{B_r(x)} |\omega^\nu(y,t)|\,dy&\leq \int  |\omega^\nu(y,t)|\chi_r(y)\,dy\\
    &\leq 2 \int  |f^\nu(y,t)|\chi_r(y)\,dy + \int  \omega^\nu(y,t)\chi_r(y)\,dx\\
    &\leq 2 \int_{B_{\sqrt{r}}(x)}  |f^\nu(y,t)|\,dy - \int u^\nu(y,t)\cdot \nabla^\perp \chi_r(y)\,dy\\
    &\leq C G_\beta(r) + \|u^\nu(t)\|_{L^2_x} \|\nabla \chi_r\|_{L_x^2}\\
    &\leq C\left( G_\beta(r) + \frac{1}{\sqrt{\log \frac{1}{r}}}\right) \qquad\forall 0<r<\min\left(r_0,\frac{1}{2}\right),
\end{align}
where $r_0>0$ is the value given by Lemma \ref{L:beta_int_on_ball}. Note that to obtain the last inequality we have also used $\|u^\nu(t)\|_{L^2_x}\leq \|u^\nu_0\|_{L^2_x}$, together with the assumption $\{u^\nu_0\}_\nu\subset L^2(\T^2)$ bounded.
\end{proof}

We can now prove Theorem \ref{T: rates intro}.
\begin{proof}[Proof of Theorem \ref{T: rates intro}]
Let $\alpha>0$. By using \eqref{vortic stays a measure} and \eqref{sup norm mollified vort} into \eqref{refined fundamental split dissipation} we get 
$$
  \nu\int_\delta^T\|\omega^\nu(t)\|^2_{L^2_x}\,dt\leq C\left(  \frac{ \nu T}{\alpha^2} \sup_{x,t,\nu}\int_{B_\alpha(x)}  |\omega^\nu(y,t)| \, dy + \frac{\alpha^2}{\nu\delta} \right),
$$
for some $C>0$ depending only on $M_1$, the constant defined in the statement of Theorem \ref{T: rates intro}. Denote by $\tilde G_\beta(s):=G_\beta(s)+\left(\log\frac{1}{s}\right)^{-\sfrac{1}{2}}$ for convenience. By Proposition \ref{P: vort on balls Delort} there exists $\alpha_0$ depending only on $\beta$ such that 
\begin{equation}
    \label{quant bound ugly}
     \nu\int_\delta^T\|\omega^\nu(t)\|^2_{L^2_x}\,dt\leq C\left(  \frac{ \nu T}{\alpha^2} \tilde G_\beta(\alpha) + \frac{\alpha^2}{\nu\delta} \right)\qquad \forall 0<\alpha<\alpha_0,
\end{equation}
where the constant $C>0$ now depends on $M_1$ and $M_2$. Set $\nu_0:=\alpha_0^2$ and let $\eps\in (0,1)$ be arbitrary. By choosing $\alpha=\sqrt{\eps \nu}$ in \eqref{quant bound ugly} we get\footnote{Note that $\tilde G_\beta(\sqrt{\eps \nu})\leq \tilde G_\beta(\sqrt{\nu})$ since $\tilde G_\beta$ is monotone non-decreasing and $\eps\in (0,1)$.}
$$
  \nu\int_\delta^T\|\omega^\nu(t)\|^2_{L^2_x}\,dt\leq C\left(  \frac{T}{\eps} \tilde G_\beta(\sqrt{\nu}) + \frac{\eps}{\delta} \right)\qquad \forall 0<\nu<\nu_0.
$$
We wish to choose $\eps=\sqrt{\delta T \tilde G_\beta(\sqrt{\nu})}$ as it optimizes the above inequality. Since we required $\eps<1$, this choice is certainly possible if \eqref{restriction on T and nu} holds. Thus, if \eqref{restriction on T and nu} is satisfied,  we achieved
$$
\nu\int_\delta^T\|\omega^\nu(t)\|^2_{L^2_x}\,dt\leq C\sqrt{\frac{T}{\delta}\tilde G_\beta(\sqrt\nu)} \qquad \forall 0<\nu<\nu_0.
$$
This proves \eqref{diss_rate}.

Assume now $\{u^\nu_0\}_\nu\subset L^2(\T^2)$ to be strongly compact. Let $\eps>0$. By \eqref{diss short time equicont} we find $\delta\in(0,1)$ such that 
\begin{equation}
    \label{usual bound short times}
    \limsup_{\nu\rightarrow 0}\nu \int_0^\delta\|\nabla u^\nu(t)\|^2_{L^2_x}\,dt<\eps.
\end{equation}
Note that, if $\{T_\nu\}_\nu$ is a sequence of positive real numbers satisfying \eqref{times assump}, we can find $\nu_1>0$ such that \eqref{restriction on T and nu} holds for all $0<\nu<\nu_1$ and with $T_\nu$ in place of $T$. Thus, by \eqref{diss_rate} we get
\begin{equation}
    \label{bound long times}
    \nu\int_\delta^{T_\nu}\|\nabla u^\nu(t)\|^2_{L^2_x}\,dt\leq C \sqrt{\frac{T_\nu}{\delta}\tilde G_\beta(\sqrt\nu)} \qquad \forall 0<\nu<\min(\nu_0,\nu_1).
\end{equation}
By putting together \eqref{usual bound short times}, \eqref{bound long times} and the assumption \eqref{times assump} we conclude
$$
\limsup_{\nu\rightarrow 0}\nu \int_0^{T_\nu}\|\nabla u^\nu(t)\|^2_{L^2_x}\,dt<\eps+ \limsup_{\nu\rightarrow 0}  \nu\int_\delta^{T_\nu}\|\nabla u^\nu(t)\|^2_{L^2_x}\,dt=\eps,
$$
from which \eqref{diss_vanish_long_times} follows by the arbitrariness of $\eps>0$.
 \end{proof}

\begin{remark}\label{R:time scale 1 over nu}
Let us show that in a time scale $T_\nu\gtrsim \nu^{-1}$ the dissipation is always non-trivial. Assume $\int_{\T^2}u^\nu_0=0$. Since the zero average condition is preserved along the evolution, by the energy balance \eqref{NS en bal} and the Poincar\'e inequality 
$$
\frac{d}{dt}\|u^\nu(t)\|^2_{L^2_x}=-2\nu \|\nabla u^\nu(t)\|^2_{L^2_x}\leq -\nu C \|u^\nu(t)\|^2_{L^2_x},
$$
from which $\|u^\nu(t)\|^2_{L^2_x}\leq \|u^\nu_0\|^2_{L^2_x} e^{-\nu Ct}$ by the Gr\"onwall lemma. Thus, if $T_\nu\geq (\nu C)^{-1}$ we deduce 
$$
2\nu\int_0^{T_\nu}\|\nabla u^\nu(t)\|^2_{L^2_x}\,dt=\|u^\nu_0\|^2_{L^2_x}-\|u^\nu(T_\nu)\|^2_{L^2_x}\geq \frac12 \|u^\nu_0\|^2_{L^2_x}.
$$
This proves that, as soon as the initial data do not converge strongly to zero, it must hold
$$
T_\nu\geq\frac{1}{\nu C}\qquad \Longrightarrow \qquad \liminf_{\nu\rightarrow 0}\nu\int_0^{T_\nu}\|\nabla u^\nu(t)\|^2_{L^2_x}\,dt>0.
$$
\end{remark}

 By Proposition \ref{P:dissipation short times quantitative} it follows that any quantitative compactness of the initial data allows to get a  rate for the dissipation up to the initial time, thus extending \eqref{diss_rate} all the way to $\delta=0$. While several choices are possible, we give a particular example in the next proposition. We emphasize that here the final time $T$ is fixed a priori.
\begin{proposition}
    \label{P: diss rates up to zero}
 Let $\{u^\nu_0\}_\nu\subset L^2(\T^2)$ be a bounded sequence of divergence-free vector fields such that 
 \begin{equation}
     \label{averaged Besov uniform}
     \sup_{\nu>0}\left(\fint_{B_\ell(0)} \|u^\nu_0(\cdot+y)-u^\nu_0(\cdot)\|^2_{L^2_x}\,dy\right)^\frac12\leq C\ell^\sigma\qquad \forall\ell>0,
 \end{equation}
 for some $C,\sigma>0$. Assume that $\{\omega^\nu_0\}_\nu\subset \mathcal{M}(\T^2)$ is bounded and it admits a decomposition $\omega^\nu_0=f^\nu_0+\mu^\nu_0$  for some  $\mu^\nu_0\geq 0$ and $\{f^\nu_0\}_{\nu}\subset L^1(\T^2)$ such that 
\begin{equation}
    \sup_{\nu>0} \int_{\T^2} \beta \left(|f_0^\nu (x)|\right)\,dx<\infty,
\end{equation}
for some $\beta\in \mathcal K$, the set defined in \eqref{def superlinear beta}. There exist a constant $C>0$ and a value $\nu_0>0$ such that 
$$
\nu\int_0^T \|\nabla u^\nu(t)\|^2_{L^2_x}\,dt\leq C\left(G_\beta(\sqrt\nu)+\frac{1}{\sqrt{\log\frac{1}{\nu}}}\right)^{\frac{\sigma}{4(1+\sigma)}}\qquad \forall 0<\nu<\nu_0. 
$$
\end{proposition}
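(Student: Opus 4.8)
The strategy is to split $\int_0^T=\int_0^\delta+\int_\delta^T$ at a parameter $\delta\in(0,1)$ depending on $\nu$, to control the short-time part by the quantitative equicontinuity of \cref{P:dissipation short times quantitative} fed with the Besov hypothesis \eqref{averaged Besov uniform}, to control the positive-time part by the already established rate \eqref{diss_rate}, and finally to optimize in $\delta$.

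For the short-time part, first observe that \eqref{averaged Besov uniform} quantifies the modulus of compactness $\Phi$ from \eqref{modulus of comapctness}: writing $u^\nu_{0,\eps}-u^\nu_0=\int(u^\nu_0(\cdot-y)-u^\nu_0(\cdot))\rho_\eps(y)\,dy$ and using Jensen's inequality together with $\supp\rho_\eps\subset B_\eps(0)$ and $\|\rho_\eps\|_{L^\infty}\leq C\eps^{-2}$, one gets $\|u^\nu_{0,\eps}-u^\nu_0\|_{L^2_x}^2\leq C\fint_{B_\eps(0)}\|u^\nu_0(\cdot+y)-u^\nu_0(\cdot)\|_{L^2_x}^2\,dy\leq C\eps^{2\sigma}$, i.e. $\Phi(\eps)\leq C\eps^{\sigma}$ (this is \eqref{mollfied bounds} applied at $t=0$). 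Inserting this into \cref{P:dissipation short times quantitative} yields
\[
\nu\int_0^\delta\|\nabla u^\nu(t)\|^2_{L^2_x}\,dt\leq C\sqrt{\eps^{\sigma}+\frac{\delta}{\eps^2}}\qquad\forall\,\eps,\delta\in(0,1),
\]
and choosing $\eps=\delta^{1/(\sigma+2)}\in(0,1)$ gives $\nu\int_0^\delta\|\nabla u^\nu(t)\|^2_{L^2_x}\,dt\leq C\,\delta^{\sigma/(2(\sigma+2))}$.

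For the positive-time part, the hypotheses are precisely those of \cref{T: rates intro} with $M_1,M_2<\infty$, so \eqref{diss_rate} gives, with $\eta_\nu:=G_\beta(\sqrt\nu)+(\log\tfrac1\nu)^{-1/2}$,
\[
\nu\int_\delta^T\|\nabla u^\nu(t)\|^2_{L^2_x}\,dt\leq C\sqrt{\frac{T}{\delta}\,\eta_\nu}\qquad\text{for all }0<\nu<\nu_0,
\]
provided \eqref{restriction on T and nu} holds; since $T$ is fixed a priori and $\eta_\nu\to0$ as $\nu\to0$ (recall $G_\beta(0)=0$), this is automatic after possibly decreasing $\nu_0$. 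Adding the two contributions,
\[
\nu\int_0^T\|\nabla u^\nu(t)\|^2_{L^2_x}\,dt\leq C\Big(\delta^{\frac{\sigma}{2(\sigma+2)}}+\sqrt{\tfrac{T}{\delta}\,\eta_\nu}\Big),
\]
and balancing the two terms forces $\delta=\eta_\nu^{(\sigma+2)/(2(\sigma+1))}$, which lies in $(0,1)$ for $\nu$ small. A short computation of exponents gives $\delta^{\sigma/(2(\sigma+2))}=\eta_\nu^{\sigma/(4(\sigma+1))}$ and $\sqrt{T\eta_\nu/\delta}\leq C\eta_\nu^{\sigma/(4(\sigma+1))}$, whence $\nu\int_0^T\|\nabla u^\nu(t)\|^2_{L^2_x}\,dt\leq C\eta_\nu^{\sigma/(4(1+\sigma))}$, which is the claimed estimate.

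There is no serious obstacle here; the proof is essentially a two-scale optimization built on \cref{P:dissipation short times quantitative} and \eqref{diss_rate}, and the content is that a polynomial modulus of continuity of the initial data propagates into the dissipation bound all the way down to $\delta=0$, at the price of the worse exponent $\sigma/(4(1+\sigma))$. The only point requiring (minor) care is bookkeeping: one must check that the chosen $\delta$, the auxiliary $\eps$, and the smallness condition \eqref{restriction on T and nu} are simultaneously admissible, which holds for all $\nu$ below a threshold depending only on $T$, $\sigma$ and $\beta$, again because $\eta_\nu\to0$.
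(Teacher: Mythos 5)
Your proof is correct and is essentially the paper's own argument: both bound $\Phi(\eps)\leq C\eps^\sigma$ via Jensen's inequality from \eqref{averaged Besov uniform}, control $[0,\delta]$ with \cref{P:dissipation short times quantitative}, control $[\delta,T]$ with \eqref{diss_rate}, and optimize, arriving at the same exponent $\tfrac{\sigma}{4(1+\sigma)}$. The only (immaterial) difference is bookkeeping: you fix $\eps=\delta^{1/(\sigma+2)}$ and then optimize in $\delta$, while the paper sets $\delta=\eps^{2+\sigma}$ and optimizes in $\eps$ — the same computation after a change of variables.
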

\begin{proof}
      Let $\Phi(\eps)$ be as in \eqref{modulus of comapctness}. A direct computation shows that \eqref{averaged Besov uniform} implies $\Phi(\eps)\leq C\eps^\sigma$. Let $\nu_0$ be small enough, depending on $\beta$ and $T$, such that \eqref{diss_rate} holds. By Proposition \ref{P:dissipation short times quantitative} and Theorem \ref{T: rates intro} we deduce 
      $$
      \nu\int_0^T \|\nabla u^\nu(t)\|^2_{L^2_x}\,dt\leq C\left(\sqrt{\eps^\sigma+ \frac{\delta}{\eps^2}}+\sqrt{\frac{\tilde G_\beta(\sqrt\nu)}{\delta}}\right) \qquad \forall 0<\nu<\nu_0,\, \forall \eps,\delta\in (0,1),
      $$
      with $\tilde G_\beta(s):=G_\beta(s)+\left(\log\frac{1}{s}\right)^{-\sfrac{1}{2}}$. Choosing $\delta=\eps^{2+\sigma}$ yields to 
      $$
      \nu\int_0^T \|\nabla u^\nu(t)\|^2_{L^2_x}\,dt\leq C\left(\eps^{\frac{\sigma}{2}}+\frac{\sqrt{\tilde G_\beta(\sqrt\nu)}}{\eps^{1+\frac{\sigma}{2}}}\right) \qquad \forall 0<\nu<\nu_0,\, \forall \eps\in (0,1),
      $$
      which is optimized by $\eps^{1+\sigma}:=\sqrt{\tilde G_\beta(\sqrt\nu)}$. This concludes the proof.
\end{proof}
\begin{remark}\label{R:besov equivalence}
     The assumption \eqref{averaged Besov uniform} is equivalent to ask that $\{u^\nu_0\}_\nu\subset B^\sigma_{2,\infty}(\T^2)$ is bounded\footnote{Recall that $f\in B^\sigma_{2,\infty}$ if $f\in L^2$ and $\|f(\cdot+y)-f(\cdot)\|_{L^2}\leq C |y|^\sigma$ for all $y$.}. Indeed,  denoting by $u^\nu_{0,\ell}=u^\nu_0*\rho_\ell$ its mollification, \eqref{averaged Besov uniform} implies 
     $$
    \sup_{\nu>0} \|u^\nu_{0,\ell}-u^\nu_{0}\|_{L^2_x}\leq C\ell^\sigma \qquad \text{and}\qquad \sup_{\nu>0} \|\nabla u^\nu_{0,\ell}\|_{L^2_x}\leq C\ell^{\sigma-1},
     $$
     from which the, uniform in viscosity, Besov regularity immediately follows, that is,
     \[
     \eqref{averaged Besov uniform} \qquad \implies\qquad  \sup_{\nu>0}\rVert u_0^\nu(\cdot + y)-u_0^\nu(\cdot)\rVert_{L_x^2}\le C|y|^{\sigma}.
     \] The opposite direction is trivial.
     Further conditions in terms of the vorticity decay on balls can be found in \cite{lant23}*{Section 2}.
\end{remark}

\section{The stationary case}\label{S:stationary}
Here we address the case of steady fluids. In this setting, the energy identity for \eqref{SNS} implies
\begin{equation}
    \label{en identity steady}
    \nu \int_{\T^2}|\nabla u^\nu|^2=\int_{\T^2}u^\nu \cdot f^\nu\leq \|u^\nu\|_{L^2} \|f^\nu\|_{L^2} .
\end{equation}
By taking the $\curl$ of the first equation in \eqref{SNS}, the vorticity $\omega^\nu$ solves
\begin{equation}
    \label{vort eq steady}
   u^\nu\cdot \nabla \omega^\nu  =\nu \Delta \omega^\nu +\curl f^\nu.
\end{equation}
In particular
\begin{equation}\label{enstrophy balance steady}
    \nu \int_{\T^2}|\nabla \omega^\nu|^2=\int_{\T^2} \omega^\nu \curl f^\nu=- \int_{\T^2} f^\nu\cdot \nabla^\perp \omega^\nu\leq \norm{f^\nu}_{L^2}\norm{\nabla \omega^\nu}_{L^2},
\end{equation}
and consequently 
\begin{equation}
    \label{higer order estimate steady}
    \nu \norm{\nabla \omega^\nu}_{L^2}\leq \norm{f^\nu}_{L^2}.
\end{equation}
\begin{proof}[Proof of Theorem \ref{T: measure vort steady}]
The structure of the proof follows closely that of Theorem \ref{T: general leray} and Theorem \ref{T: measure vort}, the main difference being how the external force is handled. We break the proof down into steps. 

 \underline{\textsc{Proof of $D\ll\Lambda$}}. Let $\varphi \in C^\infty(\T^2)$. Integrating by parts we split
 \begin{align}
        \nu\int_{\T^2}  |\nabla u^\nu|^2 \varphi
        &= -\underbrace{\nu  \int_{\T^2} (u^\nu-u) \cdot \Delta u^\nu \varphi}_{I_\nu}  -\underbrace{\nu  \int_{\T^2} u \cdot \Delta u^\nu \varphi}_{II_\nu} + \underbrace{\nu  \int_{\T^2} \frac{|u^\nu|^2}{2} \Delta \varphi}_{III_\nu}.\label{eq_splitting_new_proof_lambda}
    \end{align}
Since $\{u^\nu\}_\nu\subset L^2(\T^2)$ is bounded, $III_\nu\rightarrow 0$.  We now handle $II_\nu$. For any $\psi\in C^\infty(\T^2)$ we have 
$$
\nu\int_{\T^2}\psi \cdot \Delta u^\nu = \nu\int_{\T^2}u^\nu \cdot \Delta \psi\rightarrow 0.
$$
This, together with $\{\nu\Delta u^\nu\}_\nu\subset L^2(\T^2)$ bounded thanks to \eqref{higer order estimate steady}, yields to $\nu\Delta u^\nu\rightharpoonup 0$ in $L^2(\T^2)$ and consequently $II_\nu\rightarrow 0$.

We are only left to handle the term $I_\nu$, that is the only non-vanishing one. By \eqref{higer order estimate steady} it can be estimated as
$$
|I_\nu|\leq C \|(u^\nu-u)\varphi\|_{L^2}.
$$
Summing up, since $\nu|\nabla u^\nu|^2\overset{*}{\rightharpoonup} D$ and $|u^\nu-u|^2\overset{*}{\rightharpoonup}\Lambda$, we have proved
\begin{equation}\label{D and Lambda quantitative steady}
\int_{\T^2} \varphi \, dD\leq C \left(\int_{\T^2} \varphi^2 \, d\Lambda\right)^\frac12,
\end{equation}
from which $D\ll \Lambda$ follows by the arbitrariness of $\varphi$.

 \underline{\textsc{Proof of $F=0 \, \Longrightarrow \, D=0$}}. Denote by $\omega_\alpha:=\omega^\nu*\rho_\alpha$ the mollification of $\omega^\nu$. By \eqref{moll est 2}, \eqref{moll est 3} and \eqref{higer order estimate steady} we bound 
 \begin{align}
 \nu\int_{\T^2} |\omega^\nu|^2\leq 2\nu \left( \int_{\T^2}|\omega^\nu_\alpha|^2  +\int_{\T^2}|\omega^\nu-\omega^\nu_\alpha|^2 \right)\leq C \nu \left( \frac{1}{\alpha^2} +\alpha^2 \int_{\T^2} |\nabla \omega^\nu|^2\right).\label{split diss for forcing global steady}
 \end{align}
Denote $f^\nu_\eps:=f^\nu*\rho_\eps$ and define $\Phi(\eps):=\sup_{\nu>0} \|f^\nu_\eps-f^\nu\|_{L^2}$. Note that $\Phi(\eps)\rightarrow 0$ since $F=0$. By manipulating \eqref{enstrophy balance steady} we get 
$$
\nu\int_{\T^2} |\nabla \omega^\nu|^2=\int_{\T^2} \omega^\nu \curl f^\nu =-\int_{\T^2} f^\nu\cdot \nabla^\perp \omega^\nu  \leq \Phi(\eps)\|\nabla \omega^\nu\|_{L^2}+ \int_{\T^2} \omega^\nu \curl f^\nu_\eps.
$$
Thus, by \eqref{higer order estimate steady}, \eqref{en identity steady} and \eqref{moll est 2} we deduce
$$
\nu\int_{\T^2} |\nabla \omega^\nu|^2\leq C\left( \frac{\Phi(\eps)}{\nu}+\|\omega^\nu\|_{L^2}\frac{\|f^\nu\|_{L^2}}{\eps}\right)\leq C\left( \frac{\Phi(\eps)}{\nu}+\frac{1}{\eps \sqrt{\nu}}\right).
$$
By plugging this last estimate into \eqref{split diss for forcing global steady} we achieve
$$
\nu\int_{\T^2} |\omega^\nu|^2\leq C \left( \frac{\nu}{\alpha^2} +\frac{\alpha^2}{\nu} \left(\Phi(\eps) +\frac{\sqrt\nu}{\eps}\right)\right)\qquad \forall\nu,\alpha,\eps>0.
$$
Consequently, we choose\footnote{A more optimal choice can be made of course.} $\eps:=\nu^{\sfrac{1}{4}}$ and then $\alpha^2:=\nu \left(\Phi(\nu^{\sfrac{1}{4}}) + \nu^{\sfrac{1}{4}}\right)^{-\sfrac{1}{2}}$ to get 
$$
\nu\int_{\T^2} |\omega^\nu|^2\leq C\sqrt{\Phi(\nu^{\sfrac{1}{4}}) + \nu^{\sfrac{1}{4}}},
$$
from which we conclude $D=0$ by letting $\nu\rightarrow 0$.

  \underline{\textsc{Proof of $D=D\llcorner  (\mathscr L \cap \mathscr O$)}}. Since we already proved $D\ll \Lambda$, it is enough to show $D=D\llcorner  \mathscr O$, or equivalently $D(\mathscr O^c)=0$. By possibly passing to a subsequence, we can assume $\nu|\omega^\nu|^2\overset{*}{\rightharpoonup} \tilde D$ in $\mathcal{M}(\T^2)$.   Let $\varphi\in C^\infty(\T^2)$. Denoting by $\omega^\nu_\alpha:=\omega^\nu*\rho_\alpha$ the mollification of $\omega^\nu$, we split 
  \begin{equation}
      \label{tilde D splitting steady}
      \nu\int_{\T^2}|\omega^\nu|^2\varphi =\nu\int_{\T^2}\omega^\nu \omega^\nu_\alpha \varphi +\nu\int_{\T^2}\omega^\nu(\omega^\nu -\omega^\nu_\alpha)\varphi.
  \end{equation}
  By \eqref{moll est 3}, \eqref{en identity steady} and \eqref{higer order estimate steady} we get 
\begin{equation}
    \label{second small concentration steady}
    \nu \left| \int_{\T^2}\omega^\nu(\omega^\nu -\omega^\nu_\alpha)\varphi\right|\leq \nu \alpha\|\omega^\nu\|_{L^2} \|\nabla \omega^\nu\|_{L^2}\|\varphi\|_{L^\infty}\leq C \|\varphi\|_{L^\infty} \frac{\alpha}{\sqrt{\nu}}.
\end{equation}
Moreover
\begin{equation}
    \label{first product measures steady}
    \nu \left| \int_{\T^2}\omega^\nu \omega^\nu_\alpha \varphi\right|\leq \frac{\nu}{\alpha^2} \int_{\T^2} \int |\omega^\nu (x)||\omega^\nu (y)| \rho\left(\frac{x-y}{\alpha}\right)\varphi(x)\, dydx. 
\end{equation}
Let $\eps,r>0$. By choosing $\alpha=\sqrt{\eps \nu}$ and plugging \eqref{second small concentration steady} and \eqref{first product measures steady} into \eqref{tilde D splitting steady} we achieve
 \begin{equation}
      \nu\int_{\T^2}|\omega^\nu|^2\varphi \leq \frac{1}{\eps}\int_{\T^2} \int |\omega^\nu (x)||\omega^\nu (y)| \rho\left(\frac{x-y}{r}\right)\varphi(x)\, dydx +C \sqrt{\eps}  \|\varphi\|_{L^\infty},
  \end{equation}
  as soon as $r\geq \sqrt{\eps\nu}$. Thus, since $\nu|\omega^\nu|^2\overset{*}{\rightharpoonup} \tilde D$ in $\mathcal{M}(\T^2)$ and $|\omega^\nu|\otimes |\omega^\nu|\overset{*}{\rightharpoonup} \Omega\otimes \Omega$ in $\mathcal{M}(\T^2\times \T^2)$, by letting $\nu\rightarrow 0$ we obtain 
  \begin{align}
  \int_{\T^2} \varphi \, d\tilde D&\leq \frac{1}{\eps}\int_{\T^2} \int  \rho\left(\frac{x-y}{r}\right)\varphi(x)\, d(\Omega\otimes \Omega) +C \sqrt{\eps} \|\varphi\|_{L^\infty}\\
 &\leq \frac{C}{\eps}\int_{\T^2}  \varphi(x)\Omega (B_r(x)) \, d\Omega (x) +C \sqrt{\eps} \|\varphi\|_{L^\infty}\qquad \forall r>0.
\end{align}
Clearly $\Omega(B_r(x))\rightarrow \Omega (\{x\} )$ for all $x\in \T^2$. Thus, by letting $r\rightarrow 0$ we achieve 
\begin{equation}\label{nice bound steady}
\int_{\T^2} \varphi \, d\tilde D \leq \frac{C}{\eps}\int_{\T^2} \int  \varphi(x)\Omega (\{x\} ) \, d\Omega (x) +C \sqrt{\eps} \|\varphi\|_{L^\infty}.
\end{equation}
The measure $\Omega \left(\{x\} \right) d\Omega$ is finite and purely atomic, concentrated on $\mathscr O$. Then \eqref{nice bound steady} becomes 
\begin{equation}\label{nice bound steady atomic}
\int_{\T^2} \varphi \, d\tilde D \leq \frac{C}{\eps}\sum_{x\in \mathscr O}  \varphi(x)\Omega^2 (\{x\} )  +C \sqrt{\eps} \|\varphi\|_{L^\infty}.
\end{equation}
Since $D\leq C \tilde D$ as measures (see Proposition \ref{P: D and tilde D are equiv}), the arbitrariness of $\varphi$ then implies
\begin{equation}\label{inequality measures final steady}
D(A)\leq C \tilde D(A)\leq C\left( \frac{1}{\eps} \sum_{x\in \mathscr O\cap A}  \Omega^2 (\{x\} )  + \sqrt{\eps}\right)\qquad \forall A\subset \T^2, \, A \text{ Borel}.
\end{equation}
Thus $D(\mathscr O^c)\leq C \sqrt{\eps}$,
which yields to $D (\mathscr O^c)=0$ since $\eps>0$ was arbitrary.
\end{proof}

\begin{remark}
    In fact, from \eqref{D and Lambda quantitative steady} we get $D(A)\leq C\Lambda^{\sfrac{1}{2}}(A)$ for any Borel set $A\subset \T^2$, i.e. the absolute continuity is quantitative.  Also, if we enumerate $\mathscr O=\{x_i\}_i$, by setting $D_i:=D(\{x_i\})$, and $\Omega_i:=\Omega(\{x_i\})$, an optimization in $\eps$ of the inequality \eqref{inequality measures final steady} yields to $D_i\leq C \Omega_i^{\sfrac{2}{3}}$ for all $i$.
\end{remark}

 \begin{remark}
     Without the vorticity being a measure, there is no hope to constraint the dissipation to be purely atomic, nor lower dimensional. The vector field $u^\nu(x_1,x_2):=\sin \left( \frac{x_2}{\sqrt{\nu}}\right) e_1$ solves \eqref{SNS} with $f^\nu=u^\nu$ and $p^\nu=0$. Clearly $\{u^\nu\}_\nu\subset L^\infty (\T^2)$ is bounded, it converges weakly to $0$ in $L^2(\T^2)$, but not strongly. Moreover $\nu|\nabla u^\nu|^2=\left|\cos \left(\frac{x_2}{\sqrt \nu}\right)\right|^2$. In particular, all the measures $\Lambda, F$ and $D$ are non-trivial and absolutely continuous with respect to the Lebesgue measure.
 \end{remark}

\begin{remark}
    \label{R:D ll F fails}
The proof of $F=0\,\Longrightarrow \, D=0$  does not seem to be improvable to $D\ll F$. The main obstruction is the appearance of problematic (cubic) terms when trying to localize \eqref{split diss for forcing global steady} on a test function $\varphi$. These terms do not even seem to stay bounded as $\nu\rightarrow 0$. Perhaps, since by Theorem \ref{T: measure vort steady} we have $D\ll \Lambda$, an attempt would be to deduce $D\ll F$ from $\Lambda\ll F$. However, the latter fails in general. Indeed, consider the stream function $\psi^\nu(x_1,x_2):=\nu^{-\kappa}\sin (\nu^\kappa x_1) \cos (\nu^\kappa x_2)$, for some $\kappa\in \left(-\frac12, 0\right)$. Then $\omega^\nu=\Delta \psi^\nu =-2\nu^{2\kappa}\psi^\nu$. In particular, if $u^\nu=\nabla^\perp \psi^\nu$, it holds $u^\nu\cdot \nabla \omega^\nu=0$. We have thus obtained a solution to \eqref{SNS} with $f^\nu:=-\nu \Delta u^\nu$. Moreover, $u^\nu\rightharpoonup 0$ in $L^2(\T^2)$, $|u^\nu|^2\overset{*}{\rightharpoonup} c \mathcal{L}^2$ in $\mathcal{M}(\T^2)$ for some $c>0$, while $\|f^\nu\|_{L^2}\leq C\nu^{1+2\kappa}\rightarrow 0$ since $\kappa>-\frac12$.
\end{remark}

\begin{remark}
    As for the time dependent case, the compactness of $\{f^\nu\}_\nu$ at  scales $\ell_\nu\sim \sqrt{\nu}$ suffices to rule out dissipation. Indeed, a closer inspection at the proof given above shows that $D=0$ as soon as
    $$
    \fint_{B_{\ell_\nu}(0)} \|f^\nu(\cdot + y )-f^\nu(\cdot)\|^2_{L^2}\,dy\rightarrow 0
    $$
    for a sequence of positive numbers $\{\ell_\nu\}_\nu$ such that $\limsup_{\nu\rightarrow 0}\frac{\sqrt{\nu}}{\ell_\nu}=0$. A similar consideration applies to $\{u^\nu\}_\nu$.
\end{remark}

\begin{remark}
    \label{R: sharp stationary}
    Any radial vorticity profile $\omega\in C^\infty_c(\R^2)$ defines a stationary solution to the incompressible Euler equations. If $\omega$ has zero average and is compactly supported, then $u$ is also compactly supported in $\mathbb{R}^2$ (see for instance \cite{GSPS21}*{Lemma 4.3}). Being compactly supported, we can also think of it as a solution on $\T^2$. Thus, $u^\nu(x):= \frac{1}{\sqrt\nu} u\left(\frac{x}{\sqrt{\nu}} \right)$, or analogously  $\omega^\nu(x):= \frac{1}{\nu}\omega\left(\frac{x}{\sqrt{\nu}} \right)$, defines a sequence of smooth compactly supported solutions to \eqref{SNS} with $f^\nu=-\nu \Delta u^\nu$. Moreover, if $\omega$ is non-trivial, all the measures $D,\Lambda, F$ and $\Omega$ have an atom at the origin. In particular, no concentration-cancellation can hold.  
\end{remark}

\section{Kinematic examples}\label{S:kin examp}
The next proposition serves to highlight that atoms may independently appear in the defect measure $\Lambda$ and in the vorticity measure $\Omega$. In view of Corollary \ref{C: pos vort and atomic}, this leaves open the possibility to rule out dissipation if the two measures concentrate on disjoint sets.

\begin{proposition}
    \label{P: atoms in lambda vs omega}
    Let $B_1\subset\R^2$ be the open disk of radius $1$ centered at the origin. The following hold.
    \begin{itemize}
        \item[(i)]There exists a sequence $\{u_n\}_n\subset C^\infty_c(B_1)$ of incompressible vector fields such that, denoting by $\omega_n:=\curl u_n$, it holds
 \begin{equation}\label{atoms in lambda and no in omega}
    \|u_n\|_{L^2}\rightarrow 0\qquad \text{ and } \qquad |\omega_n|\overset{*}{\rightharpoonup} \Omega \text{ with } \, \Omega(\{0\})>0.
 \end{equation}
 \item[(ii)] There exists a sequence $\{u_n\}_n\subset C^\infty_c(B_1)$ of incompressible vector fields such that $\|u_n\|_{L^2}=1$ for all $n$ and, denoting by $\omega_n:=\curl u_n$, it holds
 \begin{equation}
     \|u_n\|_{L^1}+ \|\omega_n\|_{L^1}\rightarrow 0 \qquad  \text{ and }\qquad  |u_n|^2\overset{*}{\rightharpoonup} \Lambda  \text{ with } \Lambda(\{0\})>0.
 \end{equation}
    \end{itemize}
    Being compactly supported, the above examples work on $\T^2$ as well.
\end{proposition}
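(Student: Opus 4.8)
Both sequences will be produced via a stream function, $u_n=\nabla^\perp\psi_n$ with $\psi_n\in C^\infty_c(B_1)$, so that $u_n$ is automatically divergence-free and compactly supported, $\|u_n\|_{L^p}=\|\nabla\psi_n\|_{L^p}$, and $\omega_n=\curl u_n=\Delta\psi_n$. The task then becomes to control the ratio $\|\Delta\psi_n\|_{L^1}/\|\nabla\psi_n\|_{L^2}$: for $(i)$ we need it bounded below (in fact $\|\nabla\psi_n\|_{L^2}\to0$ with $\|\Delta\psi_n\|_{L^1}$ bounded below and $\supp\psi_n\to\{0\}$), while for $(ii)$ we need it to vanish (in fact $\|\Delta\psi_n\|_{L^1}\to0$ with $\|\nabla\psi_n\|_{L^2}=1$ and $|\nabla\psi_n|^2$ concentrating at the origin). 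This is exactly the failure of the $L^1$ endpoint of the Calderón--Zygmund inequality exploited throughout the paper. The obstruction to a naive construction is that for a single rescaled bump $\psi_n(x)=a_n\varphi(x/r_n)$ one has, in two dimensions, $\|\nabla\psi_n\|_{L^2}^2=a_n^2\|\nabla\varphi\|_{L^2}^2$ and $\|\Delta\psi_n\|_{L^1}=a_n\|\Delta\varphi\|_{L^1}$, so the above ratio is a fixed constant; one must instead engineer the correct \emph{shape}.

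For $(i)$ I would superimpose a fast oscillation on a shrinking cutoff. Fix a radial $\chi\in C^\infty_c(B_1)$ with $\chi\equiv1$ on $B_{1/2}$, let $r_n\downarrow0$, set $N_n:=\lceil r_n^{-2}\rceil$ and
$$
\psi_n(x):=\frac{1}{N_n}\,\chi\!\left(\frac{x}{r_n}\right)\sin(N_n x_1).
$$
Expanding the gradient gives $\|\nabla\psi_n\|_{L^2}\le N_n^{-1}\|\nabla\chi\|_{L^2}+r_n\|\chi\|_{L^2}\to0$, hence $\|u_n\|_{L^2}\to0$. Expanding $\Delta\psi_n$ by the Leibniz rule and using $\Delta(\sin N_nx_1)=-N_n^2\sin N_nx_1$, the dominant term is $-N_n\,\chi(x/r_n)\sin(N_nx_1)$, whose $L^1$ norm stays between two positive constants because $N_nr_n^2\to1$ while $N_nr_n\to\infty$ forces $\fint_{B_{r_n/2}}|\sin(N_nx_1)|$ to be of order one; the two remaining terms are $O(r_n+N_n^{-1})$ in $L^1$. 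Thus $\|\omega_n\|_{L^1}$ is bounded above and below, and since $\supp\omega_n\subset B_{r_n}(0)$, after passing to a subsequence $|\omega_n|\overset{*}{\rightharpoonup}\Omega=m\delta_0$ with $m=\lim_n\|\omega_n\|_{L^1}>0$, which is $(i)$.

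For $(ii)$ I would use the logarithmic ``capacity'' profile. Fix $R\in(0,1)$, let $r_n\downarrow0$, and let $\psi_n$ be a radial mollification of the function equal to $A_n$ on $B_{r_n}$, to $A_n\,\log(R/|x|)/\log(R/r_n)$ on the annulus $r_n<|x|<R$, and to $0$ outside $B_R$, with $A_n$ chosen so that $\|\nabla\psi_n\|_{L^2}=1$; since the middle profile is harmonic, $\|\nabla\psi_n\|_{L^2}^2\sim 2\pi A_n^2/\log(R/r_n)$, forcing $A_n\sim(\log(R/r_n)/2\pi)^{1/2}$. Then $\omega_n=\Delta\psi_n$ is supported in thin layers around the circles $\{|x|=r_n\}$ and $\{|x|=R\}$, and estimating these two transition annuli yields $\|\omega_n\|_{L^1}\lesssim A_n/\log(R/r_n)\sim(\log(R/r_n))^{-1/2}\to0$ (here $\int\omega_n=0$ automatically because $\psi_n$ is compactly supported, so no ``return current'' has to be added by hand, and sign changes of $\omega_n$ only help the $L^1$ bound). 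Moreover $\int_{\{|x|>\delta\}}|u_n|^2\sim\log(R/\delta)/\log(R/r_n)\to0$ for every fixed $\delta>0$, so $\int_{\{|x|\le\delta\}}|u_n|^2\to1$ and therefore $|u_n|^2\overset{*}{\rightharpoonup}\delta_0=:\Lambda$ with $\Lambda(\{0\})=1$; the same concentration, together with $\|u_n\|_{L^2}=1$ and Cauchy--Schwarz on $\{|x|\le\delta\}$ and on $\{|x|>\delta\}$, gives $\|u_n\|_{L^1}\to0$. Since every $u_n$ is supported in a ball, both examples descend to $\T^2$ after an obvious dilation.

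The step I expect to require the most care is the $L^1$ control of $\omega_n$ in $(ii)$ after mollifying the two kinks of $\psi_n$: the regularization must be done so that $\|\Delta\psi_n\|_{L^1}$ is not inflated beyond the ``jump of $\partial_\rho\psi_n$ times circumference'' heuristic --- for instance by smoothing $\psi_n$ radially monotonically, or simply by bounding $\|\psi_n''\|_{L^1}+\|\rho^{-1}\psi_n'\|_{L^1}$ over the two transition annuli, both of which give the claimed rate.
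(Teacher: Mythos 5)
Your constructions are correct, but they take a more explicit route than the paper, which argues softly. For $(i)$ the paper picks \emph{any} compactly supported divergence-free $v\in L^2(B_1)$ with $\curl v\notin \mathcal M(B_1)$, mollifies so that $\|\curl \tilde v_n\|_{L^1}\to\infty$ while $\|\tilde v_n\|_{L^2}$ stays bounded, normalizes by $\|\curl \tilde v_n\|_{L^1}$, and then concentrates the support by the rescaling $u_n(x)=\eps_n^{-1}v_n(x/\eps_n)$; for $(ii)$ it solves $\Delta\tilde\psi=\tilde\mu$ for a measure with an atom, invokes Delort's Lemma 1.2.5 to get $\nabla\psi\notin L^2$ while $\nabla\psi\in L^1$ and $\Delta\psi\in\mathcal M$, mollifies, normalizes in $L^2$, and rescales. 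Your oscillatory profile $N_n^{-1}\chi(x/r_n)\sin(N_nx_1)$ and your logarithmic capacity profile are explicit instances of the same endpoint failure of Calder\'on--Zygmund (indeed your $(ii)$ is exactly the paper's construction with $\tilde\mu=\delta_0$ made concrete), and your computations check out: the dominant term $-N_n\chi(x/r_n)\sin(N_nx_1)$ of $\Delta\psi_n$ has $L^1$ norm converging to $\tfrac{2}{\pi}\|\chi\|_{L^1}$ by equidistribution since $N_nr_n\to\infty$ and $N_nr_n^2\to1$, and the unmollified $\Delta\psi_n$ in $(ii)$ is a signed measure on the two circles of total variation $4\pi A_n/\log(R/r_n)\to0$, which standard convolution does not increase, so the regularization step you worried about is harmless. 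What your version buys is explicit rates (e.g.\ $\|\omega_n\|_{L^1}\sim(\log(1/r_n))^{-1/2}$ in $(ii)$) and independence from the cited potential-theoretic lemma; what the paper's version buys is brevity and robustness --- it transparently generalizes to any dimension and any first-order relation between $u$ and $\omega$, as the authors remark after \cref{P: no weak lions}. Two routine points you should still spell out: in $(i)$ either pass to a subsequence to define $\Omega$ or use the equidistribution limit to identify its mass, and in $(ii)$ renormalize after mollifying so that $\|u_n\|_{L^2}=1$ holds exactly (the normalizing factor tends to $1$, so no asymptotics change).
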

\begin{proof} We prove the two separately.

\underline{\textsc{Proof of $(i)$}}.  We claim that $\exists \{v_n\}_n\subset C^\infty_c(B_1)$ with $\div v_n=0$ for all $n$, such that 
    \begin{equation}\label{claim one}
     \|\curl v_n\|_{L^1} =1 \quad\forall n \qquad \text{ and } \qquad \|v_n\|_{L^2}\rightarrow 0.
     \end{equation}
    Assuming the validity of the claim, the conclusion follows by suitably rescaling the sequence. Indeed, for any $\eps_n\in (0,1)$ such that $\eps_n\rightarrow 0$, we can set 
    $$
    u_n(x):=\frac{1}{\eps_n} v_n \left( \frac{x}{\eps_n}\right) \qquad \text{on } \R^2.
    $$
    Clearly $\spt u_n\subset B_{\eps_n}(0)$ for all $n$. Moreover, $\|u_n\|_{L^2}=\|v_n\|_{L^2}\rightarrow 0$ while $\|\omega_n\|_{L^1}=\|\curl v_n\|_{L^1}=1$ for all $n$. Thus, we may assume $|\omega_n|\overset{*}{\rightharpoonup} \Omega$ in $\mathcal M (B_1)$. Let $\delta\in(0,1)$ be arbitrary. Since $\spt \omega_n\subset B_{\eps_n}(0)$, by the upper semi-continuity of weak* convergence of measures on compact sets we have 
    $$
    \Omega\left(\overline B_\delta(0)\right)\geq \limsup_{n\rightarrow \infty} \int_{B_\delta(0)}|\omega_n |=1.
    $$
    By letting $\delta \rightarrow 0$ we deduce $\Omega(\{0\})=1$.  We are left to prove \eqref{claim one}. Pick any $v\in L^2(B_1)$ with $\spt v\subset B_{1}$, $\div v=0$ and $\curl v\not \in \mathcal M(B_1)$. By mollifying it, we obtain a sequence $\{\tilde v_n\}_n\subset C^\infty_c (B_1)$ such that $\|\tilde v_n\|_{L^2}\leq \|v\|_{L^2}$ and $\|\curl \tilde v_n\|_{L^1}\rightarrow \infty$. Then, the sequence of vector fields 
$$
v_n:=\frac{\tilde v_n}{\|\curl \tilde v_n\|_{L^1}}
$$
has the desired properties. 

    \underline{\textsc{Proof of $(ii)$}}.  We claim that $\exists \{v_n\}_n\subset C^\infty_c(B_1)$ with $\div v_n=0$ for all $n$, such that 
    \begin{equation}\label{interesting claim}
    \|v_n\|_{L^1}+ \|\curl v_n\|_{L^1}\rightarrow 0 \qquad \text{ and } \qquad \|v_n\|_{L^2}=1 \quad \forall n.
    \end{equation}

    As in the above proof, the conclusion then follows by the same rescaling argument. Details are left to the reader. Let us prove \eqref{interesting claim}. Take any compactly supported measure $\tilde \mu$ with an atom at the origin and, uniquely among functions decaying at infinity, solve 
    \begin{equation}\label{Dirac in Laplacian}
    \Delta \tilde \psi = \tilde \mu \qquad \text{on }   \R^2.
\end{equation}
    Fix a cut-off $\chi\in C^\infty_c (B_1)$ such that $\chi\equiv 1$ in a neighbourhood of the origin. Then $\psi:= \chi\tilde \psi$ is compactly supported in $B_1$. The classical Calder\'on--Zygmund theory implies $\tilde \psi \in W^{1,1}(B_1)$. Thus 
    $$
    \Delta \psi=\chi \tilde \mu + 2 \nabla \chi \cdot \nabla \tilde \psi + \tilde \psi \Delta \chi
    $$
    is a compactly supported measure with an atom at the origin. In particular, see for instance \cite{delort1991existence}*{Lemma 1.2.5},  we deduce $\nabla \psi \not \in L^2(B_1)$.
    By mollifying it, we obtain a sequence $\{ \psi_n\}_n\subset C^\infty_c(B_1)$ such that $\| \nabla \psi_n\|_{L^2}\rightarrow \infty$, $\sup_n \| \Delta \psi_n\|_{L^1}<\infty$ and $\sup_n\| \nabla \psi_n\|_{L^1}\leq \| \nabla \psi\|_{L^1}<\infty$. Then, the sequence 
    $$
    v_n:=\frac{\nabla^\perp \psi_n}{\|\nabla^\perp \psi_n\|_{L^2}}
    $$
    has all the desired properties.
\end{proof}

We now turn to the relation between atomic concentrations in the vorticity and strong compactness of the velocity. Consider a sequence $\{u_n\}_n$ of incompressible vector fields. By the Lions concentration compactness principle \cite{Struwe}*{Section 4.8}, whenever $\{\nabla u_n\}_n \subset L^1(\T^2)$ is bounded, $\Lambda$ is a purely atomic measure which can display an atom at a point only if that point is an atom appearing in the weak* limit of $\{|\nabla u_n|\}_n$ in $\mathcal M(\T^2)$. This is due to the Sobolev embedding $W^{1,1}(\T^2)\subset L^2(\T^2)$. However, the failure of the Calder\'on--Zygmund estimate at the endpoint allows the vorticity to be $L^1(\T^2)$  without the corresponding velocity being necessarily $L^2(\T^2)$. In particular, as we shall show in the next proposition, the defect measure might diffuse, thus failing to be lower dimensional, even if the vorticity stays bounded in $L^1(\T^2)$.
\begin{proposition}
    \label{P: no weak lions}
   Let $Q=(-1,1)^2\subset \R^2$. There exists a sequence $\{u_n\}_n\subset C^\infty_c (Q)$ of incompressible vector fields such that, denoting by $\omega_n:=\curl u_n$, it holds 
$$
\|u_n\|_{L^1}\rightarrow 0, \qquad \|\omega_n\|_{L^1}=1\quad \forall n \qquad \text{and}\qquad |u_n|^2\overset{*} {\rightharpoonup} \frac14 \mathcal L^2 \quad \text{in } \mathcal M(Q),
$$
where $\mathcal L^2$ denotes the two-dimensional Lebesgue measure. Being compactly supported, the construction works on $\T^2$ as well.
\end{proposition}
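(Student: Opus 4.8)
The mechanism is that a point vortex has logarithmically divergent energy: a unit point vortex mollified at scale $\rho$ carries kinetic energy $\sim\frac{1}{2\pi}\log\frac{\ell}{\rho}$ on a ball of radius $\ell$, while its velocity and vorticity have $L^1$ norms of order $\ell$ and $1$ there. This logarithm is precisely what reconciles the three requirements, which no fixed‑shape compactly supported building block can: for a scaled bump $v$ the ratio $\|\curl v\|_{L^1}/\|v\|_{L^2}^2$ is scaling‑rigid, so one cannot keep $\|\omega_n\|_{L^1}$ and $\|u_n\|_{L^2}^2$ of order $1$ while $\|u_n\|_{L^1}\to0$. Hence the construction is forced to use many mollified point vortices; since the Biot--Savart velocity is non‑local one must afterwards localise the stream function, and the signs must alternate so that cancellation keeps $\|u_n\|_{L^1}$ small and makes that localisation harmless.

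\emph{Construction on $\R^2$.} Fix $\eta\in C^\infty_c(B_1)$ radial with $\int\eta=1$, set $\eta_\rho:=\rho^{-2}\eta(\cdot/\rho)$, and let $K(x):=\frac{x^\perp}{2\pi|x|^2}$ be the Biot--Savart kernel. Choose $h_n\to0$ and $\varepsilon_n\to0$ with $h_n/\varepsilon_n\to0$ (say $\varepsilon_n:=\sqrt{h_n}$), let $\{x_j\}_j$ be the points of the shifted lattice $h_n(\mathbb Z^2+(\tfrac12,\tfrac12))$ lying in $(1-2\varepsilon_n)Q$, with checkerboard signs $\sigma_j=(-1)^{k+l}$ (centred so that $\omega_n^0$ is even, killing its zeroth and first moments), and $M_n:=\#\{x_j\}\sim 4h_n^{-2}$. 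Put
\[
\omega_n^0:=\sum_j\frac{\sigma_j}{M_n}\,\eta_{\rho_n}(\cdot-x_j),\qquad u_n^0:=K*\omega_n^0=\nabla^\perp\psi_n^0,\quad \psi_n^0:=\big(\tfrac1{2\pi}\log|\cdot|\big)*\omega_n^0 .
\]
Since the balls $B_{\rho_n}(x_j)$ are disjoint ($\rho_n\ll h_n$), $\|\omega_n^0\|_{L^1}=1$. Pin down $\rho_n$ by requiring $\sum_j\int_{C_j}|u_n^0|^2=1$, where $C_j:=x_j+[-h_n/2,h_n/2)^2$; this left side equals $\frac{1}{2\pi M_n}\log\frac{h_n}{2\rho_n}$ up to lower‑order cross terms, is continuous in $\rho_n$ with range containing $1$, so such a $\rho_n$ exists (and $\rho_n=e^{-2\pi M_n(1+o(1))}$).

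\emph{Verification and localisation.} For $u_n^0$: (i) on each $C_j$ write $u_n^0=\frac{\sigma_j}{M_n}K_{\rho_n}(\cdot-x_j)+R_j$; the first term has $\int_{C_j}\lesssim M_n^{-1}\int_{B_{h_n}}|K|\lesssim M_n^{-1}h_n$, so $\sum_j\lesssim h_n$, while $R_j=O(h_n)$ on $C_j$ — the $O(1)$ nearest vortices give $\lesssim M_n^{-1}h_n^{-1}\sim h_n$ and the rest cancel in $2\times2$ blocks (a second‑difference estimate, $|\nabla^2K|\sim|x|^{-3}$) giving again $\lesssim h_n$; off the cloud $|u_n^0|$ is small for the same reason. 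Hence $\|u_n^0\|_{L^1(Q)}\to0$. (ii) $\|\omega_n^0\|_{L^1}=1$. (iii) By the grid's translation symmetry $\int_{C_j}|u_n^0|^2=M_n^{-1}(1+o(1))$ for deep‑interior cells, so for $\varphi\in C_c(Q)$ (supported in $(1-2\varepsilon_n)Q$ for $n$ large) a Riemann‑sum argument using $|C_j|=h_n^2=4M_n^{-1}(1+o(1))$ and $\mathrm{osc}_{h_n}\varphi\to0$ gives $\int|u_n^0|^2\varphi=\frac1{M_n}\sum_j\varphi(x_j)+o(1)\to\frac14\int_Q\varphi$, i.e. $|u_n^0|^2\overset{*}{\rightharpoonup}\frac14\mathcal L^2$ in $\mathcal M(Q)$. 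Now take $\zeta_n\in C^\infty_c(Q)$ with $\zeta_n\equiv1$ on $(1-\varepsilon_n)Q$, $0\le\zeta_n\le1$, $|\nabla^k\zeta_n|\lesssim\varepsilon_n^{-k}$, and set $u_n:=\lambda_n\nabla^\perp(\zeta_n\psi_n^0)=\lambda_n(\zeta_n u_n^0+\psi_n^0\nabla^\perp\zeta_n)\in C^\infty_c(Q)$, with $\lambda_n\to1$ chosen so $\|\curl u_n\|_{L^1}=1$. Then $\div u_n=0$ and $\curl u_n=\lambda_n(\zeta_n\omega_n^0+2\nabla\zeta_n\cdot\nabla\psi_n^0+\psi_n^0\Delta\zeta_n)$; since $\supp\omega_n^0\subset\{\zeta_n\equiv1\}$, the first term is $\omega_n^0$ and the correction lives in $Q\setminus(1-\varepsilon_n)Q$, at distance $\ge\varepsilon_n\gg h_n$ from every vortex, where the alternating‑sign cancellation yields $|\psi_n^0|\lesssim M_n^{-1}\log\frac1{\varepsilon_n}$ and $|\nabla\psi_n^0|\lesssim(M_n\varepsilon_n)^{-1}$, so the correction and $u_n-\lambda_n u_n^0$ tend to $0$ in $L^1$ and $L^2$. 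Hence $\|u_n\|_{L^1}\to0$, $\|\curl u_n\|_{L^1}=1$, and $|u_n|^2\overset{*}{\rightharpoonup}\frac14\mathcal L^2$ in $\mathcal M(Q)$ (using $\big\||u_n|^2-\lambda_n^2|u_n^0|^2\big\|_{L^1}\le\|u_n-\lambda_n u_n^0\|_{L^2}(\|u_n\|_{L^2}+\lambda_n\|u_n^0\|_{L^2})\to0$); being compactly supported, $u_n$ descends to $\T^2$.

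\emph{Main obstacle.} The crux is the cancellation produced by the alternating signs: without it the monopole contribution alone keeps $\|u_n\|_{L^1}$ of order $1$, and it is this same cancellation that forces $\psi_n^0$ and $\nabla\psi_n^0$ to be negligible near $\partial Q$, so that the cutoff above does not spoil any of the three properties. By contrast the energy normalisation is soft (intermediate value theorem in $\rho_n$), and the equidistribution is a routine Riemann sum once the per‑cell energy $\sim M_n^{-1}$ has been identified.
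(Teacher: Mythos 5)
Your construction is sound in outline and rests on the same underlying mechanism as the paper's --- the logarithmic divergence of the energy of a mollified point vortex, i.e.\ the endpoint failure of Calder\'on--Zygmund --- but it takes a genuinely different and substantially heavier route. The paper does \emph{not} use the global Biot--Savart field of a signed vortex cloud: it reuses the sequence $\{v_n\}_n\subset C^\infty_c(B_1)$ already built for \cref{P: atoms in lambda vs omega}$(ii)$ (a compactly supported, divergence-free, $L^2$-normalized mollified point vortex satisfying $\|v_n\|_{L^1}+\|\curl v_n\|_{L^1}\to0$), sets $\eps_n:=\|\curl v_n\|_{L^1}$, and tiles $Q$ with $\eps_n^{-2}$ \emph{disjointly supported} copies $v_n((\cdot-x_{i,n})/\eps_n)$. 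All three properties then follow from one-line scaling identities and a trivial Riemann sum. This shows that the dichotomy in your ``Plan'' is false: a compactly supported building block does work, provided its \emph{shape varies with $n$} (your scaling-rigidity argument only rules out a fixed profile); consequently none of the machinery you are ``forced'' into --- alternating signs, multipole cancellation of the nonlocal velocity, a global cutoff of the stream function, cross-term and boundary-layer estimates --- is actually needed. What your approach buys is a more physical picture (a neutral vortex lattice), but at the cost of several estimates that you assert rather than prove: the second-difference/quadrupole bound on $R_j$, the treatment of incomplete $2\times2$ blocks at the edge of the cloud (where the claimed cancellation degrades to dipole or monopole lines), and the uniformity in $j$ of $\int_{C_j}|u_n^0|^2=M_n^{-1}(1+o(1))$, which does not follow from ``translation symmetry'' of a finite grid but from the cross-term estimate $\|R_j\|_{L^\infty(C_j)}\lesssim h_n$. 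These can all be carried out and your choices of parameters ($\eps_n=\sqrt{h_n}$, $\rho_n\sim h_ne^{-2\pi M_n}$) are consistent, so I regard the proposal as essentially correct, just far less economical than the paper's argument and incomplete in the stated multipole estimates.
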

The failure of the lower dimensionality of $\Lambda$ under measure vorticity assumption was already considered  by DiPerna--Majda \cite{diperna1987concentrations}*{pp. 323-325}. This issue lead them to introduce a \quotes{reduced} version of $\Lambda$ in \cite{DM88}, which is lower dimensional in an appropriate sense and allows for concentration-cancellation phenomena (see also \cites{Evans_notes,Lopes97}). As opposite to the DiPerna--Majda constructions, the proof of Proposition \ref{P: no weak lions}  follows by the endpoint failure of Calder\'on--Zygmund only. In particular, the same construction applies to any dimension and any relation between $u$ and $\omega$ with suitable minor modifications, perhaps providing a more robust mechanism. The recent paper \cites{DS24} is also related to this discussion.

\begin{proof}
    Let $\{v_n\}_n \subset C^\infty_c(B_1)$ be the sequence from \eqref{interesting claim}. Denote by $\eps_n:=\|\curl v_n\|_{L^1}$, which vanishes as $n\rightarrow \infty$. For any $n$ divide\footnote{To be precise we should take the integer part of $\eps_n^{-2}$.} $Q$ in $\eps_n^{-2}$ open squares of size $2\eps_n$ and denote by $\{x_{i,n}\}_{i=1}^{\eps_n^{-2}}$ their barycenters. Clearly 
    $$
    x\mapsto v_n\left( \frac{x-x_{i,n}}{\eps_n}\right)
    $$
    is smooth and compactly supported into the $i-$th square. Thus
    $$
    u_n(x):= \sum_{i=1}^{\eps_n^{-2}} v_n\left( \frac{x-x_{i,n}}{\eps_n}\right)
    $$
    defines a sequence of incompressible vector fields  $\{ u_n\}_n\subset C^\infty_c(Q)$. By \eqref{interesting claim} we have 
    $$
    \|u_n\|_{L^1}=\sum_{i=1}^{\eps_n^{-2}} \norm{ v_n\left( \frac{\cdot -x_{i,n}}{\eps_n}\right)}_{L^1}= \|v_n\|_{L^1}\rightarrow 0
    $$
    and 
    $$
    \|\omega_n\|_{L^1}=\frac{1}{\eps_n}\sum_{i=1}^{\eps_n^{-2}} \norm{ (\curl v_n)\left( \frac{\cdot -x_{i,n}}{\eps_n}\right)}_{L^1}= \frac{\|\curl v_n\|_{L^1}}{\eps_n}=1 \qquad \forall n.
    $$
 We are left to prove $ 4|u_n|^2\overset{*} {\rightharpoonup}\mathcal L^2$ in $\mathcal M(Q)$. For any $\varphi \in C^1(\overline Q)$ we split
    \begin{align}
        \int_{Q} |u_n|^2\varphi &=\eps_n^2 \sum_{i=1}^{\eps_n^{-2}}\int_{B_1} |v_n(x)|^2 \varphi (x_{i,n} + \eps_n x)\,dx\\
        &= \eps_n^2 \sum_{i=1}^{\eps_n^{-2}}\int_{B_1} |v_n(x)|^2 \big( \varphi (x_{i,n} + \eps_n x) - \varphi(x_{i,n})\big)\,dx\\
        &+\eps_n^2 \sum_{i=1}^{\eps_n^{-2}}\varphi(x_{i,n}).
    \end{align}
    The first term vanishes since it is bounded by 
    $$
   \eps_n^3  \|\nabla \varphi\|_{L^\infty} \sum_{i=1}^{\eps_n^{-2}}\int_{B_1} |v_n(x)|^2\,dx= \eps_n\|\nabla \varphi\|_{L^\infty} \rightarrow 0,
    $$
    while the second term converges to $ \frac14 \int_{Q} \varphi$.
\end{proof}

\begin{remark}
    By a direct computation it can be proved that the sequence constructed in Proposition \ref{P: no weak lions} satisfies $4|\omega_n|\overset{*}{\rightharpoonup}\mathcal{L}^2$ in $\mathcal M(Q)$.
\end{remark}

\begin{remark}
   The sequence defined in \eqref{interesting claim} could have been chosen to be radially symmetric. In this case, the sequence constructed in Proposition \ref{P: no weak lions} consists of steady solutions to the incompressible Euler equations. In particular, avoiding any explicit construction, our approach reveals the more general mechanism behind \cite{GT88}. We also note that, although the strong compactness fails, concentration compactness  occurs \cites{scho95,delort1991existence} and the weak limit is a weak solution to the stationary Euler equations.
\end{remark}

\bibliographystyle{plain} 
\bibliography{biblio}

\end{document}